\documentclass{amsart}
\usepackage[normalem]{ulem}
\usepackage{
  amssymb,
  epsfig,
  mathrsfs,
  mathpazo,
  stackengine,
  scalerel,
  url,
  bbm,
  esint
}
\usepackage{thmtools}

\makeatletter
\@ifundefined{newcounteralias}{}{%
  \renewcommand\thmt@autorefsetup{%
    \@xa\def
      \csname\thmt@envname autorefname\@xa\endcsname
      \@xa{\thmt@thmname}%
  }%
}
\makeatother
\usepackage{xcolor}
\usepackage{stmaryrd}
\usepackage[multiple]{footmisc}
\usepackage{accents} 
\usepackage{mathtools} 
\usepackage{derivative}

\DeclareFontFamily{U}{mathx}{}
\DeclareFontShape{U}{mathx}{m}{n}{<-> mathx10}{}
\DeclareSymbolFont{mathx}{U}{mathx}{m}{n}
\DeclareMathAccent{\widehat}{0}{mathx}{"70}
\DeclareMathAccent{\widecheck}{0}{mathx}{"71}

\makeatletter
\newcommand*\bigcdot{\mathpalette\bigcdot@{.5}}
\newcommand*\bigcdot@[2]{\mathbin{\vcenter{\hbox{\scalebox{#2}{$\m@th#1\bullet$}}}}}
\DeclareRobustCommand{\Cpp}{C\nolinebreak[4]\hspace{-.05em}\raisebox{.15ex}{\texttt{++}}}
\makeatother

\newcommand{\osc}{{\rm osc}}

\usepackage{arydshln, booktabs, makecell, pbox, tabularx}

\usepackage{graphicx, caption, subcaption}

\newtheorem{theorem}{Theorem}[section]
\newtheorem{lemma}[theorem]{Lemma}
\newtheorem{proposition}[theorem]{Proposition}

\declaretheorem[style=definition,qed=$\vartriangle$,sibling=theorem]{example}
\declaretheorem[style=remark,qed=$\vartriangle$,sibling=theorem]{remark}
\numberwithin{equation}{section}

\newcommand{\R}{\mathbb R}

\newcommand{\N}{\mathbb N}

\newcommand{\cP}{\mathcal P}
\newcommand{\cE}{\mathcal E}

\newcommand{\cL}{\mathcal L}

\newcommand{\Lis}{\cL\mathrm{is}}

\newcommand{\identity}{\mathrm{Id}}

\DeclareMathOperator{\ran}{ran}
\DeclareMathOperator{\supp}{supp}

\DeclareMathOperator{\diam}{diam}
\DeclareMathOperator*{\argmin}{argmin}
\DeclareMathOperator{\dist}{dist}

\newcommand*\diff{\mathop{}\!\mathrm{d}}

\DeclareMathOperator{\divv}{div}

\renewcommand{\mod}{\,\mathrm{mod}\,}

\newcommand{\private}[1]{}

\newcommand{\be}{\begin{equation}}
\newcommand{\ee}{\end{equation}}

\newcommand{\bbP}{\mathbb{P}}
\newcommand{\tria}{{\mathcal T}}
\newcommand{\pria}{{\mathcal P}}
\newcommand{\RT}{\mathit{RT}}

\newcommand{\uumlaut}{{\"u}}

\newcommand{\gen}{{\tt gen}}

\DeclareMathOperator*{\fiint}{\ensuremath{\iint\text{\kern-1.36em{\raisebox{5.87pt}{\rotatebox{-93}{$\setminus$}}}}}}

\newenvironment{algotab}%
{\par\begin{samepage}%
\begin{tabbing}\ttfamily%
 \hspace*{5mm}\=\hspace{3ex}\=\hspace{3ex}\=\hspace{3ex}\=\hspace{3ex}%
\=\hspace{3ex}\=\hspace{3ex}\=\hspace{3ex}\=\hspace{3ex}\kill}%
{\end{tabbing}\end{samepage}}

\newcounter{ccondition}

{%
\renewcommand{\theequation}{\temp}%
\addtocounter{equation}{-1}%
\par\noindent%
\ignorespacesafterend%
}

\newcounter{mylistcounter}
\renewcommand{\themylistcounter}{(\roman{mylistcounter})}

\makeatletter
\@namedef{subjclassname@2020}{%
  \textup{2020} Mathematics Subject Classification}
\makeatother

\title{A double-adaptivity solver for parabolic PDEs}
\date{\today}

\author{Gregor Gantner}
\address{Department of Applied Mathematics, University of Twente, P.O. Box 217, 7500 AE Enschede, The Netherlands}
\email{gregor.gantner@utwente.nl}
\author{Robin Smeets}
\address{Korteweg-de Vries (KdV) Institute for Mathematics, University of Amsterdam, P.O. Box 94248, 1090 GE Amsterdam, The Netherlands.}
\email{r.k.h.smeets@uva.nl}
\author{Rob Stevenson}
\address{Korteweg-de Vries (KdV) Institute for Mathematics, University of Amsterdam, P.O. Box 94248, 1090 GE Amsterdam, The Netherlands.}
\email{rob.p.stevenson@gmail.com}

\thanks{GG acknowledges funding by the Deutsche Forschungsgemeinschaft (DFG, German Research Foundation) under Germany’s Excellence Strategy - EXC-2047/1 - 390685813 and by the Nederlandse Organisatie voor Wetenschappelijk Onderzoek (NWO, Dutch Research Council) through the Vidi project Optimal adaptive space-time boundary and finite element methods (OASTMethods) with file number VI.Vidi.243.148 under the grant https://doi.org/10.61686/LTBER75172.}

\subjclass[2020]{
35A15, 
35B35, 
35K90, 
65M12, 
65M15, 
65M60
}

\keywords{parabolic PDEs, space-time variational formulation, minimal residual method, inf-sup stability, a posteriori error estimators, equilibrated fluxes, adaptive stabilization, double adaptivity loop}

\begin{document}

\begin{abstract} We study minimal residual space-time finite element discretizations of linear parabolic initial value problems in canonical space-time variational form.
To deal with the arising dual norm, we introduce the Riesz lift of the residual as an additional variable.
Quasi-optimality of the primal variable of the mixed system follows from a uniform inf-sup condition.
This condition is known to be satisfied for finite element spaces w.r.t.~prismatic partitions of the space-time cylinder that allow for a decomposition into time-slabs.
We prove that this condition cannot be expected to hold otherwise.
To recover stability for general partitions and the data at hand, we derive an a posteriori condition on the error between the exact Riesz lift of the residual and its Galerkin approximation --- being the secondary variable of our system --- under which 
the primal variable is quasi-optimal.
We derive a posteriori error estimators for both variables, and use them in a double-adaptive loop that alternates test-space with trial-space enrichment. 
We illustrate our findings with numerical experiments in \(1+1\) and \(2+1\) dimensions.
\end{abstract}
\maketitle

\section{Introduction}
This paper is about the adaptive numerical solution of linear parabolic evolution equations in a simultaneous space-time variational formulation.
The monolithic approach based on such a variational formulation has shown to be advantageous for parallel implementations
\cite{169.06,306.65,169.065}, it guarantees quasi-best approximations from the employed trial spaces, and it gives the possibility of adaptive local refinements 
 simultaneously in time and space. At the downside, monolithic methods require more memory than time-stepping methods. This disadvantage vanishes for applications where the forward parabolic problem is coupled with an adjoint backward parabolic problem as with
problems of optimal control \cite{169.053, 19.96, 75.292,75.2573}, or with goal-oriented error minimization \cite{169.052}. Another interesting application of space-time methods is in reduced basis methods for parameter-dependent problems where, other than with time-stepping methods, they reduce complexity both in space and time \cite{75.528, 75.292,138.24}.

In this work, we consider the parabolic initial value problem as a boundedly invertible operator w.r.t.~the canonical domain $X=L_2(\Xi;V) \cap H^1(\Xi;V')$
and codomain $Y' \times H$, where $\Xi=(0,T)$, $Y=L_2(\Xi;V)$, and $V \hookrightarrow H \hookrightarrow V'$ is the Gelfand triple associated to the spatial partial differential operator.
Following \cite{11,249.99,249.992}, given finite-dimensional trial and test subspaces $X^\delta \subset X$ and $Y^\delta \subset Y$, we minimize the residual over $X^\delta$, 
where the norm on $Y'$ is replaced by the discretized dual norm $\sup_{0 \neq v \in Y^\delta}\frac{\bigcdot(v)}{\|v\|_Y}$. By introducing the Riesz lift of the corresponding residual component as an additional variable, one arrives at an equivalent saddle-point system. 

The resulting numerical approximation is quasi-best, uniformly in  all data, if and only if the saddle-point system is uniformly inf-sup stable. For  $X^\delta$ and $Y^\delta$ that are tensor products of suitable temporal and spatial finite element spaces, uniform inf-sup stability was shown in \cite{11}. In \cite{249.99}, this result has been generalized to the time-slab setting in which on different slabs possibly different spatial finite element spaces are employed.
Also this last setting, however, does not allow for general local refinements of the space-time mesh, being an important motivation for the use of monolithic space-time methods instead of classical time-marching methods.

Unfortunately, as will be demonstrated in this work, for finite element partitions that do not permit a partition into time-slabs, uniform inf-sup stability \emph{cannot} be expected. 
In view of this result, following an idea from \cite{45.44}, we derive an \emph{a posteriori condition} on the size of the test space for quasi-optimality for the \emph{data at hand}. 
Furthermore, we derive an a posteriori estimator for the error in the principal variable in the $X$-norm that is efficient and reliable when the test space satisfies the aforementioned condition for quasi-optimality.
The combination of both these results suggests a \emph{double-adaptivity} loop. In the inner loop, the test space is expanded until it satisfies the condition for quasi-optimality, whereas in the outer loop the trial space is expanded guided by the a posteriori estimator.

The condition on the test space for quasi-optimality involves the difference in the $Y$-norm between the exact Riesz lift of the residual of the primal variable and its Galerkin approximation from $Y^\delta$. Using the technique of equilibrated fluxes \cite{70.8}, we develop an estimator for this error that, modulo higher order oscillation terms, is efficient and reliable.
The construction of this estimator requires an auxiliary refined partition, whose cardinality cannot be bounded by a constant multiple of the cardinality of the original partition. This has the undesirable consequence that the computation of the estimator can in general not be performed in linear complexity. We therefore also investigate a simpler, non-certified surrogate in which the exact Riesz lift is approximated in the one-degree enriched space w.r.t.~the original partition. The resulting difference in the $Y$-norm is not a guaranteed upper bound, but the numerical experiments indicate that it remains of the same order as the certified estimator.

The numerical experiments confirm the role of the two adaptive mechanisms. For singular solutions caused by non-matching initial and boundary data, or by reduced regularity of the initial datum, the adaptive algorithm refines towards the relevant parts of the boundary. In $1+1$ dimensions this leads, for the non-matching initial datum, to the rate $0.5$ for all tested polynomial degrees, whereas uniform refinement gives only a rate of about $0.12$. For the reduced boundary-regularity datum, the adaptive rates increase with the polynomial degree and reach approximately $0.88$ and $0.93$ for $p=2$ and $p=3$, respectively. In $2+1$ dimensions the gains are more modest but remain systematic. The experiments also show that the higher-order surrogate for the condition on the test space tracks the certified estimator within the same order of magnitude.

There are other pairs of spaces w.r.t.~which the parabolic initial value problem is well-posed.
A competitor to our approach is the first-order system formulation 
introduced in \cite{75.257}, further studied in \cite{75.28,75.291}, where the codomain is an $L_2$-type space so that it allows for a straightforward residual minimization over any finite-dimensional subspace. Compared to the norm on $L_2(0,T;V) \cap H^{1}(0,T;V')$, however, the norm on the domain of the first-order operator is stronger, which, for non-tensor product trial spaces (cf.~\cite{75.291,75.2592}), can affect the rates that can be achieved for singular solutions. For example, for the non-matching initial datum in $1+1$ dimensions, our adaptive $p=1$ computation attains the rate $0.5$, i.e.~the rate expected for smooth solutions in the $X$-norm, whereas \cite{75.291} reports a rate $0.43$.

A further advantage of our approach is that it extends to parabolic problems with a non-linear spatial operator that is Lipschitz continuous and strongly monotone. This will be the topic of a forthcoming work that builds on the combination of \cite{19.98} and the current work. 

\subsection{Organization}
In Sect.~\ref{sec:ivp}, the parabolic initial value problem is written in a well-posed variational form. The minimal residual discretization is presented in Sect.~\ref{sec:minres}, where it is shown that its solution is quasi-best under a uniform inf-sup condition that requires that the test space is sufficiently large in relation to the trial space.
In Sect.~\ref{sec:inf-sup}, it is recalled that for suitable finite element trial and test spaces whose underlying partitions permit a subdivision into time-slabs, the uniform inf-sup condition is valid. In Appendix~\ref{sec:necessity}, it is shown that, without the time-slab condition, uniform inf-sup stability cannot be expected.
In Sect.~\ref{sec:data-dependent}, an a posteriori data-dependent condition for quasi-optimality is derived.
It involves the error between the exact Riesz lift of the residual and its Galerkin approximation from the test space. An a posteriori estimator for this error is derived in Sect.~\ref{sec:apost}. A double-adaptivity loop in which alternately the trial and test spaces are expanded is presented in Sect.~\ref{sec:double-adaptive}.
In Sect.~\ref{sec:numerics}, numerical experiments are presented, and conclusions can be found in Sect.~\ref{sec:conclusion}.

\subsection{Notations}  \label{sec:notations}
In this work, by $C \lesssim D$ we will mean that $C$ can be bounded by a multiple of $D$, independently of parameters that $C$ and $D$ may depend on.
Obviously, $C \gtrsim D$ is defined as $D \lesssim C$, and $C\eqsim D$ as $C\lesssim D$ and $C \gtrsim D$. 

We set $\N:=\{1,2,\ldots\}$ and $\N_0:=\{0\} \cup \N$.

For normed linear spaces $E$ and $F$, by $\cL(E,F)$ we will denote the normed linear space of bounded linear mappings $E \rightarrow F$,
and by $\Lis(E,F)$ its subset of boundedly invertible linear mappings $E \rightarrow F$.
We write $E \hookrightarrow F$ to denote that $E$ is continuously embedded in $F$.
For simplicity only, we exclusively consider linear spaces over the scalar field $\R$.
For a Hilbert space $K$, $R_K\in \Lis(K,K')$ will denote the Riesz isometry defined by $(R_K v)(w)=\langle w,v\rangle_K$.

\section{Parabolic initial value problem} \label{sec:ivp}
For a Gelfand triple $V \hookrightarrow H \simeq H' \hookrightarrow V'$, and a $\mathrm{T}>0$, with $\Xi:=(0,\mathrm{T})$ we consider the \emph{Initial Value Problem} (IVP) of finding $u\colon \Xi \rightarrow V$ such that
$$
\left\{
\begin{array}{rcll}
\tfrac{\diff u}{\diff t} (t)+A(t)u(t) & = & \ell(t) & \text{a.e. }t \in \Xi  , \\
u(0) & = & u_0,
\end{array}
\right.
$$
where $\ell \in L_2(\Xi;V')$, $A(t) \in \cL(V,V')$ bounded uniformly in a.e.~$t \in \Xi$, and $u_0 \in H$.

We consider this problem in \emph{operator}, or \emph{space-time variational form}: We set
$$
Y:=L_2(\Xi;V), \quad X:= Y \cap H^1(\Xi;V'), 
$$
and $A\in \cL(Y,Y')$, $\diff_t \in \cL(X,Y')$ by
$$
 (Aw)(v):=\int_\Xi (A(t)w(t))(v(t))\diff t,\quad (\diff_t w)(v):=\int_\Xi (\tfrac{\diff w}{\diff t} (t))(v(t)) \diff t.
 $$
It is known (e.g., \cite[Ch.~1, Thm.~3.1]{185}) that
\be \label{eq:embedding}
X \hookrightarrow C(\overline{\Xi};H),
\ee
so that the trace operator $\gamma_t\colon X \rightarrow H\colon u \mapsto u(t)$ is bounded uniformly in $t \in \overline{\Xi}$.
For $w, z \in L_2(\Xi;V) \cap H^1(\Xi;H)$,
$$
(\diff_t w)(z)+(\diff_t z)(w)=\int_\Xi \tfrac{\diff}{\diff t} \langle w(t),z(t)\rangle_H \diff t=\langle w(\mathrm{T}),z(\mathrm{T})\rangle_H-\langle w(0),z(0)\rangle_H,
$$
so that by continuous extension using the density of $L_2(\Xi;V) \cap H^1(\Xi;H)$ in $X$,
$$
\diff_t+\diff_t'+\gamma_0' \gamma_0=\gamma_{\mathrm{T}}' \gamma_{\mathrm{T}}
$$
as mappings $X \rightarrow X'$.

Given data $(\ell,u_0) \in Y' \times H$, our IVP now reads as finding $u \in X$ such that
$$
\left\{
\begin{array}{rcll}
(Bu)(v):=(\diff_t u)(v)+(Au)(v)&=&\ell(v) & (v \in Y),\\
\gamma_0 u &=& u_0,
\end{array}
\right.
$$
i.e., as
\be \label{eq:33}
B_e u:=\left[\begin{array}{@{}c@{}} B \\ \gamma_0 \end{array}\right] u=\left[\begin{array}{@{}c@{}} \ell \\ u_0\end{array}\right]=:f.
\ee
In order to ensure existence and uniqueness of the solution of this problem, we assume that $A(t) \in \cL(V,V')$ is
 \emph{uniformly coercive} i.e.,
$$
(A(t) v)(v) \gtrsim \|v\|_{V}^2\quad(v \in V,\, \text{a.e. } t \in \Xi).
$$
Under this condition\footnote{Actually, it already suffices that $A(t)$ satisfies a uniform G{\aa}rding inequality.} the following result is well-known (e.g.,~\cite[Ch.~3, Thm.~4.1]{185},
\cite[Ch.~IV, \S26]{314.9}).
\begin{theorem} It holds that  $B_e \in \Lis(X,Y' \times H)$.
\end{theorem}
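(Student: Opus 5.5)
We prove three things in turn: boundedness of $B_e$, a lower bound $\|B_e w\|_{Y'\times H}\gtrsim \|w\|_X$ (which gives injectivity and closed range), and surjectivity.

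\emph{Boundedness.} This follows directly from the definitions. We have $\|\diff_t w\|_{Y'}=\|\tfrac{\diff w}{\diff t}\|_{L_2(\Xi;V')}\le\|w\|_X$. Since $A(t)$ is uniformly bounded, $\|Aw\|_{Y'}\lesssim\|w\|_Y\le\|w\|_X$. Finally, $\|\gamma_0 w\|_H\lesssim\|w\|_X$ by the embedding \eqref{eq:embedding}.

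\emph{Lower bound.} Given $w\in X$, set $\ell:=Bw$ and $u_0:=\gamma_0 w$.

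First test with $v=w$. The identity $\diff_t+\diff_t'+\gamma_0'\gamma_0=\gamma_{\mathrm T}'\gamma_{\mathrm T}$ gives $2(\diff_t w)(w)=\|w(\mathrm T)\|_H^2-\|w(0)\|_H^2$. Combined with coercivity of $A(t)$, this yields
$$\|w\|_Y^2\lesssim (Aw)(w)\le \ell(w)+\tfrac12\|u_0\|_H^2.$$
Hence $\|w\|_Y\lesssim \|\ell\|_{Y'}+\|u_0\|_H$.

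Next, control the time derivative. Since $\tfrac{\diff w}{\diff t}=\ell-Aw$ in $Y'=L_2(\Xi;V')$, we get $\|\tfrac{\diff w}{\diff t}\|_{L_2(\Xi;V')}\le\|\ell\|_{Y'}+\|A\|\,\|w\|_Y$. Together these give $\|w\|_X\lesssim\|B_e w\|_{Y'\times H}$.

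\emph{Surjectivity.} This is the main obstacle. Given $(\ell,u_0)$, we must produce a solution $u\in X$. I would first try a Galerkin construction in space:
\begin{itemize}
\item take an increasing sequence of finite-dimensional subspaces $V_n\subset V$ whose union is dense;
\item solve the resulting linear ODE systems in $H^1(\Xi;V_n)$ with initial values $P_n u_0\to u_0$ in $H$ (Carathéodory solutions exist since the coefficients $t\mapsto A(t)$ are only bounded measurable);
\item derive the a priori bound from the energy argument above, uniform in $n$, which gives boundedness in $Y$ and in $C(\overline\Xi;H)$;
\item extract a subsequence converging weakly in $Y$;
\item identify the limit $u$ as satisfying $\diff_t u+Au=\ell$ in $Y'$ and $\gamma_0 u=u_0$.
\end{itemize}
The delicate step is passing the initial condition to the limit. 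For this I would test with $v=\phi\,z$, where $z\in V_m$ and $\phi\in C^1(\overline\Xi)$ with $\phi(\mathrm T)=0$, and integrate by parts in time. This recovers both the equation and $u(0)=u_0$.

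Alternatively, one can avoid Galerkin approximation by invoking Lions' projection theorem (a Lax--Milgram variant without completeness of the test space). Take the Hilbert space $W:=L_2(\Xi;V)\times H$ and the pre-Hilbert test space $\Phi:=\{\phi z\}$ spanned by such products. For the bilinear form $-\int\langle u,\phi' z\rangle+(Au)(\phi z)$ one checks coercivity on $\Phi$ in the $Y\times H$ norm, using $(A(t)z)(z)\gtrsim\|z\|_V^2$ and $-\int\phi\phi'\,\|z\|_H^2=\tfrac12\phi(0)^2\|z\|_H^2$. This gives a solution $u\in Y$, and the equation then shows $\tfrac{\diff u}{\diff t}\in L_2(\Xi;V')$, so $u\in X$.

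Finally, with bijectivity and boundedness established, the bounded inverse theorem (or directly the lower bound) gives $B_e^{-1}\in\cL(Y'\times H,X)$.
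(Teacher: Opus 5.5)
Your proof is correct. It combines the a priori bound obtained by testing with $w$ and using $\diff_t+\diff_t'+\gamma_0'\gamma_0=\gamma_{\mathrm T}'\gamma_{\mathrm T}$ with existence via Faedo--Galerkin or Lions' projection theorem; in the latter, the trial space should simply be $Y$, with the test space normed by $\sqrt{\|v\|_Y^2+\|v(0)\|_H^2}$, and coercivity holds on the whole span because $v(\mathrm T)=0$. The paper gives no proof of its own and only cites Lions--Magenes (Ch.~3, Thm.~4.1) and Wloka (Ch.~IV, \S 26), whose classical arguments are essentially the ones you sketch.
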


We set
$$
A_s:=\tfrac12 (A+A'),\quad A_a:=\tfrac12 (A-A'),
$$
and equip $Y$ with
\begin{align*}
\|\bigcdot\|_Y&:=\sqrt{(A_s \bigcdot)(\bigcdot)},
\intertext{so that $A_s\in \Lis(Y,Y')$ is an isometry, and $X$ with}
\|\bigcdot\|_X&:=\sqrt{\|\bigcdot\|_Y^2+\|\diff_t\bigcdot\|_{Y'}^2+\|\gamma_{\mathrm{T}}\bigcdot\|_H^2},
\end{align*}
both being norms that are equivalent to the canonical norms on $Y$ and $X$, respectively.

\section{Minimal residual discretization} \label{sec:minres}
Let $(X^\delta)_{\delta \in \Delta}$ and $(Y^\delta)_{\delta \in \Delta}$ be families of closed non-trivial subspaces of $X$ and $Y$, respectively.
Sometimes we will refer to $X^\delta$ and $Y^\delta$ as the \emph{trial} and \emph{test} spaces.
We include $\infty \in \Delta$ for which $X^{\infty}=X$ and $Y^{\infty}=Y$, which has the following consequence:\vspace*{.5ex}
\begin{center}
\emph{All results that will be shown for $\delta \in \Delta$ include the continuous setting.}
\end{center}
We will assume that $X^\delta$ and $Y^\delta$ for $\delta \neq\infty$  are finite-dimensional. The trivial embeddings $X^\delta \hookrightarrow X$ and $Y^\delta \hookrightarrow Y$ will be denoted by $E_X^\delta$ and $E_Y^\delta$, respectively.
For clarity, we will often write these operators and their duals explicitly.

We equip $Y^\delta$ with its natural (Hilbertian) norm
\begin{align*}
&\|\bigcdot\|_{Y^\delta}:=\|E_Y^\delta \bigcdot\|_Y,
\intertext{so that}
&\|\bigcdot\|_{(Y^\delta)'}=\sup_{0 \neq v \in Y^\delta}\frac{\bigcdot(v)}{\|v\|_{Y^\delta}}.
\intertext{On $X$, we define \emph{`mesh'-dependent} norms}
&\|\bigcdot\|_{X,\delta}:=\sqrt{\| \bigcdot\|^2_{Y}+\|(E_Y^\delta)' \diff_t \bigcdot\|^2_{(Y^\delta)'}+\|\gamma_{\mathrm{T}} \bigcdot\|_H^2}\,,
\intertext{and equip $X^\delta$ with}
&\|\bigcdot\|_{X^\delta}:=\|E_X^\delta \bigcdot\|_{X,\delta}\,,
\intertext{so that}
&\|\bigcdot\|_{(X^\delta)'}=\sup_{0 \neq v \in X^\delta}\frac{\bigcdot(v)}{\|v\|_{X^\delta}}.
\end{align*}
We emphasize that for $\delta\neq\infty$, the norm on $X^\delta$, and therefore that on $(X^\delta)'$, depend on the choice of the subspace $Y^\delta \subset Y$, which will be clear from the context.

For $\delta=\infty$, the norms $\|\bigcdot\|_{Y^{\infty}}$, $\|\bigcdot\|_{(Y^{\infty})'}$, $\|\bigcdot\|_{X^{\infty}}$, $\|\bigcdot\|_{(X^{\infty})'}$ will usually be written without the index $\infty$. 
Thanks to \eqref{eq:embedding}, $\|\bigcdot\|_X=\sqrt{\|\bigcdot\|^2_{Y}+\|\diff_t \bigcdot\|^2_{Y'}+\|\gamma_{\mathrm{T}} \bigcdot\|_H^2}$ is equivalent to the canonical norm $\sqrt{\|\bigcdot\|^2_{Y}+\|\diff_t \bigcdot\|^2_{Y'}}$ on $X$.

The following result was proven in \cite[Prop.~2.2]{249.992} for the case that $\delta=\infty$, but its proof extends directly to any $\delta \in \Delta$.
It generalizes the well-known `inf-sup identity' from, e.g.,~\cite{70.95} by allowing for both $A_a \neq 0$ and $\delta \neq\infty$.

\begin{theorem}  \label{thm:3}
With $\alpha:=\|A_a\|_{\cL(Y,Y')}=r_\sigma(A_s^{-1} A_a)$, 
$c(\alpha):=1+\frac{\alpha}{2}\big(\alpha+\sqrt{\alpha^2+4}\,\big)$, 
 for $0 \neq w \in X \cap Y^\delta$ it holds that
$$
\frac{\|{E_Y^\delta}' Bw\|_{{Y^\delta}'}^2+\|\gamma_0 w\|_H^2}{\|w\|_{X,\delta}^2} \in \Big[\tfrac{1}{c(\alpha)},c(\alpha)\Big].
$$
\end{theorem}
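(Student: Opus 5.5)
The plan is to turn the ratio into a comparison of two explicit quadratic expressions, using that $w$ lies in $Y^\delta$ so that $w$ can be used both as a trial and as a test function. Let $P^\delta$ be the $\langle\cdot,\cdot\rangle_Y$-orthogonal projector onto $Y^\delta$, where $\langle u,v\rangle_Y=(A_s u)(v)$. Set $K:=A_s^{-1}A_a\in\cL(Y,Y)$.

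\emph{Step 1: the value of $\alpha$.} Since $A_s$ is an isometry, $\|K\|_{\cL(Y,Y)}=\|A_a\|_{\cL(Y,Y')}=\alpha$. Moreover $\langle Ku,v\rangle_Y=(A_a u)(v)=-\langle u,Kv\rangle_Y$, so $K$ is skew-adjoint, hence normal. For a normal operator the norm equals the spectral radius, which gives $\alpha=r_\sigma(A_s^{-1}A_a)$.

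\emph{Step 2: Riesz lifts in $Y^\delta$.} Let $z\in Y^\delta$ be defined by $\langle z,v\rangle_Y=(\diff_t w)(v)$ for all $v\in Y^\delta$. Then $\|{E_Y^\delta}'\diff_t w\|_{{Y^\delta}'}=\|z\|_Y$. Because $w\in Y^\delta$, the lift of ${E_Y^\delta}'Aw=\,{E_Y^\delta}'(A_s w+A_a w)$ is $w+P^\delta K w$. Consequently
\[
\|{E_Y^\delta}' Bw\|_{{Y^\delta}'}^2=\|z+w+P^\delta Kw\|_Y^2 .
\]
Expanding this square uses two facts. First, $\langle z,w\rangle_Y=(\diff_t w)(w)=\tfrac12(\|\gamma_{\mathrm T}w\|_H^2-\|\gamma_0 w\|_H^2)$, by the identity $\diff_t+\diff_t'+\gamma_0'\gamma_0=\gamma_{\mathrm T}'\gamma_{\mathrm T}$ from Sect.~\ref{sec:ivp}, applied with $w$ in both slots. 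Second, $\langle w,P^\delta Kw\rangle_Y=\langle w,Kw\rangle_Y=0$ by skew-adjointness. Together these should give
\[
\|{E_Y^\delta}' Bw\|_{{Y^\delta}'}^2+\|\gamma_0 w\|_H^2=\|z+P^\delta Kw\|_Y^2+\|w\|_Y^2+\|\gamma_{\mathrm T}w\|_H^2 ,
\]
whereas $\|w\|_{X,\delta}^2=\|z\|_Y^2+\|w\|_Y^2+\|\gamma_{\mathrm T}w\|_H^2$. The term $\|\gamma_{\mathrm T}w\|_H^2$ is common to both. For $A_a=0$ this already yields the identity with constant $1$.

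\emph{Step 3: comparing the two forms.} It remains to compare $\|z+Tw\|^2+\|w\|^2$ with $\|z\|^2+\|w\|^2$, where $T:=P^\delta K$ satisfies $\|T\|\le\alpha$. By the triangle inequality, $\|z+Tw\|^2+\|w\|^2\le(\|z\|+\alpha\|w\|)^2+\|w\|^2$. The right-hand side is bounded by $\lambda_{\max}\,(\|z\|^2+\|w\|^2)$, with $\lambda_{\max}$ the largest eigenvalue of $M^\top M$ for $M=\left[\begin{smallmatrix}1&\alpha\\0&1\end{smallmatrix}\right]$. One has $M^\top M=\left[\begin{smallmatrix}1&\alpha\\\alpha&1+\alpha^2\end{smallmatrix}\right]$, with trace $2+\alpha^2$ and determinant $1$. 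So $\lambda_{\max}=1+\tfrac{\alpha}{2}(\alpha+\sqrt{\alpha^2+4})=c(\alpha)$ and $\lambda_{\min}=1/c(\alpha)$.

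For the lower bound, apply the same estimate to the inverse map $(z',w)\mapsto(z'-Tw,w)$, whose matrix has the same singular values. Finally, adding the common nonnegative term $\|\gamma_{\mathrm T}w\|_H^2$ keeps the ratio inside $[1/c(\alpha),c(\alpha)]$, since $1/c(\alpha)\le 1\le c(\alpha)$.

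The main obstacle is Step 2: checking that the cross terms collapse exactly. In particular, the contribution $\langle w,P^\delta Kw\rangle_Y$ vanishes only because $w\in Y^\delta$, which allows $P^\delta$ to be dropped before skew-symmetry is used. Once that identity is in place, Step 3 is an elementary $2\times2$ singular value computation.
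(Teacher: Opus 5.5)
Your proof is correct and takes essentially the same route as the proof the paper relies on: the paper gives no proof of its own but cites \cite[Prop.~2.2]{249.992} for $\delta=\infty$ and says that argument extends directly to $\delta\neq\infty$. That argument, like yours, expands the squared Riesz lift using $w\in Y^\delta$, the identity $\diff_t+\diff_t'+\gamma_0'\gamma_0=\gamma_{\mathrm{T}}'\gamma_{\mathrm{T}}$ and the skew-symmetry of $A_a$, and then concludes with the eigenvalues $c(\alpha)^{\pm1}$ of $\left[\begin{smallmatrix}1&\alpha\\ \alpha&1+\alpha^2\end{smallmatrix}\right]$.
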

\medskip

Our \emph{minimal residual approximation} $u^\delta \in X^\delta$ for the solution $u \in X$ of \eqref{eq:33} is defined as
\be \label{ls}
u^\delta:=\argmin_{w \in X^\delta} \|{E_Y^\delta}'(\ell-B E_X^\delta w)\|_{{Y^\delta}'}^2+ \|u_0-\gamma_0 E_X^\delta w\|_H^2.
\ee
The corresponding Euler--Lagrange equation reads as
\be \nonumber
\begin{split}
({E_X^\delta}' B' E_Y^\delta ({E_Y^\delta}'A_s E_Y^\delta)^{-1} {E_Y^\delta}' B E_X^\delta&+{E_X^\delta}'  \gamma_0' \gamma_0 E_X^\delta) u^\delta\\
&=
{E_X^\delta}' B' E_Y^\delta ({E_Y^\delta}'A_s E_Y^\delta)^{-1} {E_Y^\delta}' \ell+{E_X^\delta}'  \gamma_0' u_0,
\end{split}
\ee
or equivalently, by setting $\lambda^\delta=\lambda^\delta(u^\delta):=({E_Y^\delta}'A_s E_Y^\delta)^{-1}{E^{\delta}_Y}'( \ell-B E^\delta_X u^{\delta}) \in Y^\delta$, as
\be \label{m9}
\left[\begin{array}{@{}ccc@{}}{E_Y^\delta}'A_s E_Y^\delta & {E_Y^{\delta}}' B E^\delta_X\\ {E^\delta_X}' B' E_Y^{\delta}& -{E^\delta_X}'  \gamma_0' \gamma_0 E^\delta_X \end{array}\right]
\left[\begin{array}{@{}c@{}} \lambda^{\delta} \\ u^{\delta} \end{array}\right]=
\left[\begin{array}{@{}c@{}} {E^{\delta}_Y}' \ell \\ -{E^\delta_X}'  \gamma_0' u_0 \end{array}\right].
\ee
The theory of saddle-point problems shows that \eqref{m9}, and so \eqref{ls}, has a unique solution if and only if
$$
(\gamma_\delta^{B_e})^2:=\inf_{0 \neq w \in X^\delta} 
\frac{\|{E_Y^\delta}' B E^\delta_X w\|^2_{{Y^\delta}'}+\|\gamma_0 E_X^\delta w\|_H^2}{\|B E^\delta_X w\|^2_{Y'}+\|\gamma_0 E_X^\delta w\|_H^2}>0.
$$

\noindent In that case, \cite[Thm.~3.3]{204.19}, together with $\|B_e\|_{\cL(X,Y'\times H)}\|B_e^{-1}\|_{\cL(Y'\times H,X)} \leq c(\alpha)$ by Theorem~\ref{thm:3},
shows that
$$
\|u-E_X^\delta u^\delta\|_X \leq \frac{c(\alpha)}{\gamma_\delta^{B_e}} \inf_{w \in X^\delta} \|u-E_X^\delta w\|_X.
$$

We will \emph{always} assume that for all $\delta \in \Delta$,
$$
X^\delta \subset Y^\delta.
$$
Consequently, by using the equivalence of $\|\bigcdot\|_X$ and $\|\bigcdot\|_{X,\delta}$ on the finite-dimensional space $X^\delta$, 
another consequence of Theorem~\ref{thm:3} is that $\gamma_\delta^{B_e}>0$.
In particular, \eqref{m9}, and so \eqref{ls}, has a unique solution.
\medskip

Ideally, the choice of the families $(X^\delta)_{\delta \in \Delta}$ and $(Y^\delta)_{\delta \in \Delta}$ should guarantee the \emph{uniform stability}
\be \label{eq:infsup1}
\inf_{\delta \in \Delta} \gamma_\delta^{B_e}>0.
\ee
The constant $\gamma_\delta^{B_e}$ is the inf-sup constant associated to the operator $B_e\colon X \rightarrow Y' \times H$ and trial and test spaces $X^\delta$ and $Y^\delta \times H$. 
As such, it can be characterized in terms of approximability of the `optimal test space' $\ran R_{Y' \times H}^{-1} B_e E_X^\delta$ by $Y^\delta \times H$ \cite[Prop.~2.5]{35.8565}, or alternatively, by the norm of a Fortin interpolator related to $B_e$ and $X^\delta$ and $Y^\delta \times H$ \cite[Prop.~5.1]{249.992}. Due to the non-trivial structure of $B_e$, both characterizations, however, do not give easy access to the constant $\gamma_\delta^{B_e}$.
Fortunately, Theorem~\ref{thm:3} allows us to bound $\gamma_\delta^{B_e}$ from below by a multiple of the inf-sup constant $ \gamma_\delta^{\diff_t}$ associated to $\diff_t\colon X \rightarrow Y'$ and $X^\delta$ and $Y^\delta$. Indeed, setting 
\be \label{eq:56}
\gamma^{\diff_t}_\delta:=\inf_{\{w \in X^\delta\colon \diff_t E_X^\delta w \neq 0\}} \frac{\|{E_Y^\delta}'\diff_t E^\delta_X w\|_{{Y^\delta}'}}{\|\diff_t E^\delta_X w\|_{Y'}},
\ee
and noticing that $\|\bigcdot\|_{X^\delta} \geq \gamma_\delta^{\diff_t}\|E_X^\delta \bigcdot\|_X$, thanks to $X^\delta \subset Y^\delta$ an application of Theorem~\ref{thm:3}  shows that 
\be \label{eq:lower}
\gamma_\delta^{B_e} \geq \frac{\gamma^{\diff_t}_\delta}{c(\alpha)}.
\ee
As we will see in Sect.~\ref{sec:inf-sup}, $\gamma^{\diff_t}_\delta$ can be shown to be uniformly positive in tensor product, and more generally, time-slab settings. For these settings, we conclude the following result.

\begin{theorem}[Quasi-optimality] \label{thm:quasi} If
\be \label{eq:infsup2}
\gamma^{\diff_t}_\Delta:=\inf_{\delta \in \Delta} \gamma^{\diff_t}_\delta >0,
\ee
then\footnote{Using the equivalence of the minimal residual discretization and the so-called BEN method, in \cite[Thm.~3.1]{249.992} an upper bound was found that is sharper for $A_a \neq 0$.}
$$
\|u-E_X^\delta u^\delta\|_X \leq \frac{c(\alpha)^2}{\gamma^{\diff_t}_\Delta} \inf_{w \in X^\delta} \|u-E_X^\delta  w\|_X.
$$
\end{theorem}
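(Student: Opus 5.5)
The plan is to chain the two estimates already available in the text. First, \cite[Thm.~3.3]{204.19} combined with $\|B_e\|_{\cL(X,Y'\times H)}\|B_e^{-1}\|_{\cL(Y'\times H,X)} \leq c(\alpha)$ gives
$$
\|u-E_X^\delta u^\delta\|_X \leq \frac{c(\alpha)}{\gamma_\delta^{B_e}} \inf_{w \in X^\delta} \|u-E_X^\delta w\|_X.
$$
Second, \eqref{eq:lower} gives $\gamma_\delta^{B_e} \geq \gamma^{\diff_t}_\delta/c(\alpha)$. By the definition \eqref{eq:infsup2}, $\gamma^{\diff_t}_\delta \geq \gamma^{\diff_t}_\Delta>0$ for every $\delta\in\Delta$. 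Substituting these two lower bounds into the quasi-optimality estimate gives the factor $c(\alpha)^2/\gamma^{\diff_t}_\Delta$, which is the claim.

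Since \eqref{eq:lower} is the substantive ingredient, I would verify it carefully.

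Take $0\neq w\in X^\delta$. By $X^\delta\subset Y^\delta$ we have $E_X^\delta w\in X\cap Y^\delta$, so Theorem~\ref{thm:3} applies and yields
$$
\|{E_Y^\delta}'BE_X^\delta w\|_{{Y^\delta}'}^2+\|\gamma_0E_X^\delta w\|_H^2\geq c(\alpha)^{-1}\|E_X^\delta w\|_{X,\delta}^2 .
$$

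Next, compare $\|\cdot\|_{X,\delta}$ with $\|\cdot\|_X$. The two norms differ only in the $\diff_t$ term. By definition \eqref{eq:56}, $\|{E_Y^\delta}'\diff_tE_X^\delta w\|_{{Y^\delta}'}\geq\gamma^{\diff_t}_\delta\|\diff_tE_X^\delta w\|_{Y'}$; when $\diff_t E_X^\delta w=0$ this holds trivially. Since $\gamma^{\diff_t}_\delta\le 1$, the same factor can be placed on the $Y$ and trace terms, giving $\|E_X^\delta w\|_{X,\delta}\geq\gamma^{\diff_t}_\delta\|E_X^\delta w\|_X$.

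Applying Theorem~\ref{thm:3} with $\delta=\infty$ gives $\|E_X^\delta w\|_X^2\geq c(\alpha)^{-1}(\|BE_X^\delta w\|_{Y'}^2+\|\gamma_0E_X^\delta w\|_H^2)$. Chaining the three inequalities yields
$$
(\gamma_\delta^{B_e})^2\geq (\gamma^{\diff_t}_\delta)^2/c(\alpha)^2,
$$
which is \eqref{eq:lower}.

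The only points needing care are the inequality $\gamma^{\diff_t}_\delta\le1$, which holds because the discrete dual norm is dominated by the full one, and the legitimacy of applying Theorem~\ref{thm:3} at $\delta=\infty$, which is covered by the convention that $\infty\in\Delta$. With both in hand, the combination in the first paragraph is immediate.
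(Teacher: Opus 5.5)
Your proposal is correct and is the paper's argument: the theorem follows by inserting \eqref{eq:lower} and $\gamma^{\diff_t}_\delta\geq\gamma^{\diff_t}_\Delta$ into the bound $\|u-E_X^\delta u^\delta\|_X\leq \frac{c(\alpha)}{\gamma_\delta^{B_e}}\inf_{w\in X^\delta}\|u-E_X^\delta w\|_X$ obtained from \cite[Thm.~3.3]{204.19}. Your derivation of \eqref{eq:lower} uses $\|\bigcdot\|_{X^\delta}\geq\gamma^{\diff_t}_\delta\|E_X^\delta\bigcdot\|_X$ together with Theorem~\ref{thm:3} applied at $\delta$ (via $X^\delta\subset Y^\delta$) and at $\delta=\infty$, which simply spells out the paper's one-line justification.
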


Another application of \eqref{eq:infsup2} is in \emph{a posteriori error estimation}.  
It is known, e.g.~\cite[Prop.~5.1]{249.992}, that $\gamma_\delta^{B_e}>0$ is equivalent to the existence of a Fortin projector $\Pi^\delta\colon Y\times H \rightarrow Y^\delta\times H$ characterized by $(B_e X^\delta)(\ran (\identity-\Pi^\delta))=0$ and $\|\Pi^\delta\|_{\cL(Y\times H,Y\times H)} =\frac{1}{\gamma_\delta^{B_e}}$.
Using \eqref{eq:lower}, we can estimate $\frac{1}{\gamma_\delta^{B_e}} \leq \frac{c(\alpha)}{\gamma^{\diff_t}_\Delta}$. The following theorem builds on \cite[Thm.~2.1]{35.93556}, but it provides better constants in the upper bound.

\begin{theorem} \label{thm:4}
Assume \eqref{eq:infsup2}. Then, the solution $(\lambda^\delta,u^\delta)$ of \eqref{m9} satisfies
$$
\tfrac{1}{\sqrt{c(\alpha)}} \eta(\delta)\leq \|u-E_X^\delta u^\delta\|_X \leq \tfrac{c(\alpha) \sqrt{c(\alpha)}}{\gamma_\Delta^{\diff_t}} \eta(\delta)+\sqrt{c(\alpha)}\|(\identity-{\Pi^\delta}' )f\|_{Y'\times H},
$$
where $\eta(\delta):=\sqrt{\|\lambda^\delta\|_{Y^\delta}^2+\|u_0-\gamma_0 E_X^\delta u^\delta\|_H^2}$.
\end{theorem}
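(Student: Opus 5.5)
Both bounds follow from Theorem~\ref{thm:3} applied to the error $e:=u-E_X^\delta u^\delta$, combined with the identity
$$
\|\lambda^\delta\|_{Y^\delta}=\|{E_Y^\delta}'(\ell-BE_X^\delta u^\delta)\|_{{Y^\delta}'}=\|{E_Y^\delta}'Be\|_{{Y^\delta}'}.
$$
This identity holds because $\lambda^\delta$ is the Riesz lift in $Y^\delta$ of ${E_Y^\delta}'(\ell-BE_X^\delta u^\delta)$, and $\ell=Bu$. Likewise $u_0-\gamma_0E_X^\delta u^\delta=\gamma_0 e$. Hence
$$
\eta(\delta)^2=\|{E_Y^\delta}'Be\|_{{Y^\delta}'}^2+\|\gamma_0 e\|_H^2 .
$$

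\emph{Lower bound.} Since $\|{E_Y^\delta}'Be\|_{{Y^\delta}'}\le\|Be\|_{Y'}$, we get $\eta(\delta)^2\le \|B e\|_{Y'}^2+\|\gamma_0 e\|_H^2$. Theorem~\ref{thm:3} with $\delta=\infty$ (note $e\in X\subset Y$) bounds the right-hand side by $c(\alpha)\|e\|_X^2$. This gives $\eta(\delta)\le\sqrt{c(\alpha)}\|e\|_X$.

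\emph{Upper bound.} Theorem~\ref{thm:3} with $\delta=\infty$ gives $\|e\|_X\le\sqrt{c(\alpha)}\,\|B_e e\|_{Y'\times H}$, so it suffices to show
$$
\|B_e e\|_{Y'\times H}\le \tfrac{c(\alpha)}{\gamma_\Delta^{\diff_t}}\eta(\delta)+\|(\identity-{\Pi^\delta}')f\|_{Y'\times H}.
$$
We use the Fortin projector. Write $B_e e=f-B_eE_X^\delta u^\delta$ and split $f={\Pi^\delta}'f+(\identity-{\Pi^\delta}')f$. The property $(B_eX^\delta)(\ran(\identity-\Pi^\delta))=0$ gives ${\Pi^\delta}'B_eE_X^\delta=B_eE_X^\delta$. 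Therefore
$$
B_e e={\Pi^\delta}'(f-B_eE_X^\delta u^\delta)+(\identity-{\Pi^\delta}')f={\Pi^\delta}'B_ee+(\identity-{\Pi^\delta}')f.
$$

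It remains to bound $\|{\Pi^\delta}'B_e e\|_{Y'\times H}\le \frac{c(\alpha)}{\gamma_\Delta^{\diff_t}}\eta(\delta)$. For $(v,h)\in Y\times H$, we have $({\Pi^\delta}'B_ee)(v,h)=(B_ee)(\Pi^\delta(v,h))$. Since $\Pi^\delta(v,h)\in Y^\delta\times H$, this is at most
$$
\sqrt{\|{E_Y^\delta}'Be\|^2_{{Y^\delta}'}+\|\gamma_0e\|_H^2}\;\|\Pi^\delta(v,h)\|_{Y\times H}\le \eta(\delta)\,\|\Pi^\delta\|\,\|(v,h)\|.
$$
Finally, $\|\Pi^\delta\|=1/\gamma_\delta^{B_e}\le c(\alpha)/\gamma^{\diff_t}_\Delta$ by \eqref{eq:lower}. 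Multiplying by $\sqrt{c(\alpha)}$ yields the stated constants.

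The main obstacle is this Fortin step. The cancellation ${\Pi^\delta}'B_eE_X^\delta=B_eE_X^\delta$ must be justified carefully. It should also be checked that $\Pi^\delta$ acts on the $H$-component compatibly, i.e.\ is the identity there or at least maps into $Y^\delta\times H$, so that the dual-norm pairing reduces exactly to $\eta(\delta)$.
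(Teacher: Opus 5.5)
Your proposal is correct and follows the paper's proof essentially step by step. Both arguments use Theorem~\ref{thm:3} with $\delta=\infty$ for both bounds, the same Fortin splitting (you apply it to the functional $B_e e$, the paper to the test function $v=\Pi^\delta v+(\identity-\Pi^\delta)v$, which is the dual view of the same step), and the identification of the $Y^\delta\times H$ dual norm of the residual with $\eta(\delta)$. The concerns you flag at the end are already settled by the paper's definition of $\Pi^\delta$ as a map $Y\times H\to Y^\delta\times H$ with $(B_eX^\delta)(\ran(\identity-\Pi^\delta))=0$.
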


\begin{proof} The result follows from a combination of the following statements.

For $w \neq u \in X$, Theorem~\ref{thm:3} gives that $\frac{\|f-B_e w\|^2_{Y' \times H}}{\|u-w\|^2_X} \in [\frac{1}{c(\alpha)},c(\alpha)]$.

For $w \in X^\delta$ and $v \in Y \times H$, it holds that
$$
|(f-B_e E_X^\delta w)(v)| \leq |(f-B_e E_X^\delta w)(\Pi^\delta v)|+|f((\identity-\Pi^\delta)v)|,
$$
and thus, using $\|\Pi^\delta\|_{\cL(Y\times H,Y\times H)} \le \frac{c(\alpha)}{\gamma_\Delta^{\diff_t}}$,
\begin{align*}
\sup_{0 \neq v \in Y^\delta \times H} \hspace*{-0.5em} \frac{|(f-B_e E_X^\delta w)(v)|}{\|v\|_{Y \times H}}
& \leq \|f-B_e E_X^\delta w\|_{Y' \times H}=\sup_{0 \neq v \in Y \times H} \hspace*{-0.5em} \frac{|(f-B_e E_X^\delta w)(v)|}{\|v\|_{Y \times H}}
\\
&\leq \tfrac{c(\alpha)}{\gamma^{\diff_t}_\delta} \sup_{0 \neq v \in Y^\delta \times H} \hspace*{-1em} \frac{|(f-B_e E_X^\delta w)(v)|}{\|v\|_{Y \times H}}+
\|(\identity-{\Pi^\delta}') f\|_{Y' \times H}.
\end{align*}

It holds that
\begin{align*}
\sup_{0 \neq v \in Y^\delta \times H} \hspace*{-0.5em}\frac{|(f-B_e E_X^\delta w)(v)|}{\|v\|_{Y \times H}}
&=
 \hspace*{-0.5em} \sup_{0 \neq (v_1,v_2) \in Y^\delta \times H} \hspace*{-1em}\frac{|(\ell-B E_X^\delta w)(v_1)+\langle u_0 -\gamma_0 E_X^\delta w,v_2\rangle_H|}{\sqrt{\|v_1\|_{Y}^2+\|v_2\|_H^2}}\\
&=
\sqrt{\|{E_Y^\delta}'(\ell-B E_X^\delta w)\|_{{Y^\delta}'}^2+\|u_0-\gamma_0 E_X^\delta w\|_H^2}.
\end{align*}

Finally, taking $w = u^\delta$, it holds that $\|{E_Y^\delta}'(\ell-B E_X^\delta u^\delta)\|_{{Y^\delta}'}=\|\lambda^\delta\|_{Y^\delta}$.
\end{proof}

The above theorem shows that the a posteriori estimator $\eta(\delta)$ for $\|u-u^\delta\|_X$ is \emph{efficient}. Because of the presence of the `data-oscillation' term $\sqrt{c(\alpha)} \|(\identity-{\Pi^\delta}' )f\|_{Y'\times H}$, it is however not clear whether it is \emph{reliable}. If $\inf_{\delta \in \Delta} \gamma_\delta^{B_e}>0$ 
had been established by \emph{constructing} a Fortin interpolant, then this construction might have been such that this data-oscillation term can be shown to be of higher order than
$\inf_{w \in X^\delta} \|u-w\|_X$. 
Consequently, the estimator would be at least asymptotically reliable (see, e.g., \cite{35.93556,204.195} for examples).

In the current case, we only know that a Fortin projector exists. Since for any $w \in X^\delta$, we have
$(\identity-{\Pi^\delta}' )f=(\identity-{\Pi^\delta}' )(B_e (u-E_X^\delta w))$, 
and, because $Y^\delta \times H$ is a non-trivial, proper subspace of $Y \times H$, it holds that $\|\identity-\Pi^\delta\|_{\cL(Y\times H ,Y \times H)}=\|\Pi^\delta\|_{\cL(Y\times H,Y \times H)}$ (\cite{169.5,315.7}),
we can estimate
\begin{align}
\begin{split}
\label{eq:osc}
\sqrt{c(\alpha)}\|(\identity&-{\Pi^\delta}' )f\|_{Y'\times H} \\ & \leq \sqrt{c(\alpha)} \|\identity-\Pi^\delta\|_{\cL(Y \times H,Y \times H)} \|B_e\|_{\cL(X,Y' \times H)} \inf_{w \in X^\delta}\|u-E_X^\delta w\|_X\\
&\leq \sqrt{c(\alpha)} \frac{c(\alpha)}{\gamma_\Delta^{\diff_t}} \sqrt{c(\alpha)}\inf_{w \in X^\delta}\|u-E_X^\delta w\|_X,
\end{split}
\end{align}
which, however, still does not guarantee reliability.

Therefore, let us make the \emph{saturation assumption} that, for some constant $0<q<\big(\frac{c(\alpha)^2}{\gamma_\Delta^{\diff_t}}\big)^{-1}$, for any $\delta \in \Delta$ there exists a $\widetilde{\delta}\in \Delta$ with
$X^\delta \subset X^{\widetilde{\delta}}$, $Y^\delta \subset Y^{\widetilde{\delta}}$ such that
$$ 
\inf_{\widetilde{w} \in X^{\widetilde{\delta}}}\|u-E_X^{\widetilde{\delta}} \widetilde{w}\|_X \leq q \inf_{w \in X^\delta}\|u-E_X^\delta w\|_X.
$$
Then by \emph{replacing} the test space $Y^\delta$ by $Y^{\widetilde{\delta}}$ for the computation of the minimal residual approximation $u^\delta \in X^\delta$, 
\eqref{eq:osc} with $\Pi^\delta$ replaced by $\Pi^{\widetilde{\delta}}$ and $X^\delta$ by $X^{\widetilde{\delta}}$ shows that the data-oscillation term can be bounded by $\frac{c(\alpha)^{2}}{\gamma_\Delta^{\diff_t}} q \inf_{w \in X^\delta}\|u-E_X^\delta w\|_X$, which gives reliability.

It is, however, well-known, that the saturation assumption cannot hold uniformly over all data $f=(\ell,u_0) \in Y' \times H$ \cite{68.5}.
Later, in Proposition~\ref{prop:1}, we will provide a (data-dependent) a posteriori verifiable condition for the estimator $\eta(\delta)$ to be efficient \emph{and} reliable.

\section{Inf-sup stability condition \eqref{eq:infsup2}} \label{sec:inf-sup}
We briefly summarize results from \cite{11,249.99,249.992,249.991} about situations for which validity of  the crucial stability condition \eqref{eq:infsup2} has been verified.
We consider the case that for some bounded Lipschitz domain $\Omega \subset \R^d$, 
$$
(H,V)=(L_2(\Omega),H^1_0(\Omega)).
$$
In this section we equip $Y=L_2(\Xi;V)$ with its canonical tensor product norm instead of $\sqrt{(A_s \bigcdot)(\bigcdot)}$. Because both norms are equivalent, the resulting inf-sup constants $\gamma_\delta^{\diff_t}$ are uniformly equivalent, and so is $\gamma_\Delta^{\diff_t}=\inf_{\delta \in \Delta}\gamma_\delta^{\diff_t}$.

\begin{example}[tensor-product setting] \label{ex:tensor}
For $\delta \in \Delta$, let 
$$
Y^\delta = Y_t^\delta \otimes Y_x^\delta,\quad X^\delta = X_t^\delta \otimes X_x^\delta.
$$
Then, a tensor-product argument shows that with
$$
\gamma_{\Delta,t}:=\inf_{\delta \in \Delta} \! \inf_{\{z_t \in X_t^\delta\colon \tfrac{\diff z_t}{\diff t} \neq 0\}}\hspace*{-1.2em}\frac{\sup_{0 \neq v_t \in Y_t^\delta} \frac{\int_\Xi \tfrac{\diff z_t}{\diff t} v_t\diff t}{{\|v_t\|_{L_2(\Xi)}}}}{\|\tfrac{\diff z_t}{\diff t}\|_{L_2(\Xi)}}, \,\,
\gamma_{\Delta,x}:=\inf_{\delta \in \Delta}\inf_{0 \neq z_x \in X_x^\delta}\hspace*{-0.8em}\frac{\sup_{0 \neq v_x \in Y_x^\delta} \frac{\langle z_x, v_x\rangle_H}{\|v_x\|_V}}{\|z_x\|_{V'}},
$$
it holds that $\gamma^{\diff_t}_\Delta \geq \gamma_{\Delta,t} \gamma_{\Delta,x}$.
Clearly, $\gamma_{\Delta,t} >0$ if $\frac{\diff}{\diff t} X_t^\delta \subset Y_t^\delta$ (which is, however, not a necessary condition, see \cite[Prop.~6.1]{11} for an example).

If $Y_x^\delta=X_x^\delta$, then $\gamma_{\Delta,x}=\inf_{\delta \in \Delta} \|P^\delta\|_{\cL(V,V)}^{-1}$ where $P^\delta$ is the $H$-orthogonal projector onto $X_x^\delta$.
Uniform boundedness in the $H^1(\Omega)$-norm of the $L_2(\Omega)$-orthogonal projector onto finite element spaces $X_x^\delta$ is an intensively studied subject.
Recently, in \cite{64.596} it was shown that for $d<7$ this uniform boundedness holds true for the family of all Lagrange finite element spaces of arbitrary degree w.r.t.~all conforming newest vertex bisection meshes that can be created from an initial mesh.
\end{example}

Above example shows that \eqref{eq:infsup2} can be realized for trial and test spaces being finite element spaces w.r.t.~partitions of the space-time cylinder 
$$
\Sigma:=\Xi \times \Omega
$$
that are products of a partition of $\Xi$ and a partition of $\Omega$. The next example shows that this setting can be generalized to the situation that in different time intervals different spatial partitions are employed.

\begin{example}[time-slab setting] \label{ex:time-slab}
Let $(\bar{Y}^\delta, \bar{X}^\delta)_{\delta \in \bar{\Delta}}$ be a family of pairs of closed subspaces of $Y$ and $X$
for which $\gamma^{\diff_t}_{\bar{\Delta}}=\gamma^{\diff_t}_{\bar{\Delta}} \big((\bar{Y}^\delta \times \bar{X}^\delta)_{\delta \in \bar{\Delta}}\big)>0$, e.g.,~a family as in Example~\ref{ex:tensor}.
Suppose that for $\delta \in \Delta$, $Y^\delta$ and $X^\delta$ are such that for some
finite partition $\Xi^\delta=([t_{i-1}^\delta,t_{i}^\delta])_i$ of $\Xi$, with
$G^\delta_i(t):=t_{i-1}^\delta+\frac{t}{\mathrm{T}}(t_{i}^\delta-t_{i-1}^\delta)$ and arbitrary $\delta_i \in \bar{\Delta}$, it holds that
\begin{align*}
Y^\delta \supseteq \{v \in L_2(\Xi;V) &\colon v|_{(t^\delta_{i-1},t^\delta_i)} \circ G^\delta_i \in \bar{Y}^{\delta_i}\},\\
X^\delta \subseteq \{u \in C(\overline{\Xi};V) &\colon u|_{(t^\delta_{i-1},t^\delta_i)} \circ G^\delta_i \in \bar{X}^{\delta_i}\}.
\end{align*}
Then, it holds that $\gamma^{\diff_t}_\delta \geq \gamma^{\diff_t}_{\bar{\Delta}}>0$ (see \cite{249.99,249.992}).
\end{example}

\begin{remark} Another generalization of the `full' tensor-product setting from Example~\ref{ex:tensor} outside the field of finite elements is that to `sparse' tensor products, see \cite{11}.
With the application of \emph{wavelets-in-time} instead of finite elements, this `sparse' setting has been generalized by allowing (adaptive)  `refinements' (more accurately called `enrichments') that are simultaneously localized in time \emph{and} space, see \cite{249.991}. 
\end{remark}

\section{A data-dependent condition for quasi-optimality}  \label{sec:data-dependent}
As we have seen in Theorem~\ref{thm:quasi}, under the uniform inf-sup condition \eqref{eq:infsup2}, our minimal residual approximation $u^\delta \in X^\delta$, i.e., the solution of \eqref{ls}, is, w.r.t.~the norm on $X$, a quasi-best approximation to the solution $u$ of our parabolic initial value problem. 

Since $u$ may have singularities that are simultaneously localized in time and space, we would like to consider families of finite element partitions that contain partitions that are locally refined simultaneously in time and space. Such partitions, however, do \emph{not} allow a subdivision into time-slabs.
In Appendix~\ref{sec:necessity}, it is demonstrated that, because of the latter, \emph{neither \eqref{eq:infsup2} nor \eqref{eq:infsup1} can be expected to be valid}.
\medskip

There is a one-to-one relation between \eqref{eq:infsup2} and uniform approximability in the $Y$-norm of the optimal test space $\ran A_s^{-1} \diff_t E_X^\delta$ by $Y^\delta$ (cf.~\eqref{eq:59} and use $R_Y^{-1}=A_s^{-1}$). In this section, we show that quasi-optimality of $u^\delta$ holds also true under 
an \emph{a posteriori} condition \eqref{eq:pjotr} on the distance between $A_s^{-1} (\ell-B E_X^\delta u^\delta)$ and $Y^\delta$. 
As \eqref{eq:infsup2}, validity of \eqref{eq:pjotr} will require that $Y^\delta$ is sufficiently large in relation to $X^\delta$. 
While \eqref{eq:infsup2} gives quasi-optimality of $u^\delta \in X^\delta$ \emph{uniformly in all data} $(\ell,u_0) \in Y' \times H$, \eqref{eq:pjotr} guarantees this \emph{only for the data at hand}.
\medskip

The idea of the data-dependent condition has been introduced in \cite{45.44} in the setting of least-squares solvers for convection-diffusion problems. In a somewhat modified form it has also been investigated in \cite{64.18}.
The setting considered in \cite{45.44} is slightly different from ours. In our framework it corresponds to the case where $u_0=0$, so that for
$X^\delta \subset X_0:=\{w \in X\colon \gamma_0 w=0\}$, \eqref{ls} reads as $u^\delta:=\argmin_{w \in X^\delta} \|{E_Y^\delta}'(\ell-B E_X^\delta w)\|_{{Y^\delta}'}^2$.
This difference however is harmless. Relevant though is that in the following Theorem~\ref{thm:pjotr} \emph{any} constant $\varrho\geq 0$ is allowed, whereas the value of the corresponding constant 
in \cite[Lemma 3.3]{45.44} is restricted to $[0,2)$.
  
 \begin{theorem} \label{thm:pjotr} Given $(\ell,u_0) \in Y' \times H$, let $(\lambda^\delta,u^\delta) \in Y^\delta \times X^\delta$ be the solution of \eqref{m9} . With $\lambda_\delta=\lambda_\delta(u^\delta):=A_s^{-1}(\ell-B  u^\delta)$, suppose that for some $\varrho \geq 0$,
 \be \label{eq:pjotr}
 \|\lambda_\delta- \lambda^\delta\|_Y \leq \varrho \sqrt{\| \lambda^\delta\|_Y^2+(1+\tfrac{1}{(\varrho+\frac12)^2}) \|u_0-\gamma_0 u^\delta\|_H^2)}.
 \ee
 Then, it holds that
  \be \label{eq:g12}
\|u- u^\delta\|_X \leq c(\alpha) \sqrt{\varrho^2+\tfrac{9 \varrho+4}{4 \varrho+4}} \inf_{\bar{u}^{\delta} \in X^\delta} \|u- \bar{u}^\delta\|_X.
 \ee
 where $u:=B_e^{-1}(\ell,u_0)$.
 \end{theorem}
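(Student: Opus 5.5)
The plan is to reduce the statement to a comparison of \emph{exact} residual norms. For $w\in X$ with $w\neq u$, Theorem~\ref{thm:3} with $\delta=\infty$ gives
$$
\tfrac{1}{c(\alpha)}\|u-w\|_X^2\le \|\ell-Bw\|_{Y'}^2+\|u_0-\gamma_0 w\|_H^2\le c(\alpha)\|u-w\|_X^2 .
$$
Since $A_s$ is an isometry, $\|\ell-Bu^\delta\|_{Y'}=\|\lambda_\delta\|_Y$. Writing $e_0:=\|u_0-\gamma_0u^\delta\|_H$, it therefore suffices to prove, for every $\bar u^\delta\in X^\delta$,
$$
\|\lambda_\delta\|_Y^2+e_0^2\le K\big(\|\ell-B\bar u^\delta\|_{Y'}^2+\|u_0-\gamma_0\bar u^\delta\|_H^2\big),\qquad K:=\varrho^2+\tfrac{9\varrho+4}{4\varrho+4},
$$
because the two applications of Theorem~\ref{thm:3} then produce the factor $\sqrt{c(\alpha)}\cdot\sqrt{c(\alpha)}=c(\alpha)$ in \eqref{eq:g12}.

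\textbf{Step 1: Pythagoras for $\lambda_\delta$.} The first row of \eqref{m9} says that $\lambda^\delta$ is the $A_s$-orthogonal (that is, $Y$-orthogonal) projection of $\lambda_\delta$ onto $Y^\delta$. Hence
$$
\|\lambda_\delta\|_Y^2=\|\lambda^\delta\|_Y^2+\|\lambda_\delta-\lambda^\delta\|_Y^2 .
$$
Inserting \eqref{eq:pjotr} bounds the left-hand side $\|\lambda_\delta\|_Y^2+e_0^2$ by
$$
(1+\varrho^2)\|\lambda^\delta\|_Y^2+\Big(1+\varrho^2\big(1+\tfrac{1}{(\varrho+\frac12)^2}\big)\Big)e_0^2 .
$$

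\textbf{Step 2: crude comparison.} By \eqref{ls}, the pair $(u^\delta,\lambda^\delta)$ minimizes the discrete residual, so $\|\lambda^\delta\|_Y^2+e_0^2$ is at most the discrete residual of any $\bar u^\delta$, which is at most its exact residual $\|\ell-B\bar u^\delta\|_{Y'}^2+\|u_0-\gamma_0\bar u^\delta\|_H^2$. This already yields a constant of order $\varrho^2+O(1)$. However, it does not give the precise $K$: at $\varrho=0$ it gives $K=1$, which is right, but for large $\varrho$ the $e_0^2$-coefficient behaves like $\varrho^2+2$ and its interplay with $(1+\varrho^2)$ is not sharp.

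\textbf{Step 3: sharpening (the main obstacle).} To get the stated $K$, I would expand the exact residual of $\bar u^\delta$ around $u^\delta$. Put $z:=u^\delta-\bar u^\delta\in X^\delta\subset Y^\delta$. Then
$$
\|\ell-B\bar u^\delta\|_{Y'}^2=\|\lambda_\delta+A_s^{-1}Bz\|_Y^2 ,
$$
and I would split $A_s^{-1}Bz$ into its $Y^\delta$-component and its orthogonal complement. The $Y^\delta$-components combine with $\lambda^\delta$. The second row of \eqref{m9}, $(Bz)(\lambda^\delta)=\langle u_0-\gamma_0u^\delta,\gamma_0 z\rangle_H$, fixes the cross term with $\lambda^\delta$ in terms of $e_0$ and $\gamma_0z$. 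The $\gamma_0$ part gives $\|u_0-\gamma_0u^\delta+\gamma_0 z\|_H^2$.

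The orthogonal part $\lambda_\delta-\lambda^\delta$ pairs with $(\identity-P_{Y^\delta})A_s^{-1}Bz$ through a cross term that is not controlled a priori. I expect this cross term to be the main obstacle. I would handle it with a weighted Young inequality, using \eqref{eq:pjotr} to trade $\|\lambda_\delta-\lambda^\delta\|_Y$ against $\|\lambda^\delta\|_Y$ and $e_0$, and then choose the Young weight in terms of $\varrho$; the denominator $(\varrho+\frac12)^2$ in \eqref{eq:pjotr} suggests a weight such as $\varrho+\frac12$. I would first verify the two endpoint behaviours, $K=1$ at $\varrho=0$ and $K\sim\varrho^2+\tfrac94$ as $\varrho\to\infty$. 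Only then would I carry out the optimization and check that it produces exactly $\varrho^2+\frac{9\varrho+4}{4\varrho+4}$.
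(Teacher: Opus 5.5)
Your Steps 1 and 2 already prove the theorem, with a \emph{smaller} constant than the stated one. Your judgement that Step 2 ``does not give the precise $K$'' is the only thing that goes wrong: \eqref{eq:g12} is an upper bound, so any constant $\le\varrho^2+\frac{9\varrho+4}{4\varrho+4}$ suffices.

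Concretely, Step 1 with \eqref{eq:pjotr} bounds $\|\lambda_\delta\|_Y^2+e_0^2$ by $K'(\|\lambda^\delta\|_Y^2+e_0^2)$. Here $K':=1+\varrho^2+\frac{\varrho^2}{(\varrho+\frac12)^2}$ is the larger of your two coefficients. Step 2 then bounds $\|\lambda^\delta\|_Y^2+e_0^2$ by the exact residual of any $\bar u^\delta\in X^\delta$. The one line you did not check is
\[
K'\le \varrho^2+\tfrac{9\varrho+4}{4\varrho+4}
\iff \tfrac{4\varrho^2}{(2\varrho+1)^2}\le \tfrac{5\varrho}{4(\varrho+1)}
\iff 16\varrho^2(\varrho+1)\le 5\varrho(2\varrho+1)^2 .
\]
This holds for all $\varrho\ge 0$: there is equality at $\varrho=0$, and for $\varrho>0$ it reduces to $0\le 4\varrho^2+4\varrho+5$. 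It is also consistent with your own asymptotics, since $\varrho^2+2$ is smaller than $\varrho^2+\frac94$. Step 3 is therefore unnecessary. As written it is only a plan: the Young weight is never fixed and the optimization is never carried out. If you relied on it, the proposal would be incomplete, but you do not need it.

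Your route differs from the paper's. The paper does not combine Pythagoras with the minimizing property of $u^\delta$. It works as follows:
\begin{itemize}
\item It starts from $\|\ell-Bu^\delta\|_{Y'}^2=(\ell-Bu^\delta)(\lambda^\delta)+(\ell-Bu^\delta)(\lambda_\delta-\lambda^\delta)$ and $\|\lambda^\delta\|_Y^2=(\ell-Bu^\delta)(\lambda^\delta)$.
\item It adds $K\|\lambda^\delta\|_Y^2+(K+1)e_0^2$ to both sides.
\item It bounds the cross term by \eqref{eq:pjotr} and Young's inequality with weight $\kappa=\frac{1}{2\varrho+1}$.
\item It uses the second row of \eqref{m9} with $w=\bar u^\delta-u^\delta$ to pass from $u^\delta$ to $\bar u^\delta$.
\item It applies Cauchy--Schwarz and Young, and chooses $K=\frac{\varrho(2\varrho+1)}{2}+1$ so that the $\|\lambda^\delta\|_Y^2$ terms cancel.
\end{itemize}
The factor $1+\frac{1}{(\varrho+\frac12)^2}$ in \eqref{eq:pjotr} is tuned so that the $e_0^2$ coefficients balance in exactly this computation, which produces $\varrho^2+\frac{9\varrho+4}{4\varrho+4}$.

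Your argument is shorter and sharper. It is Proposition~\ref{prop:1}'s constant combined with the observation that $\eta(\delta)^2$ is the minimum in \eqref{ls}. Hence $\eta(\delta)^2$ is at most the discrete, and therefore the exact, residual of any $\bar u^\delta$. This gives $c(\alpha)\sqrt{1+\varrho^2(1+\frac{1}{(\varrho+\frac12)^2})}$ in \eqref{eq:g12}. It also works for any weight $c\ge0$ in front of $e_0^2$ in \eqref{eq:pjotr}, yielding the constant $1+\varrho^2\max\{1,c\}$.
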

 
 \begin{proof} We will demonstrate that
 \be \label{eq:1}
 \begin{split}
 \|\ell - B u^\delta\|_{Y'}^2+&\|u_0-\gamma_0  u^\delta\|_H^2 \\
 &\leq (\varrho^2+\tfrac{9 \varrho+4}{4 \varrho+4}) \inf_{\bar{u}^{\delta} \in X^\delta}
  \|\ell - B  \bar{u}^\delta\|_{Y'}^2+\|u_0-\gamma_0 \bar{u}^\delta\|_H^2,
 \end{split}
 \ee
 so that the proof is completed by applying Theorem~\ref{thm:3}.

 In variational form, \eqref{m9} reads as solving $(\lambda^\delta,u^\delta) \in Y^\delta \times X^\delta$ from
 \begin{alignat}{4}
\label{eq:2}  (A_s \lambda^\delta)(v)+(B u^\delta)(v)&&&=\,&&\ell(v)&&\quad(v \in Y^\delta),\\
\label{eq:3}  (B w)(\lambda^\delta)+\langle u_0-\gamma_0 u^\delta,\gamma_0 w\rangle_H &&&=\,&& 0 &&\quad(w \in X^\delta).
 \intertext{Further, we have}
\label{eq:4} (A_s \lambda_\delta)(v)+(B u^\delta)(v)&&&=\,&&\ell(v)&&\quad(v \in Y).
 \end{alignat}

 The definition of $\|\bigcdot\|_Y$, and \eqref{eq:4} and \eqref{eq:2} show that
 $$
 \|\ell-Bu^\delta\|^2_{Y'}=(\ell-B u^\delta)(\lambda^\delta)+(\ell-B u^\delta)(\lambda_\delta-\lambda^\delta),\quad \|\lambda^\delta\|_Y^2=(\ell-B u^\delta)(\lambda^\delta).
 $$
Consequently, for $K>0$, we have
\begin{align*}
\|\ell&-B u^\delta\|_{Y'}^2+K \|\lambda^\delta\|_Y^2+(K+1)\|u_0-\gamma_0 u^\delta\|_H^2\\
&=(K+1)\big[(\ell-B u^\delta)(\lambda^\delta)+\|u_0-\gamma_0 u^\delta\|_H^2\big]+(\ell-B u^\delta)(\lambda_\delta-\lambda^\delta).
\end{align*}
By our assumption \eqref{eq:pjotr} and Young's inequality, for $\kappa>0$, we have
\begin{align*}
(\ell-B u^\delta)(\lambda_\delta-\lambda^\delta) & \leq \varrho \|\ell-B u^\delta\|_{Y'} \sqrt{\|\lambda^\delta\|^2_Y+(1+\tfrac{1}{(\varrho+\frac12)^2})\|u_0-\gamma_0 u^\delta\|^2_H}\\
&  \leq \varrho \big[ \kappa \|\ell-B u^\delta\|^2_{Y'}+\tfrac{1}{4 \kappa} (\|\lambda^\delta\|^2_Y+(1+\tfrac{1}{(\varrho+\frac12)^2})\|u_0-\gamma_0 u^\delta\|_H^2)\big] 
 \end{align*}
 
 By taking $\kappa=\frac{1}{2\varrho +1}$, and using \eqref{eq:3} with $w=\bar{u}^\delta-u^\delta$ for arbitrary $\bar{u}^\delta \in X^\delta$, we obtain
\begin{align*}
\tfrac{1+\varrho}{1+2\varrho} &\|\ell-B u^\delta\|_{Y'}^2+(K-\tfrac{\varrho(2\varrho+1)}{4})\|\lambda^\delta\|_Y^2+(K+1-\tfrac{\varrho(2\varrho+1)}{4}(1+\tfrac{1}{(\varrho+\frac12)^2}))\|u_0-\gamma_0 u^\delta\|_H^2\\
& \leq (K+1)\big[(\ell-B u^\delta)(\lambda^\delta)+\langle u_0-\gamma_0 u^\delta, u_0-\gamma_0 u^\delta \rangle_H \big]\\
& = (K+1)\big[(\ell-B \bar{u}^\delta)(\lambda^\delta)+\langle u_0-\gamma_0 u^\delta, u_0-\gamma_0 \bar{u}^\delta \rangle_H \big]\\
& \leq (K+1)\big[\|\ell-B \bar{u}^\delta\|_{Y'}\|\lambda^\delta\|_Y+\|u_0-\gamma_0 u^\delta\|_H \|u_0-\gamma_0 \bar{u}^\delta \|_H \big]\\
& \leq \tfrac{K+1}{2} \big[\|\ell-B \bar{u}^\delta\|_{Y'}^2 + \|\lambda^\delta\|_Y^2+\|u_0-\gamma_0 u^\delta\|_H^2+ \|u_0-\gamma_0 \bar{u}^\delta \|^2_H \big],
\end{align*}
i.e.,
\begin{align*}
\tfrac{1+\varrho}{1+2\varrho} &\|\ell-B u^\delta\|_{Y'}^2+\big(\tfrac{K-1}{2}-\tfrac{\varrho(2\varrho+1)}{4}\big)\|\lambda^\delta\|_Y^2+\big(\tfrac{K+1}{2}-\tfrac{\varrho(2\varrho+1)}{4}(1+\tfrac{1}{(\varrho+\frac12)^2})\big)\|u_0-\gamma_0 u^\delta\|_H^2\\
&  \leq \tfrac{K+1}{2} \big[\|\ell-B \bar{u}^\delta\|_{Y'}^2 + \|u_0-\gamma_0 \bar{u}^\delta \|^2_H \big],
\end{align*}
By selecting $K=\frac{\varrho(2\varrho+1)}{2}+1$, the last inequality reads as 
$$
\tfrac{1+\varrho}{1+2\varrho} \big(\|\ell-B u^\delta\|_{Y'}^2+\|u_0-\gamma_0 u^\delta\|_H^2\big) \leq \tfrac{\varrho(2\varrho+1)+4}{4} \big[\|\ell-B \bar{u}^\delta\|_{Y'}^2 + \|u_0-\gamma_0 \bar{u}^\delta \|^2_H \big].
$$
By multiplying this  equation by $\tfrac{1+2\varrho}{1+\varrho}$, we arrive at \eqref{eq:1}.
 \end{proof}
 
 A secondary benefit of imposing condition \eqref{eq:pjotr} is that it guarantees that the a posteriori estimator 
 $\eta(\delta)^2:=\|\lambda^\delta\|_{Y^\delta}^2+\|u_0-\gamma_0 u^\delta\|_H^2$ from Theorem~\ref{thm:4} is efficient \emph{and} reliable.
 
 \begin{proposition} \label{prop:1} Assuming \eqref{eq:pjotr}, it holds that
  $$
 \tfrac{1}{c(\alpha)}\eta(\delta)^2 \leq \|u-u^\delta\|_X^2 \leq c(\alpha)\big(1+\varrho^2(1+\tfrac{1}{(\varrho+\frac12)^2})\big) \eta(\delta)^2.
 $$
 \end{proposition}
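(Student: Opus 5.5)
Both bounds come from the residual characterization in Theorem~\ref{thm:3} with $\delta=\infty$, which gives
$$
\tfrac{1}{c(\alpha)}\big(\|\ell-Bu^\delta\|_{Y'}^2+\|u_0-\gamma_0u^\delta\|_H^2\big)\le \|u-u^\delta\|_X^2\le c(\alpha)\big(\|\ell-Bu^\delta\|_{Y'}^2+\|u_0-\gamma_0u^\delta\|_H^2\big).
$$
The task therefore reduces to comparing $\|\ell-Bu^\delta\|_{Y'}^2$ with $\|\lambda^\delta\|_Y^2$ (recall $\|\lambda^\delta\|_{Y^\delta}=\|\lambda^\delta\|_Y$). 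The key identity is that $A_s$ is an isometry $Y\to Y'$, so $\|\ell-Bu^\delta\|_{Y'}=\|\lambda_\delta\|_Y$ with $\lambda_\delta=A_s^{-1}(\ell-Bu^\delta)$.

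\emph{Lower bound.} By \eqref{eq:2} and \eqref{eq:4}, $\lambda^\delta$ is the $Y$-orthogonal projection of $\lambda_\delta$ onto $Y^\delta$, because $(A_s(\lambda_\delta-\lambda^\delta))(v)=0$ for $v\in Y^\delta$. Hence $\|\lambda^\delta\|_Y\le\|\lambda_\delta\|_Y$, and so $\eta(\delta)^2\le \|\ell-Bu^\delta\|_{Y'}^2+\|u_0-\gamma_0u^\delta\|_H^2\le c(\alpha)\|u-u^\delta\|_X^2$. This bound does not use \eqref{eq:pjotr}; it is just the efficiency statement of Theorem~\ref{thm:4}.

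\emph{Upper bound.} The same Galerkin orthogonality gives the Pythagorean identity $\|\lambda_\delta\|_Y^2=\|\lambda^\delta\|_Y^2+\|\lambda_\delta-\lambda^\delta\|_Y^2$. Inserting \eqref{eq:pjotr} gives
$$
\|\lambda_\delta\|_Y^2\le (1+\varrho^2)\|\lambda^\delta\|_Y^2+\varrho^2\big(1+\tfrac{1}{(\varrho+\frac12)^2}\big)\|u_0-\gamma_0u^\delta\|_H^2 .
$$
Adding $\|u_0-\gamma_0u^\delta\|_H^2$ to both sides, the coefficient of $\|u_0-\gamma_0u^\delta\|_H^2$ becomes $1+\varrho^2(1+\frac{1}{(\varrho+\frac12)^2})$. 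This dominates the coefficient $1+\varrho^2$ of $\|\lambda^\delta\|_Y^2$, so
$$
\|\ell-Bu^\delta\|_{Y'}^2+\|u_0-\gamma_0u^\delta\|_H^2\le \big(1+\varrho^2(1+\tfrac{1}{(\varrho+\frac12)^2})\big)\eta(\delta)^2 .
$$
Combining this with the upper half of Theorem~\ref{thm:3} yields the claimed reliability constant.

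No step is a genuine obstacle. The only point needing care is to justify the orthogonality $\lambda_\delta-\lambda^\delta\perp_Y Y^\delta$ by subtracting \eqref{eq:2} from \eqref{eq:4}, with $\|\cdot\|_Y$ being the $A_s$-energy norm, and then to check that the two coefficients combine to exactly the stated constant.
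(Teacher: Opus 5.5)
Your proposal is correct and follows essentially the paper's own proof: Theorem~\ref{thm:3} with $\delta=\infty$, the identity $\|\ell-Bu^\delta\|_{Y'}=\|\lambda_\delta\|_Y$, and the Pythagorean split $\|\lambda_\delta\|_Y^2=\|\lambda_\delta-\lambda^\delta\|_Y^2+\|\lambda^\delta\|_Y^2$ combined with \eqref{eq:pjotr}. Your bookkeeping of the coefficients, and the observation that the lower bound does not need \eqref{eq:pjotr}, are both accurate.
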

 \begin{proof} The proof follows from
 $$
\tfrac{1}{c(\alpha)} \|u-u^\delta\|^2_X \leq  \|\ell-B u^\delta\|_{Y'}^2+\|u_0-\gamma_0  u^\delta\|_H^2 \leq c(\alpha)\|u-u^\delta\|_X^2,
$$
 \begin{align*}
 \|\ell-B u^\delta\|_{Y'}^2+\|u_0-\gamma_0  u^\delta\|_H^2=& \|\lambda_\delta\|_Y ^2+\|u_0-\gamma_0  u^\delta\|_H^2\\
 = &\|\lambda_\delta- \lambda^\delta\|_Y^2+\| \lambda^\delta\|^2_Y+ \|u_0-\gamma_0  u^\delta\|_H^2,
 \end{align*}
 in combination with the upper bound for $\|\lambda_\delta- \lambda^\delta\|_Y^2$ provided by \eqref{eq:pjotr}.
  \end{proof}
 
 Finally in this section, we show that for any (fixed) $X^\delta$ (i.e.,~any finite-dimensional subspace of $X$) condition \eqref{eq:pjotr} is always enforced by taking $Y^\delta \supset X^\delta$ sufficiently large.

\begin{proposition} \label{prop:2} Let $(\delta_i)_{i \in \N}$ be such that $Y^{\delta_i} \subset Y^{\delta_{i+1}}$ and $\overline{\bigcup_{i \in \N} Y^{\delta_i}}=Y$. Then, for $(\ell, u_0) \in Y' \times H$ and a (fixed) $\varrho>0$, condition \eqref{eq:pjotr} is valid for  $Y^\delta  \supset X^\delta \cup Y^{\delta_i}$ when $i$ is sufficiently large.
\end{proposition}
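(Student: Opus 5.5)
The approach is to bound the left-hand side of \eqref{eq:pjotr} by a quantity that tends to zero as $i\to\infty$, and to bound the right-hand side from below uniformly in the admissible test spaces. Throughout, $X^\delta$ is fixed and finite-dimensional, and $Y^\delta$ is any closed subspace with $Y^\delta\supset X^\delta\cup Y^{\delta_i}$.

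\emph{Left-hand side.} Let $P^\delta$ be the $Y$-orthogonal projector onto $Y^\delta$, taken w.r.t.~$\langle\cdot,\cdot\rangle_Y=(A_s\cdot)(\cdot)$. Comparing \eqref{eq:2} with \eqref{eq:4} shows $\lambda^\delta=P^\delta\lambda_\delta$. Hence
\[
\|\lambda_\delta-\lambda^\delta\|_Y=\dist_Y(\lambda_\delta,Y^\delta)\leq\dist_Y(\lambda_\delta,Y^{\delta_i}).
\]
With $P_i$ the projector onto $Y^{\delta_i}$, this gives
\[
\|\lambda_\delta-\lambda^\delta\|_Y\leq \|(\identity-P_i)A_s^{-1}\ell\|_Y+\|(\identity-P_i)A_s^{-1}BE_X^\delta\|_{\cL(X^\delta,Y)}\,\|u^\delta\|_X .
\]
Since $\overline{\bigcup_i Y^{\delta_i}}=Y$ and the spaces are nested, $P_i\to\identity$ strongly. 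Because $X^\delta$ is finite-dimensional, the operator norm $\|(\identity-P_i)A_s^{-1}BE_X^\delta\|$ also tends to $0$, since strong convergence is uniform on compact sets. So the left-hand side tends to zero provided $\|u^\delta\|_X$ stays bounded uniformly over all admissible $Y^\delta$.

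\emph{Uniform stability.} Both this boundedness and the lower bound below come from one fact: $\gamma^{B_e}_\delta$ is bounded from below uniformly in admissible $Y^\delta$ once $i$ is large. First, the inf-sup quotient is monotone in the test space, so it suffices to take $Y^\delta$ replaced by $Y^{\delta_i}$ in the numerator. Next, for $w\in X^\delta$,
\[
\|{E_Y^{\delta}}'Bw\|_{{Y^{\delta}}'}=\|P^{\delta}A_s^{-1}Bw\|_Y\geq \|Bw\|_{Y'}-\|(\identity-P_i)A_s^{-1}B E_X^\delta\|\,\|w\|_X .
\]
On the finite-dimensional space $X^\delta$, the norms $\|w\|_X$ and $\sqrt{\|Bw\|^2_{Y'}+\|\gamma_0w\|_H^2}$ are equivalent by Theorem~\ref{thm:3}. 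Hence $\gamma^{B_e}_\delta\geq \tfrac12$ for all $i\geq i_0$, uniformly in $Y^\delta\supset Y^{\delta_i}$. The quasi-optimality bound from Sect.~\ref{sec:minres} then gives $\|u-u^\delta\|_X\leq 2c(\alpha)\inf_{w\in X^\delta}\|u-w\|_X$, so $\|u^\delta\|_X\leq (1+2c(\alpha))\|u\|_X$.

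\emph{Right-hand side.} If $u\in X^\delta$, then $u^\delta=u$ for any test space, so $\lambda_\delta=0$ and $\lambda^\delta=P^\delta\lambda_\delta=0$, and \eqref{eq:pjotr} holds trivially. Otherwise, the square of the right-hand side of \eqref{eq:pjotr} is at least $\varrho^2\big(\|\lambda^\delta\|_Y^2+\|u_0-\gamma_0u^\delta\|_H^2\big)$. Using $\|\lambda^\delta\|_Y=\|{E_Y^\delta}'(\ell-Bu^\delta)\|_{{Y^\delta}'}$, the definition of $\gamma^{B_e}_\delta\geq\frac12$, and Theorem~\ref{thm:3}, this is bounded below by
\[
\frac{\varrho^2}{4c(\alpha)}\|u-u^\delta\|_X^2\geq\frac{\varrho^2}{4c(\alpha)}\inf_{w\in X^\delta}\|u-w\|_X^2>0 .
\]
This bound is independent of $i$ and of $Y^\delta$. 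Since the left-hand side tends to zero, \eqref{eq:pjotr} follows for $i$ sufficiently large.

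The main obstacle is the uniform stability step, because $u^\delta$ itself changes with $Y^\delta$. The monotonicity of the inf-sup constant in the test space, combined with the finite dimensionality of $X^\delta$, is what makes the estimate uniform.
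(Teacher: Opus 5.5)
\textbf{The gap is in your lower bound for the right-hand side of \eqref{eq:pjotr}.} The constant $\gamma^{B_e}_\delta$ is an infimum over $w\in X^\delta$, so it only compares $\|{E_Y^\delta}'Bw\|_{{Y^\delta}'}$ with $\|Bw\|_{Y'}$ for $w\in X^\delta$. The residual, however, is $B_e(u-u^\delta)$, and for $u\notin X^\delta$ this is not of the form $B_e w$ with $w\in X^\delta$. What you actually claim is $\eta(\delta)^2\geq\frac14\|B_e(u-u^\delta)\|_{Y'\times H}^2$. That is a reliability bound for $\eta(\delta)$ without a data-oscillation term, and the paper notes after Theorem~\ref{thm:4} that inf-sup stability does not give this. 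A concrete counterexample: take $Y^\delta\supset X^\delta+A_s^{-1}BX^\delta$, so that $\gamma^{B_e}_\delta=1$. Set $\ell=Bw^*+\ell_2$ and $u_0=\gamma_0w^*$, with $w^*\in X^\delta$ and $0\neq A_s^{-1}\ell_2\perp_Y Y^\delta$. Then $u^\delta=w^*$, $\lambda^\delta=0$ and $\eta(\delta)=0$, yet $\|B_e(u-u^\delta)\|_{Y'\times H}=\|\ell_2\|_{Y'}>0$. In fact, since $\|\lambda_\delta\|_Y^2=\|\lambda^\delta\|_Y^2+\|\lambda_\delta-\lambda^\delta\|_Y^2$, your inequality is equivalent to $\|\lambda_\delta-\lambda^\delta\|_Y^2\leq 3\eta(\delta)^2$. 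That is a condition of the very type \eqref{eq:pjotr} you are trying to prove, so the argument is circular.

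\textbf{The repair uses only ingredients you already have, and it is how the paper concludes.} By the same Pythagoras identity and Theorem~\ref{thm:3},
\[
\|\lambda^\delta\|_Y^2+\|u_0-\gamma_0u^\delta\|_H^2=\|B_e(u-u^\delta)\|_{Y'\times H}^2-\|\lambda_\delta-\lambda^\delta\|_Y^2\geq\tfrac{1}{c(\alpha)}\inf_{w\in X^\delta}\|u-w\|_X^2-\|\lambda_\delta-\lambda^\delta\|_Y^2 .
\]
Your left-hand-side estimate gives, for $i\geq i_0$, $\|\lambda_\delta-\lambda^\delta\|_Y\leq\tau_i:=\|(\identity-P_i)A_s^{-1}\ell\|_Y+(1+2c(\alpha))\|u\|_X\|(\identity-P_i)A_s^{-1}BE_X^\delta\|_{\cL(X^\delta,Y)}$. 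This bound is uniform in the admissible $Y^\delta$ and tends to zero. Hence \eqref{eq:pjotr} holds as soon as $(1+\varrho^2)\tau_i^2\leq\frac{\varrho^2}{c(\alpha)}\inf_{w\in X^\delta}\|u-w\|_X^2$.

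The rest of your argument is sound. The estimate via $\dist_Y(\lambda_\delta,Y^{\delta_i})$, with uniform convergence of $P_i$ on the finite-dimensional range of $A_s^{-1}BE_X^\delta$, is a quantitative version of the paper's precompactness argument. Your use of $\gamma^{B_e}_\delta\geq\frac12$ with quasi-optimality to bound $\|u^\delta\|_X$ is legitimate, since there $\gamma^{B_e}_\delta$ is applied to elements of $X^\delta$. The paper gets this bound more cheaply, and for every $Y^\delta\supset X^\delta$: the least-squares property of $u^\delta$ and Theorem~\ref{thm:3} give $\|u^\delta\|_Y\leq\sqrt{c(\alpha)}\|(\ell,u_0)\|_{Y'\times H}$.
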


\begin{proof} A comparison of \eqref{eq:2} and \eqref{eq:4} shows that $\lambda^\delta$ is the Galerkin (and thus best) approximation from $Y^\delta$ to the solution $\lambda_\delta \in Y$ of
$$
A_s \lambda_\delta=\ell-B u^\delta.
$$
Although $u^\delta$ depends on $Y^\delta$, it holds that  $\|u^\delta\|_Y$ is bounded uniformly in $Y^\delta \supset X^\delta$.
Indeed, with $B_e^\delta:=({E_Y^\delta}' B,\gamma_0)\colon X^\delta \rightarrow {Y^\delta}' \times H$, an equivalent formulation of \eqref{ls} is
$\langle B_e^\delta u^\delta, B_e^\delta w\rangle_{{Y^\delta}' \times H}=\langle ({E_Y^\delta}' \ell,u_0), B_e^\delta w\rangle_{{Y^\delta}' \times H}$ ($w \in X^\delta$).
Using Theorem~\ref{thm:3}, we infer that 
$\|u^\delta\|_Y \leq \|u^\delta\|_{X^\delta} \leq \sqrt{c(\alpha)} \|B_e^\delta u^\delta\|_{{Y^\delta}' \times H} \leq \sqrt{c(\alpha)} \sqrt{\|\ell\|_{Y'}^2+\|u_0\|_H^2}$.

So, since $X^\delta$ is a finite-dimensional space, the set of these $u^\delta$ is precompact in $Y$, and, again by finite dimension, also in $X$.
Hence, also the corresponding set of solutions $\lambda_\delta=A_s^{-1}(\ell-B u^\delta)$ is precompact in $Y$. We conclude that for $Y^\delta  \supset X^\delta \cup Y^{\delta_i}$ and $i \rightarrow \infty$, it holds that $\lambda_\delta- \lambda^\delta \rightarrow 0$ in $Y$.

From $\|\lambda_\delta\|_Y=\|B(u-u^\delta)\|_{Y'}$, we have
$$
\|B_e(u-u^\delta)\|_{Y' \times H}^2-\big(\|\lambda^\delta\|_Y^2+\|u_0-\gamma_0  u^\delta\|_H^2\big)=\|\lambda_\delta-\lambda^\delta\|_Y^2  \rightarrow 0.
$$
Since, for any $Y^\delta$, $\|B_e(u-u^\delta)\|_{Y'}^2 \geq \frac{1}{c(\alpha)} \|u-u^\delta\|_X^2 \geq \frac{1}{c(\alpha)} \inf_{w \in X^\delta}\|u-w\|_X^2$,
we conclude that when $u \not\in X^\delta$, for $Y^\delta  \supset X^\delta \cup Y^{\delta_i}$ and $i$ large enough the right-hand side of \eqref{eq:pjotr} stays away from $0$, whereas its left-hand side converges to $0$, which completes the proof in this case.
Finally, if $u \in X^\delta$, then the unique solution of \eqref{eq:2}--\eqref{eq:3} is $(\lambda^\delta,u^\delta)=(0,u)$ whilst also $\lambda_\delta=0$. This means that \eqref{eq:pjotr} is valid for any $Y^\delta  \supset X^\delta$.
\end{proof}

To be able to verify \eqref{eq:pjotr}, we need an \emph{a posteriori error estimator} of the difference of $\lambda_\delta$ and its Galerkin approximation $\lambda^\delta\in Y^\delta$ in the $\|\bigcdot\|_Y$-norm.
The construction of such an estimator is the topic of the next section.  
 
 \section{A posteriori error estimator for $\|\lambda_\delta- \lambda^\delta\|_Y$} \label{sec:apost}
 \subsection{Specification of $H$, $V$, and $A$}
For some bounded Lipschitz domain $\Omega \subset \R^d$, let $H=L_2(\Omega)$, $V=H^1_0(\Omega)$, so that $Y=L_2(\Xi;H^1_0(\Omega))$. We set the Friedrichs constant 
\be  \label{eq:23}
 C_{F,\Omega}:=\sup_{0 \neq v \in H^1_0(\Omega)}\tfrac{\|v\|_{L_2(\Omega)}}{\|\nabla v\|_{L_2(\Omega)^d}}<\infty.
 \ee
Recalling that $\Sigma=\Xi \times \Omega$, we consider the case that for some $0<M=M^* \in L_\infty(\Sigma)^{d \times d}$ with $M^{-1} \in L_\infty(\Sigma)^{d \times d}$,
 \be \label{eq:22}
 (A_s w)(v)=\iint_\Sigma M \nabla_x w \cdot \nabla_x v \diff t \diff x \quad(w,v\in Y),
 \ee
 and that
 $$
 A_a \in \cL(X,L_2(\Sigma)).
 $$
 A further assumption on $A_a$ will be made in \eqref{eq:107}.
 
Recall that $\|\bigcdot\|_Y^2=(A_s\bigcdot)(\bigcdot)$, and that  $\lambda_\delta \in Y$ and $\lambda^\delta \in Y^\delta$ are the solutions of
\begin{align*} 
(A_s \lambda_\delta)(v)& = (\ell - B u^\delta)(v) \quad(v \in Y),\\ 
(A_s \lambda^\delta)(v)& = (\ell - B u^\delta)(v) \quad(v \in Y^\delta).
\end{align*}
Thanks to $X^\delta \subset Y^\delta$, for $\theta_\delta:=\lambda_\delta+u^\delta \in Y$ and $\theta^\delta:=\lambda^\delta+u^\delta \in Y^\delta$ it holds that 
\be \label{eq:103}
\lambda_\delta-\lambda^\delta=\theta_\delta-\theta^\delta,
\ee
 where, taking into account \eqref{eq:22}, with $B_a:=B-A_s=\diff_t+A_a$,
\begin{align} \label{eq:100}
\langle M \nabla_x \theta_\delta,\nabla_x v\rangle_{L_2(\Sigma)^d}& = (\ell - B_a u^\delta)(v) \quad(v \in Y),\\ \label{eq:101}
\langle M \nabla_x \theta^\delta,\nabla_x v\rangle_{L_2(\Sigma)^d}& = (\ell - B_a u^\delta)(v) \quad(v \in Y^\delta).
\end{align}

We consider $\ell \in L_2(\Sigma)$, and $(X^\delta)_{\delta \in \Delta}$ being a family of continuous finite element spaces on $\Sigma$ so that the forcing term $\ell - B_a u^\delta \in L_2(\Sigma)$.
  
 \subsection{Specification of $(X^\delta)_{\delta \in \Delta}$, $(Y^\delta)_{\delta \in \Delta}$, and an additional family $(\widecheck{Y}^\delta)_{\delta \in \Delta}$}
We will select families of subspaces of $X$ or $Y$ as finite element spaces w.r.t.~prismatic partitions of the space-time cylinder 
$\Sigma:=\Xi \times \Omega$.
 A (closed) \emph{prism} $P$ in $\overline{\Sigma}$ is a Cartesian product of an interval $I_P \subset \overline{\Xi}$ and a $d$-simplex $T_P$ in $\overline{\Omega}$.
For $p,q \in \N_0$, we set $Q_{p,q}(P):=P_p(I_P)\otimes P_q(T_P)$, and for a partition $\pria$ of $\overline{\Sigma}$ into prisms, we set $Q_{p,q}(\pria):=\{w\colon w|_P \in Q_{p,q}(P)\,(P \in \pria)\}$.

For a family of partitions $\bbP=(\pria^\delta)_{\delta \in \Delta\setminus\{\infty\}}$ of $\overline{\Sigma}$ into prisms and some $p \in \N$, we set
\be \label{eq:105}
X^\delta := C(\overline{\Sigma}) \cap L_2(\Xi;H_0^1(\Omega)) \cap Q_{p,p}(\pria^\delta),
\ee
being a subspace of $X$.
Concerning this choice, notice that when considering (uniformly) shape-regular prisms, as we will do, for general smooth solutions increasing the polynomial degree $p$ in either temporal or spatial direction cannot be expected to yield better rates than $\text{ndof}^{-\frac{p}{d+1}}$ in $X$. In other words, taking equal orders in time and space is the most reasonable choice.

To ensure that arising `data-oscillation terms' depend indeed exclusively on the data (specifically on $\ell$), and not on $u^\delta$, we add the assumption that
\be \label{eq:107}
A_a w^\delta \in Q_{p,p}(\pria^\delta)\quad(\pria^\delta \in \bbP,\,w^\delta \in X^\delta).
\ee

We consider an \emph{initial partition} $\pria^{\delta_0}$ that is the product of a partition of $\overline{\Xi}$ into subintervals, and a \emph{conforming} partition of $\overline{\Omega}$ into $d$-simplices that, for $d \geq 2$,  satisfies the \emph{matching condition} \cite{249.87} for the application of \emph{newest vertex bisection}\footnote{In \cite{64.588}, it was shown that the key results about newest vertex bisection are valid even without the matching condition. Using additional arguments, this might extend to the current setting of prismatic partitions.}.
Any prism $P$ in any partition $\pria^\delta \in \bbP$ will be created from a prism in the initial partition by (zero or more) recurrent \emph{splits} that we specify below. The number of these splits to create $P$ will be the generation $\gen(P)$ of $P$. If $\gen(P) \mod d \in \{0,\ldots, d-2\}$, then a split of $P$ consists of a (spatial) bisection of $T_P$ by cutting its \emph{refinement edge} determined by the newest vertex bisection rule, and if $\gen(P) \mod d=d-1$ it consists of such a spatial bisection \emph{combined with} a (temporal) bisection of $I_P$. 
The definition of our family $\bbP:=(\pria^\delta)_{\delta \in \Delta\setminus\{\infty\}}$ is completed by imposing the condition that any $\pria^\delta \in \bbP$ has to be \emph{spatially conforming} meaning that for a.e.~$t \in \overline{\Xi}$, $\tria_t^\delta:= \{T_P \colon P \in \pria^\delta,\,t \in I_P\}$ is a conforming simplicial partition of $\overline{\Omega}$.
This imposition of spatial conformity has the consequence that, for $d>1$, we cannot split prisms $P \in \pria^\delta$ on an individual basis. Instead, if we want to split $P \in \pria^\delta$, then we apply the following routine, which returns the coarsest refinement of $\pria^\delta$ inside $\bbP$ in which $P$ has been split (cf.~\cite[Algorithm 3]{64.588} for simplicial partitions):\vspace{1ex}

{\rm%
\begin{algotab}
\> \bf{re}\=\bf{fine}$(\pria^\delta,P)$\\
\>\>  Let $\mathcal{S}$ be the set of $P' \in \pria^\delta$ such that $|I_P \cap I_{P'}|>0$ and $T_{P'}$ contains\\
\>\>  the refinement edge $e$ of $T_P$.\\
\>\>  \texttt{if} \=$\exists P' \in \mathcal{S}$ for which the refinement edge of $T_P'$ is unequal to $e$ \texttt{then}\\
\>\>\>   \texttt{return} \bf{refine}$($\bf{refine}$(\pria^\delta,P'),P)$\\
\>\> \texttt{else} \\
\>\>\> \texttt{return} $\pria^\delta$ in which simultaneously all $P' \in \mathcal{S}$ have been split\\
\>\>  \texttt{endif} 
\end{algotab}}
see Figure~\ref{fig:3} for an illustration.
\begin{figure}[h]
\centering
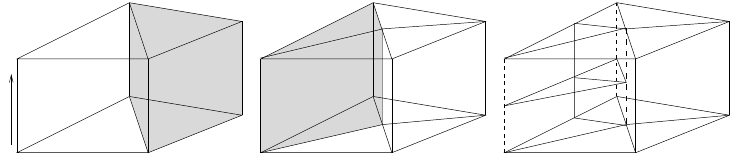
\caption{The result of two subsequent calls of refine$(\pria^\delta,P)$ for $d=2$ with the argument $P$ in grey. The midpoints of dashed lateral edges are `hanging'.}
\label{fig:3}
\end{figure}

It is well-known that with $h_{T}$ denoting the diameter of a $d$-simplex $T$, and $\rho_{T}$ the diameter of its largest inscribed ball, the newest vertex bisection procedure ensures that the uniform \emph{spatial} shape-regularity parameter
$$
\kappa_\bbP:=\sup_{\pria \in \bbP} \sup_{P \in \pria} \frac{h_{T_P}}{\rho_{T_P}}<\infty,
$$
only dependent on the initial partition of $\overline{\Omega}$ into $d$-simplices.

\begin{remark} The definition of the \emph{splits} guarantees that consequently the prisms from the partitions in $\bbP$ are uniformly space-time shape-regular. 
Alternatively, as in \cite{64.587} one could aim for parabolically scaled prisms by replacing in every $d$th split the temporal bisection by a division of the time interval into 4 subintervals (or for $d=2$, by combining every spatial bisection by a temporal bisection).  In that case all findings in this work, including those from the appendix, will remain true.
A further option would be to adaptively select between a spatial or a temporal refinement of a prism.
\end{remark}

\begin{remark} \label{rem:one-dimensional-grading}
For $d=1$, any prismatic partition of $\overline{\Sigma}$ is spatially conforming.
Nevertheless, in order to obtain prismatic partitions that, as in the case $d \geq 2$, are \emph{spatially uniformly graded}, one can impose the condition that for $P,P' \in \pria^\delta \in \bbP$ with $P \cap P' \neq \emptyset$ and $I_P \subset I_{P'}$, it holds that  $\gen(P) \leq \gen(P')+1$.
An obvious adaptation of the above refine routine will ensure this.
\end{remark}

We equip the index set $\Delta\setminus\{\infty\}$ with a \emph{partial ordering} by defining
$$
\underline{\delta} \succeq \delta \Longleftrightarrow \pria^{\underline{\delta}} \text{ is a refinement of } \pria^\delta.
$$
For $\delta \in \Delta\setminus\{\infty\}$, and some $\Delta\setminus\{\infty\} \ni \underline{\delta} \succeq \delta$, we set
\be \label{eq:106}
Y^\delta:=L_2(\Xi;H_0^1(\Omega)) \cap Q_{p,p}(\pria^{\underline{\delta}}).
\ee
Here we include the possibility that $\underline{\delta} \neq \delta$ to accommodate a possible enlargement of the test space $Y^\delta$ in the case that the a posteriori condition \eqref{eq:pjotr} for quasi-optimality of $u^\delta$ is not satisfied for $\underline{\delta} = \delta$.
\medskip

Unfortunately, with the above definition of $Y^\delta$, we are not able to construct a finite-element-type a posteriori error estimator for $\|\theta_\delta-\theta^\delta\|_Y$ that, modulo data-oscillation terms, is efficient and reliable. The difficulty is the purely anisotropic diffusion operator $-\divv_x M \nabla_x$ in combination with a partition $\pria^{\underline{\delta}}$ that is generally not decomposable into time-slabs.

Therefore, let $\widecheck{\pria}^{\underline{\delta}}$ be the coarsest prismatic refinement of  $\pria^{\underline{\delta}}$ that allows such a time-slab decomposition, meaning that it contains no prisms that have lateral edges whose midpoints are hanging (cf. Figure~\ref{fig:3}). Notice that generally $\widecheck{\pria}^{\underline{\delta}} \not\in \bbP$ because it is created from $\pria^{\underline{\delta}}$ by temporal cuts only.
In Figure~\ref{fig:4}, an illustration is given of its construction in the $d=1$ case. Notice that the prisms from the partitions $\widecheck{\pria}^{\underline{\delta}}$ are \emph{not} uniformly shape-regular.
\begin{figure}[h]
\centering
\includegraphics{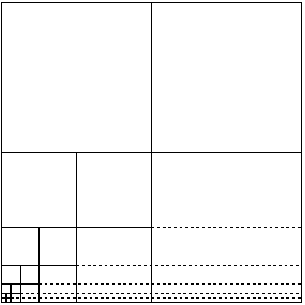}
\caption{An example of $\pria^{\underline{\delta}} \in \bbP$ and its refinement  $\widecheck{\pria}^{\underline{\delta}}$ for $d=1$.}
\label{fig:4}
\end{figure}

Setting
$$
\widecheck{Y}^\delta:=L_2(\Xi;H_0^1(\Omega)) \cap Q_{p,p}(\widecheck{\pria}^{\underline{\delta}}),
$$
similarly to $\theta_\delta$ and $\theta^\delta$ from \eqref{eq:100}-\eqref{eq:101}, we define $\widecheck{\theta}^\delta \in \widecheck{Y}^\delta$ by 
\be \label{eq:102}
\langle M \nabla_x \widecheck{\theta}^\delta,\nabla_x v\rangle_{L_2(\Sigma)^d} = (\ell - B_a u^\delta)(v) \quad(v \in \widecheck{Y}^\delta).
\ee
Using that $Y^\delta \subset \widecheck{Y}^\delta$,  we have
\be \label{eq:104}
\big(\|\lambda_\delta-\lambda^\delta\|_Y^2=\!\!\big)\,\,\|\theta_\delta-\theta^\delta\|_Y^2=\|\theta_\delta-\widecheck{\theta}^\delta\|_Y^2+\|\widecheck{\theta}^\delta-\theta^\delta\|_Y^2.
\ee
By solving \eqref{eq:102}, we can compute $\|\widecheck{\theta}^\delta-\theta^\delta\|_Y^2$. In the next subsection we develop an a posteriori error estimator for 
$\|\theta_\delta-\widecheck{\theta}^\delta\|_Y^2$.

\begin{remark} \label{rem:slabbifiedY}
Computing $\widecheck{\theta}^\delta$ requires solving independent elliptic equations for each time slab. With optimal preconditioners, $\widecheck{\theta}^\delta$ can be approximated within a sufficiently small tolerance in $\eqsim \# \widecheck{\pria}^{\underline{\delta}}$ operations. Unfortunately, $\# \widecheck{\pria}^{\underline{\delta}} / \# \pria^{\underline{\delta}}$ cannot be uniformly bounded.
Exploiting the independence of the elliptic equations, the effort can essentially be reduced to $\max_{t\in\overline\Xi} \#\widecheck{\tria}_t^\delta$ per thread on massively parallel machines.
Here, $\widecheck{\tria}_t^\delta$ denotes as before the spatial mesh defined for a.e.\ $t$.
\end{remark}

\subsection{An a posteriori error estimator for $\|\theta_\delta-\widecheck{\theta}^\delta\|_Y$}
On each time-slab, the operator $A_s$ is the tensor product of the identity operator in time and an elliptic operator in space.
Aiming at an estimator that, modulo higher order oscillation terms, is efficient and reliable, and that provides an upper bound on  $\|\theta_\delta-\widecheck{\theta}^\delta\|_Y$ whose leading term does not incorporate a generic unknown constant, we use the technique of equilibrated flux estimators \cite{33,70.8,58.95}.

As with partitions from $\bbP$, any $P \in \widecheck{\pria}^{\underline{\delta}}$ is of the form $P=I_P \times T_P$ for an interval $I_P \subset \overline{\Xi}$ and a $d$-simplex $T_P \subset \overline{\Omega}$, which is uniformly shape-regular. Specifically, $\frac{h_{T_P}}{\rho_{T_P}} \leq \kappa_\bbP$.
For a.e.~$t \in \overline{\Xi}$, $\widecheck{\tria}_t^{\underline\delta}:= \{T_P \colon P \in \widecheck{\pria}^{\underline\delta},\,t \in I_P\}$ is a conforming simplicial partition of $\overline{\Omega}$.
With $N(T)$ denoting the set of vertices of a $d$-simplex $T$, we define the set of lateral edges $\widecheck{\cE}^{\underline{\delta}}$ of $\widecheck{\pria}^{\underline{\delta}}$ by
$$
\widecheck{\cE}^{\underline{\delta}} := \{I_P \times \{\nu\} \colon P \in \widecheck{\pria}^{\underline{\delta}},\,\nu \in N(T_P)\}.
$$
We decompose $\widecheck{\cE}^{\underline{\delta}} = \widecheck{\cE}_{\text{int}}^{\underline{\delta}} \dot\cup \widecheck{\cE}_{\text{bdr}}^{\underline{\delta}}$, where
$\widecheck{\cE}_{\text{bdr}}^{\underline{\delta}}:=\{e \in \widecheck{\cE}^{\underline{\delta}}\colon e \subset \overline{\Xi} \times \partial\Omega\}$.

Any $e\in \widecheck{\cE}^{\underline{\delta}}$ is of the form $I_e \times \{\nu_e\}$, where for a.e.~$t \in I_e$, $\nu_e$ is a vertex of the conforming simplicial partition $\widecheck\tria^{\underline{\delta}}_t$ of $\overline{\Omega}$. 
The local patch $\{T \in \widecheck\tria_t^{\underline{\delta}}\colon \nu_e \in N(T)\}$ is independent of those $t$, 
and we therefore will denote it by $\widecheck\tria^{\underline{\delta}}_e$.
Let $\phi_e \in C(\overline{\Omega})$ denote the corresponding continuous piecewise linear nodal `hat'-function, i.e., $\phi_e(\nu_e)=1$ and $\phi_e=0$ outside $\omega_e:=\bigcup \{T \in \widecheck\tria_e^{\underline{\delta}}\}$.
Then, with $\psi_e:=\chi_{I_e} \otimes \phi_e$,
we have
\be \label{eq:unity}
\sum_{e \in \widecheck{\cE}^{\underline{\delta}}} \psi_e =1\quad \text{on } \overline{\Sigma},
\ee
where
$$
\supp \psi_e = \Sigma_e:=I_e \times \omega_e.
$$
The partition of unity $\{\psi_e\colon e \in \widecheck{\cE}^{\underline{\delta}}\}$ will be used to localize the residual of the approximation $\widecheck{\theta}^\delta$ for $\theta_\delta$.
For $e \in \widecheck{\cE}_{\rm int}^{\underline{\delta}}$ and $q \in P_p(I_e)$, it holds that $\psi_e(q \otimes \chi_{\omega_e}) \in Y^\delta$.

For $e \in \widecheck{\cE}^{\underline{\delta}}$, define\footnote{For $d=1$, $ \RT_{p}(T)$ should be read as $P_{p+1}(T)$.}
\begin{align*}
&\RT_{p,0}(\widecheck\tria^{\underline{\delta}}_e) := \prod_{T \in \widecheck\tria^{\underline{\delta}}_e} \RT_{p}(T)  \\
&\qquad \cap \left\{\begin{array}{ll}  \big\{\sigma \in H(\divv;\omega_e)\colon \sigma \cdot {\bf n}_{\omega_e}=0 \text{ on } \partial\omega_e\big\} & \text{when } e \in \widecheck{\cE}^{\underline{\delta}}_{\text{int}},\\
\big\{\sigma \in H(\divv;\omega_e)\colon \sigma \cdot {\bf n}_{\omega_e}=0 \text{ on } \partial\omega_e\setminus \partial\Omega\big\} & \text{when }  e \in \widecheck{\cE}^{\underline{\delta}}_{\text{bdr}}, \end{array}\right.
\end{align*}
and
$$
P_{p,*}(\widecheck\tria^{\underline{\delta}}_e):=\left\{\begin{array}{ll}  \big\{q \in \prod_{T \in \widecheck\tria^{\underline{\delta}}_e} P_{p}(T) \colon \int_{\omega_e} q \diff x=0\big\} & \text{when } e \in \widecheck{\cE}^{\underline{\delta}}_{\text{int}},\\
\prod_{T \in \widecheck\tria^{\underline{\delta}}_e} P_{p}(T)& \text{when }  e \in \widecheck{\cE}^{\underline{\delta}}_{\text{bdr}}. \end{array}\right.
$$
Let 
 $(\sigma_e^\delta,z_{e}^\delta) \in (P_p(I_e) \otimes \RT_{p,0}(\widecheck\tria^{\underline{\delta}}_e)) \times (P_p(I_e) \otimes P_{p,*}(\widecheck\tria^{\underline{\delta}}_e))$ solve
 \be \label{eq:16}
 \left\{
 \begin{array}{@{}r@{}c@{}l@{}}
 \langle M^{-1}\sigma_e^\delta, \widetilde{\sigma}\rangle_{L_2(\Sigma_e)^d}\!+\! \langle \divv_x \widetilde{\sigma}, z_{e}^\delta \rangle_{L_2(\Sigma_e)} & \,=\, & - \langle \psi_e \nabla_x \widecheck{\theta}^\delta, \widetilde{\sigma}\rangle_{L_2(\Sigma_e)^d} ,\\
 \langle \divv_x \sigma_e^\delta, \widetilde{z} \rangle_{L_2(\Sigma_e)} &\,= \,& \langle \psi_e (\ell-B_a u^\delta) - \nabla_x \psi_e\cdot M \nabla_x \widecheck{\theta}^\delta, \widetilde{z}\rangle_{L_2(\Sigma_e)}
 \end{array}
 \right.
 \ee
for all  $(\widetilde{\sigma},\widetilde{z}) \in (P_p(I_e) \otimes\RT_{p,0}(\widecheck\tria^{\underline{\delta}}_e)) \times (P_p(I_e) \otimes P_{p,*}(\widecheck\tria^{\underline{\delta}}_e))$. So
$\sigma_e^\delta$ is the minimizer of $ \|M^{-\frac12} \widetilde{\sigma}+\psi_e M^{\frac12} \nabla_x \widecheck{\theta}^\delta\|_{L_2(\Sigma_e)^d}$ over $\widetilde{\sigma}\in P_p(I_e) \otimes \RT_{p,0}(\widecheck\tria^{\underline{\delta}}_e)$, under the constraint that $\big(\divv_x \widetilde{\sigma}-\psi_e (\ell-B_a u^\delta) - \nabla_x \psi_e\cdot M \nabla_x \widecheck{\theta}^\delta \big)\perp_{L_2(\Sigma_e)}P_p(I_e) \otimes P_{p,*}(\widecheck\tria^{\underline{\delta}}_e)$.

 \begin{theorem}[Reliability] \label{thm:reliability}
With $\sigma^\delta:=\sum_{e \in \widecheck{\cE}^{\underline{\delta}}} \sigma_e^\delta$, it holds that 
$$
\|\theta_\delta -  \widecheck{\theta}^\delta \|_Y
\leq
\|M^{-\frac12} \sigma^\delta+M^{\frac12} \nabla_x  \widecheck{\theta}^\delta\|_{L_2(\Sigma)^d}+\osc(\ell,\widecheck{\pria}^{\underline{\delta}}),
$$
where
\begin{align*}
\osc(&\ell,\widecheck{\pria}^{\underline{\delta}}):=
\Big(C_{F,\Omega}\sqrt{\sum_{P \in \widecheck{\pria}^{\underline{\delta}}}  \inf_{q \in P_p(I_P) \otimes L_2(T_P)} \|\ell-q\|_{L_2(P)}^2}\\
&+\tfrac{1}{\pi} \sqrt{\sum_{P \in \widecheck{\pria}^{\underline{\delta}}} 
{\|\lambda_{\min}(M)^{-1}\|_{L_\infty(P)}}
\diam(T_P)^2  \inf_{q \in L_2(I_P) \otimes P_p(T_P)} \|\ell-q\|^2_{L_2(P)}} \,\Big).
\end{align*}
 \end{theorem}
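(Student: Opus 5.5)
The plan is to follow the standard Prager--Synge/equilibrated-flux argument, applied fiberwise in time.

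\textbf{Error representation.} Write $\|\theta_\delta-\widecheck{\theta}^\delta\|_Y=\sup_{0\neq v\in Y}\frac{\langle M\nabla_x(\theta_\delta-\widecheck{\theta}^\delta),\nabla_x v\rangle_{L_2(\Sigma)^d}}{\|M^{\frac12}\nabla_x v\|_{L_2(\Sigma)^d}}$. By \eqref{eq:100}, the numerator equals $\mathcal R(v):=\langle \ell-B_a u^\delta,v\rangle_{L_2(\Sigma)}-\langle M\nabla_x\widecheck{\theta}^\delta,\nabla_x v\rangle_{L_2(\Sigma)^d}$. Inserting $\pm\langle \sigma^\delta,\nabla_x v\rangle$ and integrating by parts in $x$ (valid since $\sigma^\delta(t,\cdot)\in H(\divv;\Omega)$ for a.e.\ $t$, as each $\sigma_e^\delta$ has zero normal trace on $\partial\omega_e$ apart from $\partial\Omega$, and $v(t)\in H^1_0(\Omega)$) gives
\[
\mathcal R(v)=-\langle M^{-\frac12}\sigma^\delta+M^{\frac12}\nabla_x\widecheck{\theta}^\delta,M^{\frac12}\nabla_x v\rangle+\langle \ell-B_a u^\delta-\divv_x\sigma^\delta,v\rangle.
\]
Cauchy--Schwarz bounds the first term by the flux estimator times $\|v\|_Y$.

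\textbf{Equilibration.} The remaining task is the second term. By \eqref{eq:unity}, $\ell-B_au^\delta=\sum_e\psi_e(\ell-B_au^\delta)$, and $\sum_e\nabla_x\psi_e=0$ (the time indicators are constant in $x$). Hence
\[
\ell-B_au^\delta-\divv_x\sigma^\delta=\sum_e r_e,\qquad r_e:=\psi_e(\ell-B_au^\delta)-\nabla_x\psi_e\cdot M\nabla_x\widecheck\theta^\delta-\divv_x\sigma_e^\delta .
\]
By \eqref{eq:16}, $r_e\perp P_p(I_e)\otimes P_{p,*}(\widecheck\tria_e)$. For interior $e$, one also needs orthogonality to $P_p(I_e)\otimes\chi_{\omega_e}$. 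This follows from \eqref{eq:102} with $v=\psi_e(q\otimes\chi_{\omega_e})\in Y^\delta\subset\widecheck Y^\delta$ together with $\int_{\omega_e}\divv_x\sigma_e^\delta\diff x=0$. So $r_e\perp P_p(I_e)\otimes P_p(\widecheck\tria_e)$ on $\Sigma_e$.

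Using \eqref{eq:107}, $u^\delta$ continuous piecewise $Q_{p,p}$, and $\widecheck\theta^\delta\in Q_{p,p}$, the quantities $B_au^\delta$, $M\nabla_x\widecheck\theta^\delta$ and $\divv_x\sigma_e^\delta$ are piecewise polynomial of degree $p$ in $t$ on each $P\in\widecheck\pria$, assuming $M$ piecewise constant; otherwise an $M$-oscillation term appears, which the statement apparently excludes. The spatial degree should also be $\le p$ after multiplication by $\phi_e$, which is why the projection is onto $P_p(T)$ times $\psi_e$-weights. This is the fiddly point to check.

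\textbf{Oscillation bounds.} I split the residual as $\sum_e r_e=(\ell-\Pi_t\ell)+(\text{terms in }P_p(I)\otimes L_2)$, where $\Pi_t$ is the $L_2(I_P)$-projection onto $P_p(I_P)$. The first part is bounded by $\|\ell-\Pi_t\ell\|_{L_2(\Sigma)}\|v\|_{L_2(\Sigma)}\le C_{F,\Omega}\|\ell-\Pi_t\ell\|\,\|\nabla_xv\|$, which after a weighting by $M$ gives the first oscillation term; I expect $\|\cdot\|_Y$ in place of $\nabla_x$, i.e.\ with $\lambda_{\min}$ absorbed. For the temporally polynomial remainder, on each $P$ it is orthogonal to $P_p(I_P)\otimes P_p(T_P)$ after summation. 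Equivalently, it equals $\ell'-\Pi_x\ell'$ for $\ell':=\Pi_t\ell$. Bound it elementwise:
\[
\langle \ell'-\Pi_x\ell',v-\Pi_x^0v\rangle_{L_2(P)}\le \|\ell'-\Pi_x\ell'\|_{L_2(P)}\,\tfrac{\diam T_P}{\pi}\,\|\nabla_xv\|_{L_2(P)},
\]
using the Payne--Weinberger constant on convex $T_P$. Then insert $\lambda_{\min}(M)^{-1}$ to pass to the $M$-norm, and use $\|\ell'-\Pi_x\ell'\|\le\|\ell-q\|$ for $q\in L_2(I_P)\otimes P_p(T_P)$, since $\Pi_t$ commutes with $\Pi_x$ and is a contraction.

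\textbf{Main obstacle.} The hardest step is rigorously showing that $\sum_e r_e$ restricted to each prism reduces exactly to $\ell-\Pi\ell$-type data-oscillation. That is, all discrete contributions lie in $P_p(I_P)\otimes P_p(T_P)$ and cancel under the local orthogonalities, so that no extra $\psi_e$-weighted terms survive. This requires careful use of the time-slab structure of $\widecheck\pria$ and of the assumption \eqref{eq:107}. Finally, add the two bounds and divide by $\|v\|_Y$.
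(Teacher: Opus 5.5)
\textbf{Verdict: same route as the paper, but the key step is left open.} You use the same error representation after integrating by parts against $\sigma^\delta$ and the same localized residuals $r_e$ (the paper's $d_e^\delta$). You also use the same two local orthogonalities: from \eqref{eq:16}, and for interior $e$ from Galerkin orthogonality. There only $\psi_e(q\otimes\chi_{\omega_e})\in\widecheck{Y}^\delta$ holds and is needed; it is generally not in $Y^\delta$, since $I_e$ can be a proper subinterval of a time interval of $\pria^{\underline{\delta}}$. Finally, you use the same splitting $(\identity-\Pi_{I_P}^p)\otimes\identity+\Pi_{I_P}^p\otimes(\identity-\Pi_{T_P}^p)$ followed by Friedrichs and Payne--Weinberger.

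However, the step you call the main obstacle is not proved, and your discussion of it rests on a misconception. You think $M\nabla_x\widecheck{\theta}^\delta$ and the $\psi_e$-weighted terms must be shown to be piecewise polynomial, which would force $M$ to be piecewise constant. The theorem has no such hypothesis (piecewise constant $M$ is only assumed for efficiency). As written, your argument would only prove a weaker statement.

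\textbf{How to close the gap with what you already have.} In the \emph{summed} residual $R:=\ell-B_au^\delta-\divv_x\sigma^\delta=\sum_e r_e$, the terms $\nabla_x\psi_e\cdot M\nabla_x\widecheck{\theta}^\delta$ cancel and the $\psi_e$ add up to one. So neither $M$ nor any $\psi_e$ appears in $R$.

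Fix $P\in\widecheck{\pria}^{\underline{\delta}}$ and $v\in Q_{p,p}(P)$, and test with $\chi_Pv$. Every $e$ with $|\Sigma_e\cap P|>0$ satisfies $P=I_e\times T$ for some $T\in\widecheck{\tria}^{\underline{\delta}}_e$; this is exactly where the time-slab structure of $\widecheck{\pria}^{\underline{\delta}}$ is used. Hence $\chi_Pv|_{\Sigma_e}\in P_p(I_e)\otimes\prod_{T\in\widecheck{\tria}^{\underline{\delta}}_e}P_p(T)$, and so $\langle R,\chi_Pv\rangle_{L_2(\Sigma)}=\sum_e r_e(\chi_Pv)=0$. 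Thus $R|_P\perp Q_{p,p}(P)$.

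Moreover, $(B_au^\delta+\divv_x\sigma^\delta)|_P\in Q_{p,p}(P)$. This follows from \eqref{eq:107}, $u^\delta|_P\in Q_{p,p}(P)$, and $\sigma_e^\delta\in P_p(I_e)\otimes\RT_{p,0}(\widecheck{\tria}^{\underline{\delta}}_e)$. Therefore $\langle R,v\rangle_{L_2(P)}=\langle(\identity-\Pi_{I_P}^p\otimes\Pi_{T_P}^p)\ell,v\rangle_{L_2(P)}$ for all $v\in Y$. From here your oscillation estimates go through unchanged, with no assumption on $M$.

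\textbf{On the first oscillation term.} Your hedge there is justified. Converting $C_{F,\Omega}\|\nabla_xv\|_{L_2(\Sigma)^d}$ into a multiple of $\|M^{\frac12}\nabla_xv\|_{L_2(\Sigma)^d}$ costs a factor $\|\lambda_{\min}(M)^{-\frac12}\|_{L_\infty(\Sigma)}$, which the stated $\osc$ omits. This is immaterial for $M=\identity$, as in the experiments.
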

 
 \begin{remark} For smooth $\ell$, $\osc(\ell,\widecheck{\pria}^{\underline{\delta}})$ is of higher order than $\|\theta_\delta -  \widecheck{\theta}^\delta \|_Y$.
 \end{remark}
 
\begin{proof}
For $v \in Y$, we set
$$
r^\delta(v):=
\langle \ell - B_a u^\delta,v\rangle_{L_2(\Sigma)}-\langle M \nabla_x \widecheck{\theta}^\delta, \nabla_x v \rangle_{L_2(\Sigma)^d}.
$$
Integration by parts shows that
\begin{align*}
 \|\theta_\delta-\widecheck{\theta}^\delta\|_Y&=\sup_{0 \neq v \in Y}   \frac{r^\delta(v)}{\|M^{\frac12}\nabla_x v\|_{L_2(\Sigma)^d}}\\
 =&\sup_{0 \neq v \in Y}   \frac{\langle \ell - B_a u^\delta-\divv_x \sigma^\delta,v\rangle_{L_2(\Sigma)}-\langle \sigma^\delta+M \nabla_x \widecheck{\theta}^\delta, \nabla_x v \rangle_{L_2(\Sigma)^d}}
{\|M^{\frac12}\nabla_x v\|_{L_2(\Sigma)^d}}\\
 \leq&  \|M^{-\frac12} \sigma^\delta+M^{\frac12} \nabla_x \widecheck{\theta}^\delta\|_{L_2(\Sigma)^d}+\sup_{0 \neq v \in Y}  
\frac{ \langle \ell - B_a u^\delta-\divv_x \sigma^\delta,v\rangle_{L_2(\Sigma)}}{\|M^{\frac12} \nabla_x v\|_{L_2(\Sigma)^d}}.
\end{align*}
It remains to show that the second term can be bounded by $\osc(\ell,\widecheck{\pria}^{\underline{\delta}})$.

For $e \in \widecheck{\cE}^{\underline{\delta}}$, and $v$ such that $\psi_e v \in Y$, we set
\be
\begin{split} \label{eq:g4}
r_e^\delta(v):\!&=r^\delta(\psi_e v)\\
& =
\langle \psi_e(\ell - B_a u^\delta) -\nabla_x \psi_e \cdot M \nabla_x \widecheck{\theta}^\delta,v\rangle_{L_2(\Sigma_e)}-\langle \psi_e M \nabla_x \widecheck{\theta}^\delta, \nabla_x v \rangle_{L_2(\Sigma_e)^d}.
\end{split}
\ee
From \eqref{eq:unity}, for $v \in L_2(\Sigma)$ we have
\be \label{eq:g10}
\begin{split}
\langle \ell - B_a u^\delta-\divv_x &\sigma^\delta,v\rangle_{L_2(\Sigma)}=\\
& \sum_{e  \in \widecheck{\cE}^{\underline{\delta}}} \underbrace{\langle \psi_e(\ell - B_a u^\delta) -\nabla_x \psi_e \cdot M \nabla_x \widecheck{\theta}^\delta-\divv_x \sigma_{e}^\delta,v\rangle_{L_2(\Sigma_e)}}_{d_e^\delta(v):=}.
\end{split}
\ee
Both $r_e^\delta(v)$ and $d_e^\delta(v)$ depend only on $v|_{\Sigma_e}$.

For any $e \in \widecheck{\cE}^{\underline{\delta}}$, the definition of $\sigma_e^\delta$ shows that
\be \label{eq:g11}
d_e^\delta|_{P_p(I_e) \otimes P_{p,*}(\widecheck\tria^{\underline{\delta}}_e)}=0,
\ee
i.e., for any $e \in \widecheck{\cE}_{\rm bdr}^{\underline{\delta}}$ it holds that $d_e^\delta|_{P_p(I_e) \otimes \prod_{T \in \widecheck\tria_e^\delta} P_{p}(T)}=0$.

For any $e \in \widecheck{\cE}_{\rm int}^{\underline{\delta}}$ and $q \in P_p(\Xi)$,
$$
\langle \psi_e M \nabla_x \widecheck{\theta}^\delta, \nabla_x (q \otimes \chi_{\omega_e}) \rangle_{L_2(\Sigma_e)^d}=0=\langle \divv_x \sigma_{e}^\delta,q \otimes \chi_{\omega_e}\rangle_{L_2(\Sigma_e)},
$$
so that
$$
d_e^\delta(q \otimes \chi_{\omega_e})=r_e^\delta(q \otimes \chi_{\omega_e})=r^\delta(\psi_e (q \otimes \chi_{\omega_e}))=0
$$
by $\psi_e (q \otimes \chi_{\omega_e})\in \widecheck Y^\delta$ and Galerkin orthogonality. In combination with \eqref{eq:g11}, we conclude that
$$
d_e^\delta|_{P_p(I_e) \otimes \prod_{T \in \widecheck\tria_e^\delta} P_{p}(T)}=0 \quad(e \in \widecheck{\cE}^{\underline{\delta}}).
$$

By \eqref{eq:g10}, we conclude that for $v|_P \in Q_{p,p}(P)$, $P\in\widecheck\pria^{\underline\delta}$,
\begin{align*}
\langle \ell-B_a u^\delta-\divv_x \sigma^\delta,v\rangle_{L_2(P)}&=
\langle \ell-B_a u^\delta-\divv_x \sigma^\delta,\chi_P v\rangle_{L_2(\Sigma)}\\
&=\sum_{e \in \widecheck{\cE}^{\underline{\delta}}} d_e^\delta( \chi_P v)=0,
\end{align*}
because for $e \in \widecheck{\cE}^{\underline{\delta}}$ either $|P\cap \Sigma_e|=0$ or $P \in I_e  \times \widecheck\tria_e^{\underline{\delta}}$. 

With $\Pi_{I_P}^p$ 
and $\Pi^{p}_{T_P}$ denoting the $L_2$-orthogonal projectors onto $P_p(I_P)$ and $P_{p}(T_P)$, respectively, for $v \in Y$ we conclude that
$$
\langle \ell-B_a u^\delta-\divv_x \sigma^\delta,v\rangle_{L_2(P)}=
\langle (\identity-\Pi_{I_P}^p \otimes \Pi^{p}_{T_P})(\ell-B_a u^\delta-\divv_x \sigma^\delta),v\rangle_{L_2(P)}.
$$

Using that for $P \in \widecheck\pria^{\underline{\delta}}$, it holds that
$(B_a u^\delta+\divv_x \sigma^\delta)|_P \in Q_{p,p}(P)$ by~\eqref{eq:107}, for $v \in Y$ we have
\be \label{eq:g1}
\begin{split}
|\langle (\identity&-\Pi_{I_P}^p \otimes \Pi^{p}_{T_P})(\ell-B_a u^\delta-\divv_x \sigma^\delta),v\rangle_{L_2(P)}|\\
&=
|\langle (\identity-\Pi_{I_P}^p \otimes \Pi^{p}_{T_P})\ell,v\rangle_{L_2(P)}|\\
&=\big|\langle \big((\identity-\Pi_{I_P}^p)\otimes \identity+\Pi_{I_P}^p\otimes (\identity-\Pi_{T_P}^{p})\big)  \ell,v \rangle_{L_2(P)}\big|\\
&\leq  \|v\|_{L_2(P)} \inf_{q \in P_p(I_P) \otimes L_2(T_P)} \|\ell-q\|_{L_2(P)}+\\
 &\quad \,\,\,\big|\langle (\Pi_{I_P}^p\otimes (\identity-\Pi_{T_P}^{p}))  (\identity \otimes (\identity-\Pi_{T_P}^{p})) \ell, (\identity \otimes (\identity-\Pi_{T_P}^{0})) v \rangle_{L_2(P)}\big|\\
&\leq  \|v\|_{L_2(P)} \inf_{q \in P_p(I_P) \otimes L_2(T_P)} \|\ell-q\|_{L_2(P)}+\\
& \quad\,\,\,\tfrac{1}{\pi} \diam(T_P)  \inf_{q \in L_2(I_P) \otimes P_p(T_P)} \|\ell-q\|_{L_2(P)} \|\nabla_x v\|_{L_2(P)^d},
\end{split}
\ee
where we used 
$\|(\Pi_{I_P}^p\otimes (\identity-\Pi_{T_P}^{p})) ( \bigcdot) \|_{L_2(P)} \leq \|\bigcdot\|_{L_2(P)}$ and the Poincar\'{e} inequality on convex domains (\cite{19.895}).

By applications of Cauchy--Schwarz inequalities, the Friedrichs inequality \eqref{eq:23}, and $\|\cdot\|_{L_2(P)^d} \leq \|\lambda_{\min}(M)^{-\frac12}\|_{L_\infty(P)}  \|M^{\frac12} \cdot\|_{L_2(P)^d}$, we conclude that
$$
\sup_{0 \neq v \in Y}  
\frac{ \langle \ell - B_a u^\delta-\divv_x \sigma^\delta,v\rangle_{L_2(\Sigma)}}{\|M^{\frac12} \nabla_x v\|_{L_2(\Sigma)^d}} \leq \osc(\ell,\widecheck{\pria}^{\underline{\delta}})
$$
which completes the proof.
\end{proof}
 
To show efficiency, for $e \in \widecheck{\cE}^{\underline{\delta}}$ we set 
$$
H_*^1(\omega_e):=\left\{
\begin{array}{ll}
\{v \in H^1(\omega_e)\colon\int_{\omega_e} v \diff x=0 \} &  \text{when } e \in \widecheck{\cE}^{\underline{\delta}}_{\text{int}},\\
\{v \in H^1(\omega_e)\colon v=0 \text{ on } \partial\omega_e \cap \partial \Omega\} & \text{when } e \in \widecheck{\cE}^{\underline{\delta}}_{\text{bdr}},
\end{array}
\right.
$$
and equip it with $\|\nabla\bigcdot\|_{L_2(\omega_e)^d}$. There exists a constant $C_{PF}$, depending only on the uniform spatial shape-regularity parameter
$\kappa_\bbP$, see \cite[p.1066 + references cited there]{70.8}, such that, either by Poincar\'e's inequality for $e \in \widecheck{\cE}^{\underline{\delta}}_{\text{int}}$, or by Friedrichs' inequality for $e \in \widecheck{\cE}^{\underline{\delta}}_{\text{bdr}}$, 
\be \label{eq:52}
\|\bigcdot\|_{L_2(\omega_e)} \leq C_{PF} \diam(\omega_e) \|\nabla \bigcdot\|_{L_2(\omega_e)^d} \quad\text{on }H_*^1(\omega_e).
\ee
 
\begin{theorem}[local efficiency] \label{thm:efficiency}
Let $d \in \{1,2,3\}$.
There exist constants $C_1, C_2$, only dependent on the uniform spatial shape-regularity parameter $\kappa_\bbP$, such that
for $M$ being piecewise constant w.r.t.~$\widecheck{\pria}^{\underline{\delta}}$, it holds for all $P\in\widecheck\pria^{\underline\delta}$ that
\begin{align*}
& \|M^{-\frac12} \sigma^\delta+M^{\frac12} \nabla_x \widecheck{\theta}^\delta\|_{L_2(P)^d} \leq \\
& C_1 \hspace*{-1.2em}\sum_{\{e \in \widecheck{\cE}^{\underline{\delta}}\colon |\Sigma_e \cap P|>0 \}}\hspace*{-1.5em}
\|\lambda_{\min}(M)^{-\frac12}\|_{L_\infty(\Sigma_e)}  \Big[
(1\!+\!C_2) \|\lambda_{\max}(M)^{\frac12}\|_{L_\infty(\Sigma_e)} \|M^{\frac12} \nabla_x(\theta_\delta\!-\!\widecheck{\theta}^\delta)\|_{L_2(\Sigma_e)^d}\\
 &\hspace*{13em}+
  \tfrac{1}{\pi} \sqrt{ \sum_{T \in \widecheck\tria_e^{\underline{\delta}}} \diam(T)^2\inf_{q \in L_2(I_e)\otimes P_{p}(T)}\|\ell-q\|_{L_2(I_e \times T)} ^2}\,\,
 \Big].
 \end{align*}
\end{theorem}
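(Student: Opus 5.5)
We follow the standard equilibrated-flux efficiency argument of Braess--Pillwein--Schöberl / Ern--Vohralík, applied patchwise on each $\Sigma_e = I_e\times\omega_e$. The triangle inequality gives $\|M^{-\frac12}\sigma^\delta+M^{\frac12}\nabla_x\widecheck\theta^\delta\|_{L_2(P)^d}\le \sum_{e} \|M^{-\frac12}\sigma_e^\delta+\psi_e M^{\frac12}\nabla_x\widecheck\theta^\delta\|_{L_2(\Sigma_e)^d}$, where we write $M^{\frac12}\nabla_x\widecheck\theta^\delta=\sum_e\psi_e M^{\frac12}\nabla_x\widecheck\theta^\delta$ by \eqref{eq:unity}. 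The sum runs over $e$ with $|\Sigma_e\cap P|>0$, and their number is bounded in terms of $\kappa_\bbP$. It therefore suffices to bound each local term.

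\textbf{Step 1: discrete stability of the local mixed problem.} Since $M$ is piecewise constant, we pull out $\|\lambda_{\min}(M)^{-\frac12}\|_{L_\infty(\Sigma_e)}$ and reduce to bounding $\|\sigma_e^\delta+\psi_e M\nabla_x\widecheck\theta^\delta\|_{L_2(\Sigma_e)^d}$, using the minimization characterization of $\sigma_e^\delta$ below \eqref{eq:16}. We exploit the tensor structure $P_p(I_e)\otimes(\cdot)$. Expanding in an $L_2(I_e)$-orthonormal Legendre basis $\{q_j\}_{j=0}^p$ (and $\psi_e=\chi_{I_e}\otimes\phi_e$ with $M$ constant in time on each prism of $\Sigma_e$, being piecewise constant on $\widecheck\pria^{\underline\delta}$ which is a time-slab refinement), problem \eqref{eq:16} decouples into $p+1$ purely spatial mixed problems on $\omega_e$. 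Each has data given by the $q_j$-moments of $\widecheck\theta^\delta$ and of $\ell-B_au^\delta$. The compatibility condition (mean zero for interior $e$) holds by Galerkin orthogonality of $\widecheck\theta^\delta$ tested with $\psi_e(q_j\otimes\chi_{\omega_e})\in\widecheck Y^\delta$, exactly as in the proof of Theorem~\ref{thm:reliability}.

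For each spatial problem, the known polynomial-degree-robust stability result of Braess--Pillwein--Schöberl ($d=2$) and Ern--Vohralík ($d=3$), valid for $d\le3$, bounds the minimal discrete flux by a constant $C_1(\kappa_\bbP)$ times the continuous minimum. That continuous minimum is the dual norm
\[
\sup_{0\ne v\in H^1_*(\omega_e)} \frac{r_{e,j}(v)}{\|\nabla v\|_{L_2(\omega_e)^d}},
\]
where $r_{e,j}(v):=r_e^\delta(q_j\otimes v)$ up to the $M$-weighting. This step is the main obstacle: one has to check that the temporal decoupling is exact and that the cited $p$-robust result applies with anisotropic (piecewise constant) $M$, which is the source of the factor $\lambda_{\max}$. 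Summing over $j$ with Parseval in time gives
\[
\|\cdot\|\lesssim \sup_{v\in P_p(I_e)\otimes H^1_*(\omega_e)}\frac{r_e^\delta(v)}{\|\nabla_x v\|_{L_2(\Sigma_e)^d}}.
\]

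\textbf{Step 2: bound the localized residual.} For such $v$, write $r_e^\delta(v)=r^\delta(\psi_e v)$. Split $r^\delta=\langle M\nabla_x(\theta_\delta-\widecheck\theta^\delta),\nabla_x\cdot\rangle$, using \eqref{eq:100}. This gives
\[
|r_e^\delta(v)|\le \|\lambda_{\max}(M)^{\frac12}\|_{L_\infty(\Sigma_e)}\|M^{\frac12}\nabla_x(\theta_\delta-\widecheck\theta^\delta)\|_{L_2(\Sigma_e)^d}\,\|\nabla_x(\psi_e v)\|_{L_2(\Sigma_e)^d}.
\]
Then $\|\nabla_x(\psi_e v)\|\le \|\nabla_x v\|+\|\nabla\phi_e\|_\infty\|v\|_{L_2(\Sigma_e)}\le(1+C_2)\|\nabla_x v\|$, by \eqref{eq:52} and $\|\nabla\phi_e\|_\infty\diam(\omega_e)\lesssim_{\kappa_\bbP}1$. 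The residual definition already contains the exact $\ell$, so no extra data term is needed here.

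\textbf{Step 3: the oscillation term.} The oscillation term enters because \eqref{eq:16} only imposes the divergence constraint against $P_p(I_e)\otimes P_{p,*}$. The spatial discrete stability result therefore compares with the continuous problem having data projected onto $P_p(T)$ elementwise. The difference between true and projected data is handled as in \eqref{eq:g1}: subtract the elementwise mean of $v$ and apply the convex Poincaré inequality with constant $\diam(T)/\pi$. This yields the term $\tfrac1\pi\big(\sum_T\diam(T)^2\inf_{q\in L_2(I_e)\otimes P_p(T)}\|\ell-q\|^2\big)^{1/2}$. Here $B_au^\delta$ is already piecewise $Q_{p,p}$ by \eqref{eq:107}, and the $\nabla_x\psi_e\cdot M\nabla_x\widecheck\theta^\delta$ term is piecewise polynomial. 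Collecting the estimates of Steps 1–3 with the prefactor $\|\lambda_{\min}(M)^{-\frac12}\|_{L_\infty(\Sigma_e)}$ gives the claim.
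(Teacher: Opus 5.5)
Your overall route is the paper's: patchwise triangle inequality, Legendre decoupling in time, a $p$-robust bound per temporal mode, then the localized residual and the data oscillation. However, Step 1 has a genuine gap exactly where you flag the ``main obstacle''.

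\textbf{The gap in Step 1.} After pulling out $\|\lambda_{\min}(M)^{-\frac12}\|_{L_\infty(\Sigma_e)}$ you must bound $\|\sigma_e^\delta+\psi_e M\nabla_x\widecheck{\theta}^\delta\|_{L_2(\Sigma_e)^d}$. Here $\sigma_e^\delta$ is the minimizer in the $M^{-1}$-\emph{weighted} norm. The Braess--Pillwein--Sch\"oberl and Ern--Vohral\'ik results concern the \emph{unweighted} $L_2$-minimizer and do not cover an anisotropic weight. Comparing with the unweighted minimizer after the weight has already been removed costs an extra factor $\|\lambda_{\max}(M)^{\frac12}\|_{L_\infty(\Sigma_e)}\|\lambda_{\min}(M)^{-\frac12}\|_{L_\infty(\Sigma_e)}$.

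The missing idea is to compare first. Let $\widehat{\sigma}_e^\delta$ minimize $\|\widetilde{\sigma}+\psi_e M\nabla_x\widecheck{\theta}^\delta\|_{L_2(\Sigma_e)^d}$ over the same constrained set as in \eqref{eq:16}. Writing $\|\cdot\|$ for the $L_2(\Sigma_e)^d$-norm, minimality of $\sigma_e^\delta$ in the weighted norm gives
\begin{align*}
\|M^{-\frac12}\sigma_e^\delta+\psi_e M^{\frac12}\nabla_x\widecheck{\theta}^\delta\|
&\le\|M^{-\frac12}\widehat{\sigma}_e^\delta+\psi_e M^{\frac12}\nabla_x\widecheck{\theta}^\delta\|\\
&\le\|\lambda_{\min}(M)^{-\frac12}\|_{L_\infty(\Sigma_e)}\,\|\widehat{\sigma}_e^\delta+\psi_e M\nabla_x\widecheck{\theta}^\delta\|.
\end{align*}
Now $M$ enters only through the datum $\phi_e M\nabla\widecheck{\theta}^\delta_{e,i}$. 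Because $M$ is piecewise constant, this datum is piecewise polynomial. Hence the standard unweighted $p$-robust estimate applies to each temporal mode with $C_1=C_1(\kappa_\bbP)$, and no weighted variant is needed.

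\textbf{Consequences and smaller points.} Your bookkeeping of $\lambda_{\max}$ is off. Step 1 produces no $\lambda_{\max}$. The factor $\|\lambda_{\max}(M)^{\frac12}\|_{L_\infty(\Sigma_e)}$ in the statement comes solely from the Cauchy--Schwarz estimate in your Step 2, as in the paper. If the $p$-robust step were also a source of it, it would appear twice and you would not reach the stated bound.

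The $p$-robust estimate controls the dual norm of the residual with \emph{projected} data (the paper's $\widecheck r_e^\delta$), not of $r_e^\delta$ as your Step 1 display suggests. Your Step 3 supplies the needed comparison, as in \eqref{eq:g8}.

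In that comparison the term to check is $\psi_e B_a u^\delta$, not $B_a u^\delta$. Since $\phi_e$ is affine and nonconstant on each $T$, this term has spatial degree $p+1$. So \eqref{eq:107} alone does not remove it from $(\identity-\Pi^p_{I_e}\otimes\Pi^p_T)(\cdot)$. This caveat applies equally to the paper's own \eqref{eq:g8}.
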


\begin{proof} Recall that $\sigma^\delta=\sum_{e \in \widecheck{\cE}^{\underline{\delta}}} \sigma_e^\delta$, where $\sigma_e^\delta$ minimizes $ \|M^{-\frac12} \widetilde{\sigma}+\psi_e M^{\frac12} \nabla_x \widecheck{\theta}^\delta\|_{L_2(\Sigma_e)^d}$ over $\widetilde{\sigma}\in P_p(I_e) \otimes \RT_{p,0}(\widecheck\tria^{\underline{\delta}}_e)$ with $\big(\divv_x \widetilde{\sigma}-\psi_e (\ell-B_a u^\delta) - \nabla_x \psi_e\cdot M \nabla_x \widecheck{\theta}^\delta\big)\perp_{L_2(\Sigma_e)}P_p(I_e) \otimes P_{p,*}(\widecheck\tria^{\underline{\delta}}_e)$.
Let $\widehat{\sigma}_e^\delta$ be the minimizer of $\|\widetilde{\sigma}+\psi_e M \nabla_x \widecheck{\theta}^\delta\|_{L_2(\Sigma_e)^d}$ for 
 $\widetilde{\sigma}$ running over the same set. We estimate
\be \label{eq:g6}
\begin{split}
\hspace*{-2em} \|M^{-\frac12} \sigma^\delta+&M^{\frac12} \nabla_x \widecheck{\theta}^\delta\|_{L_2(P)^d} \leq
 \sum_{\{e \in \widecheck{\cE}^{\underline{\delta}}\colon |\Sigma_e \cap P|>0 \}} \|M^{-\frac12} \sigma_e^\delta+\psi_e M^{\frac12} \nabla_x \widecheck{\theta}^\delta\|_{L_2(\Sigma_e)^d}\\
&\leq
 \sum_{\{e \in \widecheck{\cE}^{\underline{\delta}}\colon |\Sigma_e \cap P|>0 \}} \|M^{-\frac12} \widehat{\sigma}_e^\delta+ \psi_e M^{\frac12}\nabla_x \widecheck{\theta}^\delta\|_{L_2(\Sigma_e)^d}\\
 &\leq
 \sum_{\{e \in \widecheck{\cE}^{\underline{\delta}}\colon |\Sigma_e \cap P|>0 \}}  \|\lambda_{\min}(M)^{-\frac12}\|_{L_\infty(\Sigma_e)}
 \|\widehat{\sigma}_e^\delta+ \psi_e M\nabla_x \widecheck{\theta}^\delta\|_{L_2(\Sigma_e)^d}.
\end{split}
\ee
By comparing to \eqref{eq:16}, we infer that $\widehat{\sigma}_e^\delta$ is the first component of the pair $(\widehat{\sigma}_e^\delta,z_{e}^\delta) \in (P_p(I_e) \otimes \RT_{p,0}(\widecheck\tria^{\underline{\delta}}_e)) \times (P_p(I_e) \otimes P_{p,*}(\widecheck\tria^{\underline{\delta}}_e))$ that solves
$$
 \left\{
 \begin{array}{@{}r@{}c@{}l@{}}
 \langle \widehat{\sigma}_e^\delta, \widetilde{\sigma}\rangle_{L_2(\Sigma_e)^d}\!+\! \langle \divv_x \widetilde{\sigma}, z_{e}^\delta \rangle_{L_2(\Sigma_e)} & \,=\, & - \langle \psi_e M \nabla_x \widecheck{\theta}^\delta, \widetilde{\sigma}\rangle_{L_2(\Sigma_e)^d} ,\\
 \langle \divv_x \widehat{\sigma}_e^\delta, \widetilde{z} \rangle_{L_2(\Sigma_e)} &\,= \,& \langle \psi_e (\ell-B_a u^\delta) - \nabla_x \psi_e\cdot M \nabla_x \widecheck{\theta}^\delta, \widetilde{z}\rangle_{L_2(\Sigma_e)}
 \end{array}
 \right.
$$
for all  $(\widetilde{\sigma},\widetilde{z}) \in (P_p(I_e) \otimes\RT_{p,0}(\widecheck\tria^{\underline{\delta}}_e)) \times (P_p(I_e) \otimes P_{p,*}(\widecheck\tria^{\underline{\delta}}_e))$. 

Let $(\zeta_{e,i})_{i \geq 0}$ be such that for any $n$ its first $n+1$ elements form an $L_2(I_e)$-orthonormal basis for $P_n(I_e)$.
We set 
$$
\widehat{\sigma}_{e,i}^\delta:=\int_{I_e} \widehat{\sigma}_{e}^\delta(t,\bigcdot) \zeta_{e,i}(t)\diff t,\,\,
\widecheck{\theta}_{e,i}^\delta:=\int_{I_e} \widecheck{\theta}_{e}^\delta(t,\bigcdot) \zeta_{e,i}(t)\diff t,\,\,
g_{e,i}^\delta:=\int_{I_e} (\ell-B_a u^\delta)(t,\bigcdot) \zeta_{e,i}(t)\diff t.
$$
Notice that for $i>p$, $\widehat{\sigma}_{e,i}^\delta=\widecheck{\theta}_{e,i}^\delta=0$ and $g_{e,i}^\delta=\int_{I_e} \ell(t,\bigcdot) \zeta_{e,i}(t)\diff t$ by \eqref{eq:107}.
We infer that for $0 \leq i \leq p$, $(\widehat{\sigma}_{e,i}^\delta,z_{e,i}^\delta) \in \RT_{p,0}(\widecheck\tria_e^{\underline{\delta}}) \times P_{p,*}(\widecheck\tria_e^{\underline{\delta}})$ solves
 \be \label{eq:72}
\left\{
 \begin{array}{@{}r@{}c@{}l@{}}
 \langle \widehat{\sigma}_{e,i}^\delta, \widetilde{\sigma}\rangle_{L_2(\omega_e)^d}\!+\! \langle \divv \widetilde{\sigma}, z_{e,i}^\delta \rangle_{L_2(\omega_e)} & \,=\, & - \langle \phi_e M \nabla \widecheck{\theta}_{e,i}^\delta, \widetilde{\sigma}\rangle_{L_2(\omega_e)^d} ,\\
 \langle \divv \widehat{\sigma}_{e,i}^\delta, \widetilde{z} \rangle_{L_2(\omega_e)} &\,= \,& \langle \phi_e 
g_{e,i}^\delta - \nabla \phi_e\cdot M \nabla \widecheck{\theta}_{e,i}^\delta, \widetilde{z}\rangle_{L_2(\omega_e)}
 \end{array}
 \right.
 \ee
for all $(\widetilde{\sigma},\widetilde{z}) \in \RT_{p,0}(\widecheck\tria_e^{\underline{\delta}}) \times P_{p,*}(\widecheck\tria_e^{\underline{\delta}})$.
Here, $M$ means $M(t,\cdot)$ for some $t$ in the interior of $I_e$ by abuse of notation.

For $0 \leq i \leq p$ and $v\in H_*^1(\omega_e)$, let 
$$
\widecheck{r}_{e,i}^\delta(v):=\sum_{T \in \widecheck\tria_e^{\underline{\delta}}}\langle \Pi^{p}_{T} (\phi_e g_{e,i}^\delta),  v \rangle_{L_2(T)}-\langle M \nabla \widecheck{\theta}_{e,i}^\delta,\nabla (\phi_e v)\rangle_{L_2(\omega_e)^d}.
$$
A celebrated result from \cite{33} for $d=2$ (cf.~\cite[Cor.~3.16]{70.8}) or \cite{70.9} for $d=3$ shows that for some constant
 $C_1>0$, only dependent on $\kappa_\bbP$ and thus in particular \emph{not} on the polynomial degree $p$, it holds that\footnote{A direct calculation shows that for $d=1$, \eqref{eq:72} has only one solution, which satisfies \eqref{eq:g5} with $C_1=1$.}
\be \label{eq:g5}
\|\widehat{\sigma}_{e,i}^\delta+\phi_e M \nabla \widecheck{\theta}_{e,i}^\delta\|_{L_2(\omega_e)^d} \leq C_1 \|\widecheck{r}_{e,i}^\delta\|_{H_*^1(\omega_e)'},
\ee
and so with $\widecheck{r}_{e}^\delta:=\sum_{i=0}^p \zeta_{e,i} \otimes \widecheck{r}_{e,i}^\delta$,
\be \label{eq:g7}
\|\sigma_e^\delta+\psi_e M \nabla_x \widecheck{\theta}_{e}^\delta\|_{L_2(\Sigma_e)^d} \leq C_1 \|\widecheck{r}_{e}^\delta\|_{P_p(I_e)\otimes H_*^1(\omega_e)'}.
\ee

The definition of $r^\delta_e(\bigcdot)=r^\delta(\psi_e \bigcdot)$ from \eqref{eq:g4}, and that of $g_{e,i}^\delta$ given above show that for $v \in P_p(I_e)\otimes H_*^1(\omega_e)$,
\be \label{eq:g8}
\begin{split}
|r_e^\delta(v)&-\widecheck{r}_{e}^\delta(v)|=\Big|\sum_{T \in \widecheck\tria_e^{\underline{\delta}}} \langle (\identity - \Pi_{I_e}^p\otimes \Pi_T^{p} ) (\psi_e \ell),v\rangle_{L_2(I_e \times T)}\Big|\\
=&\Big|\sum_{T \in \widecheck\tria_e^{\underline{\delta}}} \langle \big((\identity-\Pi_{I_e}^p)\otimes \identity+\Pi_{I_e}^p\otimes (\identity-\Pi_T^{p})\big) (\psi_e \ell ),v \rangle_{L_2(I_e \times T)}\Big|\\
=&\Big|\sum_{T \in \widecheck\tria_e^{\underline{\delta}}} \langle \big(\Pi_{I_e}^p\otimes (\identity-\Pi_T^{p})\big) (\psi_e \ell ),v \rangle_{L_2(I_e \times T)}\Big|\\
\leq & \tfrac{1}{\pi} \sqrt{\sum_{T \in \widecheck\tria_e^{\underline{\delta}}} \diam(T)^2  \inf_{q \in L_2(I_e) \otimes P_p(T)} \|\ell-q\|_{L_2(I_e \times T)}^2}\, \|\nabla_x v\|_{L_2(\Sigma_e)^d},
\end{split}
\ee
(cf.~\eqref{eq:g1}).

We estimate
\begin{align*}
r_e^\delta(v)=&r^\delta(\psi_e v)=\langle M \nabla_x (\theta_\delta-\widecheck{\theta}^\delta),\nabla_x (\psi_e v)\rangle_{L_2(\Sigma_e)^d}\\
\leq &\|\lambda_{\max}(M^{\frac12})\|_{L_\infty(\Sigma_e)}
\|M^{\frac12} \nabla_x(\theta_\delta-\widecheck{\theta}^\delta)\|_{L_2(\Sigma_e)^d}\\
& \times
(1+C_{PF} \diam(\omega_e) \|\nabla \phi_e\|_{L_\infty(\omega_e)})\|\nabla_x v\|_{L_2(\Sigma_e)^d}
\end{align*}
by \eqref{eq:52}. For a constant $C_2$
only dependent on the shape-regularity constant $\kappa_\bbP$, we have $C_{PF} \diam(\omega_e) \|\nabla \phi_e\|_{L_\infty(\omega_e)} \leq C_2$.
It follows that 
\be \label{eq:g9}
\sup_{0 \neq v \in P_p(I_e) \otimes H_*^1(\omega_e)} \frac{r_e^\delta(v)}{\|\nabla _x v\|_{L_2(\Sigma_e)^d}}
\leq 
 \|\lambda_{\max}(M^{\frac12})\|_{L_\infty(\Sigma_e)} (1+C_2) \|M^{\frac12} \nabla_x(\theta_\delta-\widecheck{\theta}^\delta)\|_{L_2(\Sigma_e)^d}.
\ee
The proof is concluded by combining \eqref{eq:g6},  \eqref{eq:g7}, \eqref{eq:g8}, and \eqref{eq:g9}. 
\end{proof}

\section{The double-adaptive loop} \label{sec:double-adaptive}
For $\delta \in \Delta\setminus\{\infty\}$, $X^\delta$ and $Y^\delta$ defined in \eqref{eq:105} and \eqref{eq:106}, and $(\lambda^\delta,u^\delta) \in Y^\delta \times X^\delta$ being the solution of \eqref{m9}, we set for $P \in \pria^\delta$
\begin{align*}
\eta_P(\delta)^2&:=\iint_P |M^{\frac12} \nabla_x \lambda^\delta|^2\diff t\diff x+\left\{\begin{array}{cl} \int_{T_P} |u_0-\gamma_0 u^\delta|^2\diff x& \text{ if } I_P \cap \{0\}\neq \emptyset,\\
0 & \text{ otherwise},\end{array}\right.
\intertext{and for $P \in \pria^{\underline{\delta}}$,}
\vartheta_P(\delta)^2&:=\iint_P |M^{-\frac12}\sigma^\delta+M^{\frac12} \nabla_x \widecheck{\theta}^\delta|^2 +|M^{\frac12} \nabla_x (\widecheck{\theta}^\delta-\theta^\delta)|^2 \diff t\diff x,
\end{align*}
where $\widecheck{\theta}^\delta$ is defined in \eqref{eq:102}, and $\sigma^\delta$ in Theorem~\ref{thm:reliability}.
Then  $\eta(\delta)$, defined in Theorem~\ref{thm:4}, satisfies
$\eta(\delta)^2=\sum_{P \in  \pria^\delta} \eta_P(\delta)^2$, and we set $\vartheta(\delta)^2:=\sum_{P \in  \pria^{\underline{\delta}}} \vartheta_P(\delta)^2$.

Using \eqref{eq:103} and \eqref{eq:104}, from Theorem~\ref{thm:reliability} it follows that $\|\lambda_\delta-\lambda^\delta\|_Y \leq \vartheta(\delta)$, modulo a higher order data-oscillation term. In our experiments, we observed that the latter term is always negligible, being on average anywhere between $8$-$12$ orders of magnitude smaller than $\vartheta(\delta)$ depending on the example, so that tacitly we ignore this term below. 

Fixing a constant $\varrho>0$, Theorem~\ref{thm:pjotr} shows then that
\emph{if}
\be \label{eq:pjotrmod}
\vartheta(\delta)^2 \leq \varrho^2(\|M^{\frac12} \nabla_x \lambda^\delta\|_{L_2(\Sigma)^d}^2+(1+\tfrac{1}{(\varrho+\frac12)^2}) \|u_0-\gamma_0 u^\delta\|_{L_2(\Omega)}^2),
\ee
\emph{then} $u^\delta$ is quasi-best in the sense that \eqref{eq:g12} is valid, and with
$$
\varrho_{\mathrm{eff}}(\delta):=\min \Big\{ \widehat\varrho \geq 0 \colon \eqref{eq:pjotrmod}\text{ holds with }\varrho \text{ replaced by }\widehat\varrho \Big\},\footnotemark
$$
\footnotetext{The right-hand of \eqref{eq:pjotrmod} is monotonically increasing as function of $\varrho$.}%
it holds that
\be \label{eq:g13}
\tfrac{1}{c(\alpha)} \eta(\delta)^2 \leq \|u-u^\delta\|_X^2 \leq c(\alpha)\big(1+\varrho_{\mathrm{eff}}(\delta)^2(1+\tfrac{1}{(\varrho_{\mathrm{eff}}(\delta)+\frac12)^2})\big) \eta(\delta)^2
\ee%
by Proposition~\ref{prop:1}.
These observations suggest the following adaptive solution method.\vspace*{0.6ex}

 {\rm%
\begin{algotab}
\> \bf{do}\=\bf{uble-adaptive solver}$(\ell,u_0, \rm{TOL})$\\
\>\> Let $\delta=\underline{\delta}=\delta_0$. Select constants $\varrho>0$ and $\varpi \in (0,1)$.\\
\>\> (I) Solve $(\lambda^\delta,u^\delta) \in Y^\delta \times X^\delta$ from \eqref{m9}.\\
\>\> \texttt{if} $\vartheta(\delta)$ does not satisfy \eqref{eq:pjotrmod}\\
\>\> \texttt{then} \= select the smallest ${\mathcal M} \subset \pria^{\underline{\delta}}$ such that $\sum_{P \in{\mathcal M}} \vartheta_P(\delta)^2 \geq \varpi \vartheta(\delta)^2$, and\\
\>\>\>  determine the smallest $\underline{\underline{\delta}} \succeq \underline{\delta}$ such that $\pria^{\underline{\underline{\delta}}} \cap {\mathcal M} =\emptyset$ (use {\bf refine}).\\
\>\>\>  Set $\underline{\delta}:=\underline{\underline{\delta}}$, and \texttt{goto} (I).\\
\>\> \texttt{elseif} $c(\alpha)\big(1+\varrho_{\mathrm{eff}}(\delta)^2(1+\tfrac{1}{(\varrho_{\mathrm{eff}}(\delta)+\frac12)^2})\big) \eta(\delta)^2 \leq \rm{TOL}^2$ \texttt{then STOP} \\
\>\>\texttt{else} \= select the smallest ${\mathcal M} \subset \pria^{\delta}$ such that $\sum_{P \in{\mathcal M}} \eta_P(\delta)^2 \geq \varpi \eta(\delta)^2$, and\\
\>\>\> determine the smallest $\tilde{\delta} \succeq \delta$ such that $\pria^{\tilde{\delta}} \cap {\mathcal M} =\emptyset$ (use {\bf refine}).\\
\>\>\> Set $\delta=\underline{\delta}:=\tilde{\delta}$, and \texttt{goto} (I).\\
\>\>  \texttt{endif} 
\end{algotab}}

\begin{remark}\label{rem:alternative-rho}
Instead of letting the number of inner test-space enrichments depend on validity of \eqref{eq:pjotrmod}, alternatively one can fix this number in advance, possibly equal to zero.
A disadvantage is that then quasi-optimality --- in the sense that for some fixed $\rho$ the produced sequence of $u^\delta$ satisfy \eqref{eq:g12} --- is not guaranteed a priori.
\end{remark}

\begin{remark}\label{rem:pjotr-benadering}
The construction of the estimator $\vartheta(\delta)$ requires the auxiliary time-slab partition $\widecheck{\pria}^{\underline{\delta}}$ and the corresponding equilibrated flux reconstructions. As observed in Remark~\ref{rem:slabbifiedY}, the size of $\widecheck{\pria}^{\underline{\delta}}$ cannot in general be bounded uniformly in terms of the size of $\pria^{\underline{\delta}}$. We therefore also consider the following simpler, but heuristic, alternative.

Let
\[
    Z^\delta \coloneqq L_2(\Xi;H_0^1(\Omega)) \cap Q_{p+1,p+1}(\pria^{\underline{\delta}}),
\]
i.e., the analogue of $Y^\delta$ on the same prismatic partition, but with polynomial degrees increased by one in both time and space. Let $\theta_Z^\delta\in Z^\delta$ be defined by
\[
    \langle M\nabla_x \theta_Z^\delta,\nabla_x v\rangle_{L_2(\Sigma)^d} = (\ell-B_a u^\delta)(v) \qquad (v\in Z^\delta).
\]
Since $Y^\delta\subset Z^\delta$, Galerkin orthogonality gives the exact
Pythagoras identity
\[
    \|\theta_\delta-\theta^\delta\|_Y^2 = \|\theta_\delta-\theta_Z^\delta\|_Y^2 + \|\theta_Z^\delta-\theta^\delta\|_Y^2 .
\]
The computable quantity
\[
    \vartheta_Z(\delta) \coloneqq \|\theta_Z^\delta-\theta^\delta\|_Y
\]
may therefore be used as a surrogate for $\|\theta_\delta-\theta^\delta\|_Y = \|\lambda_\delta-\lambda^\delta\|_Y$, provided that the higher-order Galerkin error $\|\theta_\delta-\theta_Z^\delta\|_Y$ is negligible.

This replacement avoids both the construction of $\widecheck{\pria}^{\underline{\delta}}$ and the solution of the local flux equilibration problems. On the other hand, it does not provide a guaranteed upper bound for $\|\theta_\delta-\theta^\delta\|_Y$, and the resulting criterion should therefore be regarded as empirical. Moreover, the space $Z^\delta$ can contain substantially more degrees of freedom than $Y^\delta$, and the associated problem does not in general decompose into independent slabwise elliptic solves. The numerical experiments below investigate whether this higher-order surrogate nevertheless provides a useful practical alternative to the certified estimator $\vartheta(\delta)$.
\end{remark}

\section{Numerical results}
\label{sec:numerics}

In this section we present numerical experiments for the heat equation on the space-time cylinder $\Sigma=\Xi\times\Omega$. 
That is, we consider \eqref{eq:33} with $A=A_s$, so that $c(\alpha)=1$, and $(A_s w)(v)=\iint_\Sigma \nabla_x w \cdot \nabla_x v\diff t \diff x$. The discrete solution is computed from the saddle-point formulation \eqref{m9}. The purpose of the experiments is to compare uniform and adaptive refinement, and to investigate the behaviour of the test-space refinement mechanism for solutions with limited regularity.

The trial spaces $X^\delta$ and test spaces $Y^\delta$ are those from \eqref{eq:105} and \eqref{eq:106}. The refinement procedure is the one described in Section~\ref{sec:apost}. In dimensions $d>1$, the spatial refinements are generated by newest-vertex bisection. In dimension $d=1$, we impose the additional spatial grading condition from Remark~\ref{rem:one-dimensional-grading}. For the adaptive computations we use the double-adaptive loop from Section~\ref{sec:double-adaptive} with constants $\varrho=1$ and $\varpi = 0.4$.
We compare the following two refinement strategies. The runs labelled by \texttt{unif} use uniform refinement of the trial partition, and take an equal test partition. Example~\ref{ex:tensor} shows that in this case, the inf-sup stability \eqref{eq:infsup2} holds true. The runs labelled by \texttt{adap} use the double-adaptive loop from Section~\ref{sec:double-adaptive}.
To be able to compare both strategies we used also for the uniform refinement case the upper bound for $\|u-u^\delta\|_X$ from \eqref{eq:g13}, which requires the additional computation of $\vartheta(\delta)$.

We also tested the variant suggested in Remark~\ref{rem:alternative-rho}, in which no inner enrichments of the test space are performed. 
Since the results for this variant closely matched those for the standard adaptive runs, we omit them from the figures.

Finally, in Section~\ref{sec:numerics-estimator-diagnostics}, we monitor the higher-order surrogate from Remark~\ref{rem:pjotr-benadering}. This surrogate replaces the certified test-space estimator $\vartheta(\delta)$ --- which is based on the space corresponding to the auxiliary time-slab mesh $\widecheck\pria^{\underline\delta}$ and the local flux equilibration problems --- by the computable quantity $\vartheta_Z(\delta)$ on the higher-order space $Z^\delta$. We compare $\vartheta_Z(\delta)$ with the guaranteed upper bound $\vartheta(\delta)$. We also inspect the ratio between the two parts of $\vartheta(\delta)$: the slab error term $\|\theta^\delta - \widecheck{\theta}^\delta\|_Y$ and the flux error term $\|M^{-\frac{1}{2}}\sigma^\delta + M^{\frac{1}{2}}\nabla_x \widecheck{\theta}^\delta\|_{L_2(\Sigma)^d}$.

Throughout the experiments, we take
\[
    \Xi=(0,1),\qquad
    \Omega=(0,1)^d,\qquad
    d\in\{1,2\}.
\]
For $d=1$, the initial space-time mesh is obtained by subdividing the square $(0,1)^2$ into four congruent squares by splitting both coordinate directions at their midpoint.
For $d=2$, the initial spatial mesh consists of four triangles obtained by connecting the centre point $(1/2,1/2)$ to the four corners of the unit square. In this initial mesh, the longest edges are chosen as refinement edges for newest vertex bisection. The initial space-time mesh is then obtained by tensorizing this spatial mesh with the time interval $\Xi$, resulting in four triangular prisms. 

The implementation is written in \Cpp{} and is available online.\footnote{Repository link: \url{https://github.com/Rsmeets99/DoubleAdapParabolicFEM}.} 
The repository contains implementation details, input files, and the data used to generate the plots reported below. Numerical results are shown for polynomial
degrees
\[
    p=1,2,3,
\]
although the implementation is not restricted to these values.

As a consistency check, we also performed standard manufactured-solution tests with smooth data under both uniform and adaptive refinement, for $d=1$ and $d=2$ and for all polynomial degrees considered here. The observed convergence rates agree with the expected optimal rates. Since these tests primarily serve to validate the implementation, they are not included below. Instead, we focus on examples in which the solution has reduced regularity due either to an incompatibility between the initial and boundary data, or to reduced regularity of the initial datum.
The examples are based on the benchmark problems considered in~\cite{75.291}, where a least-squares method is investigated. 

\subsection{Experiments in \(1+1\) dimensions}
\label{subsec:experiments-1d}

We first consider the case $d=1$, so that $\Omega=(0,1)$. Homogeneous Dirichlet boundary conditions are imposed at the endpoints of the interval.

\subsubsection{Non-matching initial datum}
\label{subsubsec:1d-incompatible-initial-datum}

The first one-dimensional experiment uses a non-matching initial datum. We take 
\[
    \ell = 2, \qquad
    u_0(x) = 1 .
\]
The initial datum is admissible as an element of $L_2(\Omega)$, but it is incompatible with the homogeneous Dirichlet boundary condition at $x=0$ and $x=1$. Consequently, the solution is discontinuous at the points $(t,x)=(0,0)$ and $(t,x)=(0,1)$.

Figure~\ref{fig:non-matching-conv-1D} shows the convergence rates of the guaranteed a posteriori upper bound $\sqrt{\big(1+\varrho_{\mathrm{eff}}(\delta)^2(1+\tfrac{1}{(\varrho_{\mathrm{eff}}(\delta)+\frac12)^2})\big)}\,\eta(\delta)$ for $\|u-u^\delta\|_X$ from \eqref{eq:g13}
 against the degrees of freedom of $X^\delta$. The rates appear to be independent of the polynomial degree $p$: they are $0.12$ for the uniform refinement and approximately $0.5$ for the adaptive refinement. Note that for a smooth solution, the rate would be $p/2$. A snapshot of the mesh for adaptive refinement is given in Figure~\ref{fig:non-matching-mesh-1D}, where clear refinement is seen towards the singular corners at $(t,x) = (0,0)$ and $(t,x) = (0,1)$. These results match the findings in \cite{75.291}, with a slightly increased rate for the adaptive $p=1$ case, and adding results for $p=2,3$. 
 
For the adaptive algorithm we find for $\varrho_{\rm eff}(\delta)$ passing the test \eqref{eq:pjotrmod} an average value of $0.70$, $0.57$\, and $0.55$ for $p=1,2,3$, respectively.
On average, the amount of inner test space enrichments required per outer trial space enrichment was $0.1$, $0$, and $0.1$ for $p=1,2,3$\, respectively, and the ratios $\#Y^\delta / \#X^\delta$ were found to be $2.00$, $1.47$\, and $1.33$ for $p=1,2,3$\, respectively. These ratios are close to $(p+1)/p$, 
being the asymptotic value of these quantities when $Y^\delta$ and $X^\delta$ are the corresponding finite element spaces w.r.t.~the same uniform prismatic partition.

For the variant discussed in Remark~\ref{rem:alternative-rho} without inner test-space enrichments, the average value of $\varrho_{\rm eff}(\delta)$ was 
$0.75$, $0.57$\, and $0.65$ for $p=1,2,3$\, respectively, with maximum value $1.25$.

\subsubsection{Non-smooth initial datum}
\label{subsubsec:1d-boundary-singularity}

The second one-dimensional experiment uses an initial datum with reduced regularity. We take
\[
    \ell = 0, \qquad
    u_0(x) = x^{1/2}(1-x).
\]
This initial datum satisfies the homogeneous Dirichlet boundary condition. Its derivative is singular at $x=0$, which reduces the regularity of the corresponding solution. Thus, unlike in the previous example, the loss of regularity is not caused by a mismatch between the initial and boundary data, but by the non-smoothness of 
 the initial datum itself.

Figure~\ref{fig:reduced-boundary-conv-1D} shows the convergence rates of the a posteriori upper bound against the degrees of freedom of $X^\delta$. The rates are $0.26$ for the uniform refinement and $0.5$, $0.88$ and $0.93$ for the adaptive refinement for $p =1,2,3$ respectively. A snapshot of the mesh for adaptive refinement is given in Figure~\ref{fig:reduced-boundary-mesh-1D}, where clear refinement is seen towards the reduced-regularity corner at $(t,x) = (0,0)$. These results match the findings in \cite{75.291} with the addition of results for $p=2,3$.

For the adaptive algorithm we find for $\varrho_{\rm eff}(\delta)$ passing the test \eqref{eq:pjotrmod} an average value of $0.67$, $0.64$\, and $0.67$ for $p=1,2,3$\, respectively. 
On average\, the amount of inner iterations required per outer iteration was $0.6$, $0.2$\, and $0.3$ for $p=1,2,3$\, respectively, and the ratios $\#Y^\delta / \#X^\delta$ were found to be $2.5$, $1.57$ and $1.47$ for $p=1,2,3$ respectively.

For the variant discussed in Remark~\ref{rem:alternative-rho} without inner test-space enrichments, the average value of $\varrho_{\rm eff}(\delta)$ was 
$1.26$, $0.91$\, and $1.02$ for $p=1,2,3$\, respectively, with maximum value $3.14$.

\begin{figure}[htbp]
    \centering
    \begin{subfigure}[t]{0.49\textwidth}
        \centering
        \includegraphics[width=\linewidth]{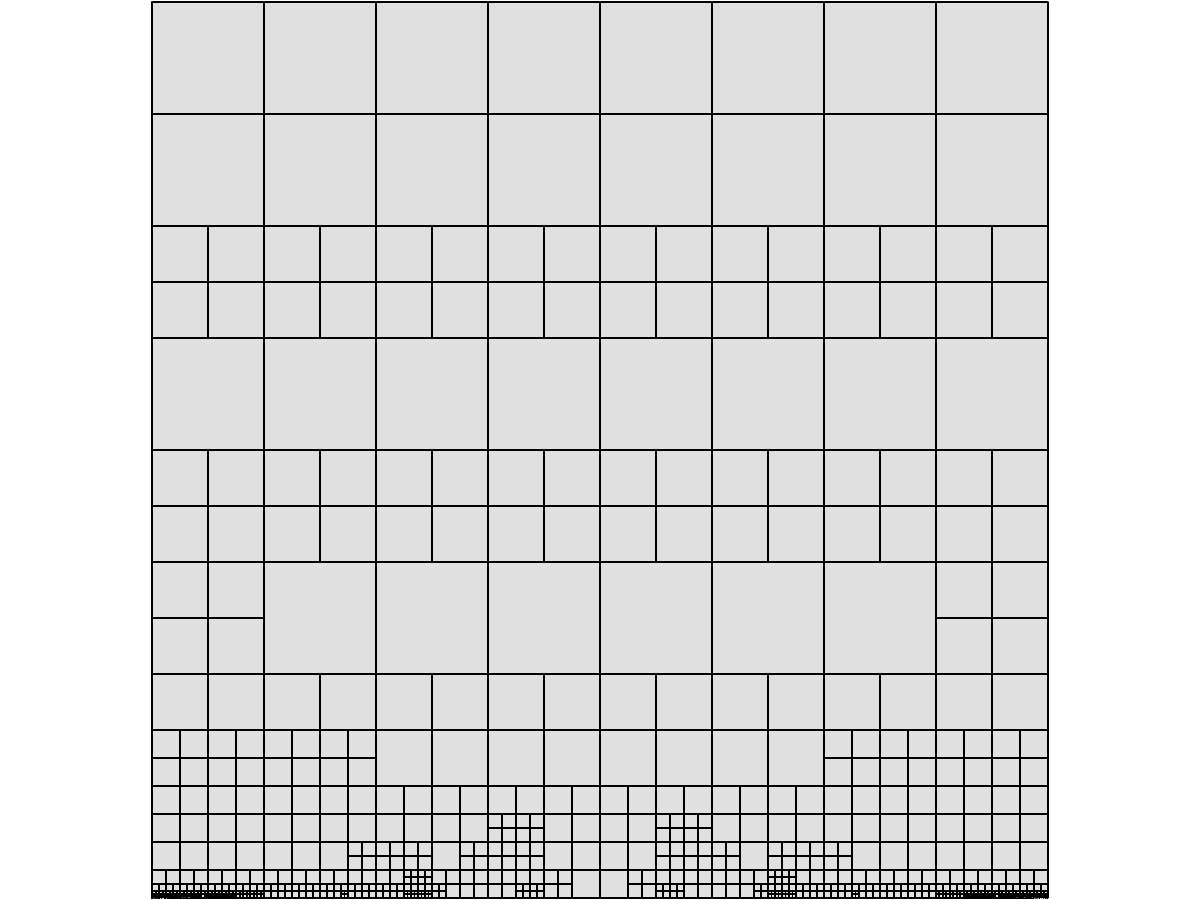}
        \caption{Non-matching initial datum}
        \label{fig:non-matching-mesh-1D}
    \end{subfigure}
    \hfill
    \begin{subfigure}[t]{0.49\textwidth}
        \centering
        \includegraphics[width=\linewidth]{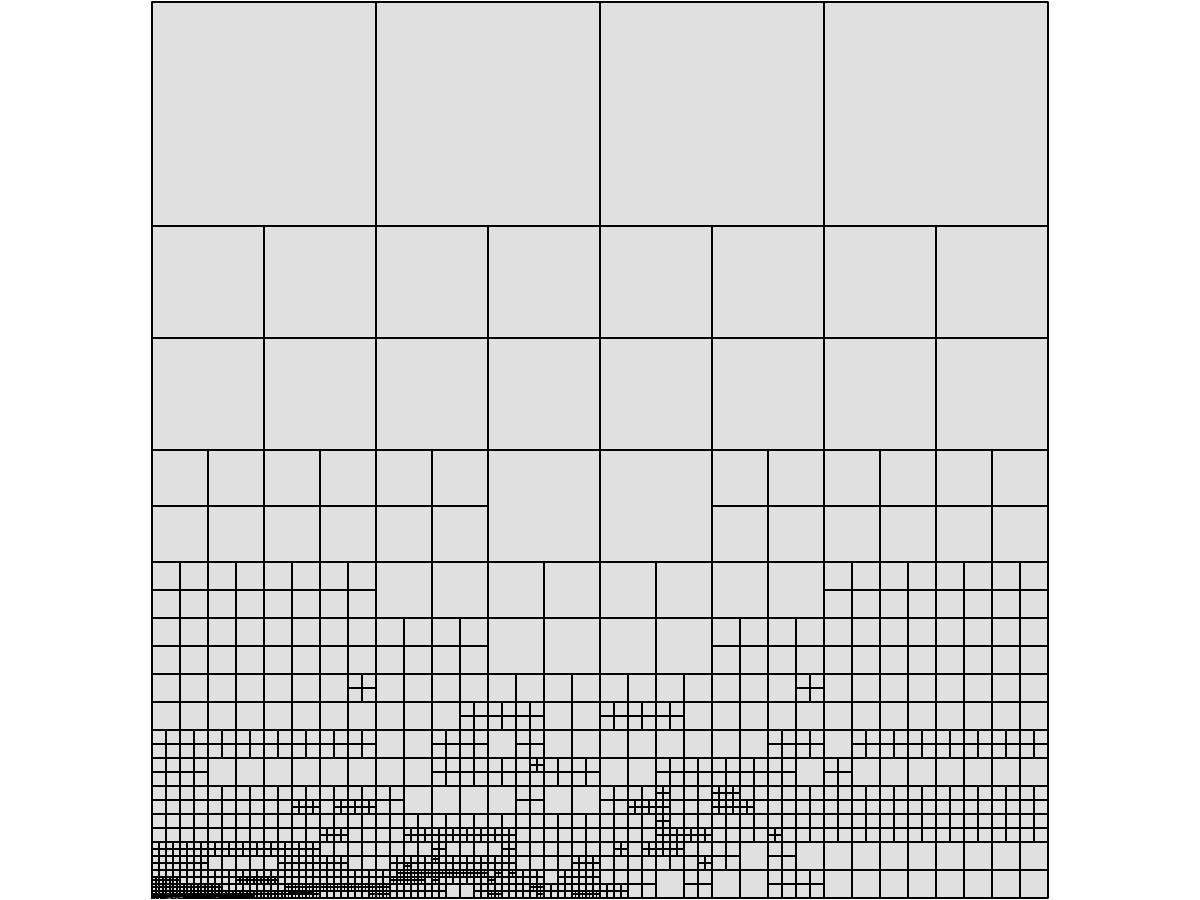}
        \caption{Non-smooth initial datum}
        \label{fig:reduced-boundary-mesh-1D}
    \end{subfigure}
    \caption{Snapshots of the meshes belonging to $X^\delta$ with approximately 2000 degrees of freedom for $d = 1$ and $p = 1$.}
    \label{fig:meshes-1D}
\end{figure}

\begin{figure}[htbp]
    \centering
    \begin{subfigure}[t]{0.8\textwidth}
        \centering
        \includegraphics[width=\linewidth]{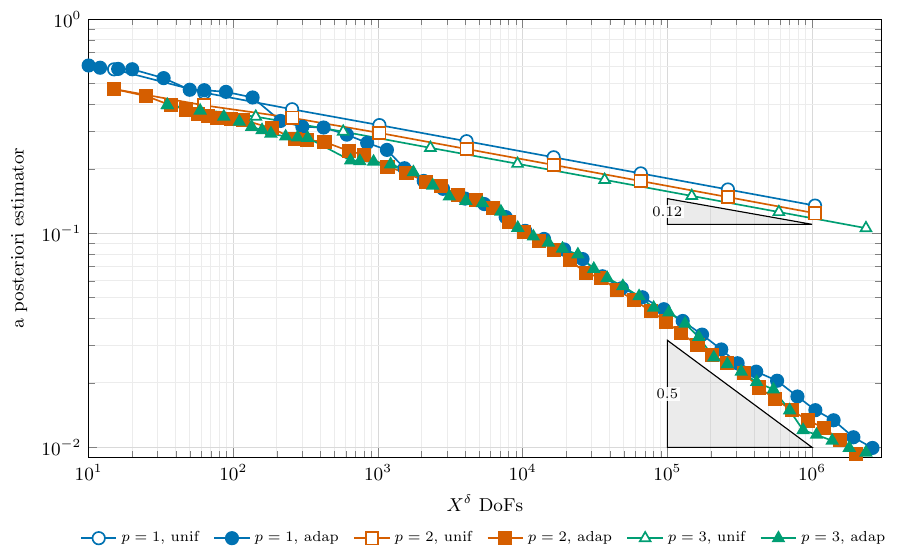}
        \caption{Non-matching initial datum}
        \label{fig:non-matching-conv-1D}
    \end{subfigure}
    \vfill
    \begin{subfigure}[t]{0.8\textwidth}
        \centering
        \includegraphics[width=\linewidth]{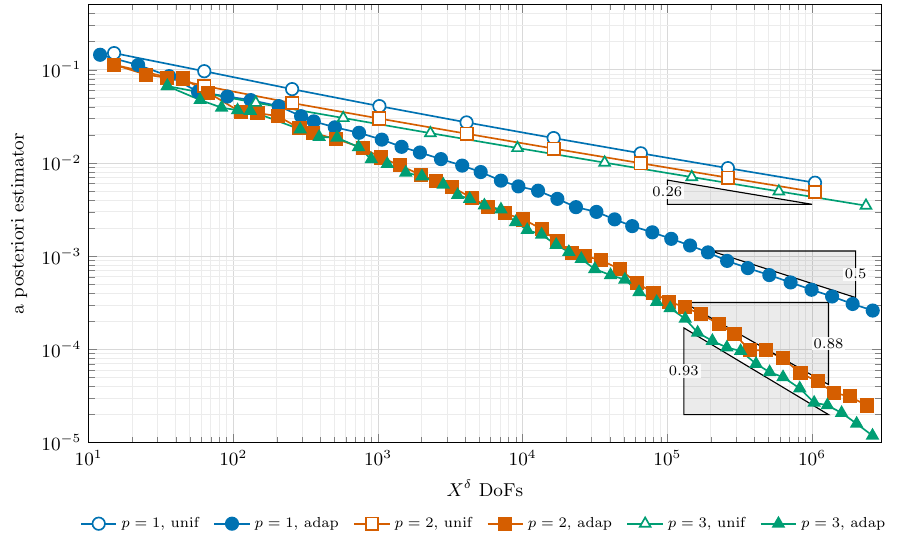}
        \caption{Non-smooth initial datum}
        \label{fig:reduced-boundary-conv-1D}
    \end{subfigure}
    \caption{Convergence rates for the $d = 1$ examples.}
    \label{fig:conv-rates-1D}
\end{figure}

\subsection{Experiments in \(2+1\) dimensions}
\label{subsec:experiments-2d}

We next consider the $d=2$ case, with $\Omega=(0,1)^2$. Homogeneous Dirichlet boundary conditions are imposed on $\partial\Omega$.

\subsubsection{Non-matching initial datum}
\label{subsubsec:2d-incompatible-initial-datum}

The two-dimensional experiment with non-matching initial datum is given by
\[
    \ell = 0, \qquad
    u_0(x,y) = 1 .
\]
As in the one-dimensional case, the initial datum does not match the homogeneous Dirichlet boundary condition. The incompatibility is now present along the full spatial boundary at $t=0$. This produces reduced regularity near the parabolic boundary $\{0\}\times\partial\Omega$, and in particular near the spatial corners.

Figure~\ref{fig:non-matching-conv-2D} shows the convergence rates of the  a posteriori upper bound  against the degrees of freedom of $X^\delta$. The rates are $0.08$ for the uniform refinement and $0.14$ for the adaptive refinement. Note that for a smooth solution, the rate would be $p/3$. A snapshot of the mesh for adaptive refinement is given in Figure~\ref{fig:non-matching-mesh-2D}, where clear refinement is seen towards the singularity at $\{0\} \times \partial \Omega$. These results match the findings in \cite{75.291}, with the addition of results for $p=2,3$.

For the adaptive algorithm we find an average value for $\varrho_{\rm eff}(\delta)$ of $0.075$, $0.071$\, and $0.069$ for $p=1,2,3$\, respectively. All outer iterations required no inner iteration\, and the ratios $\#Y^\delta / \#X^\delta$ were found to be $1.91$, $1.45$\, and $1.29$ for $p=1,2,3$\, respectively.

\subsubsection{Non-smooth initial datum}
\label{subsubsec:2d-boundary-singularity}

Finally, we consider a two-dimensional initial datum with reduced regularity at a boundary edge:
\[
    \ell = 0, \qquad
    u_0(x,y) = x^{3/4}(1-x)y(1-y).
\]
The datum vanishes on $\partial\Omega$, and is therefore compatible with the homogeneous Dirichlet boundary condition. Its spatial regularity is nevertheless reduced near the boundary edge $x=0$, where the derivative in the $x$-direction is singular.

Figure~\ref{fig:reduced-boundary-conv-2D} shows the convergence rates of the  a posteriori upper bound  against the degrees of freedom of $X^\delta$. The rates are $0.27$ and approximately $0.33$ for the uniform refinement for $p=1$ and $p=2,3$ respectively and $0.33$ and $0.46$ for the adaptive refinement for $p=1$ and $p=2,3$ respectively. A snapshot of the mesh for adaptive refinement is given in Figure~\ref{fig:reduced-boundary-mesh-2D}, where clear refinement is seen towards the singularity at $\{0\} \times \{x = 0\}$. These results match the findings in \cite{75.291}, with the addition of results for $p=2,3$. 

For the adaptive algorithm we find an average value for $\varrho_{\rm eff}(\delta)$ of $0.42$, $0.22$\, and $0.22$ for $p=1,2,3$\, respectively. All outer iterations required no inner iteration\, and the ratios $\#Y^\delta / \#X^\delta$ were found to be $1.98$, $1.45$\, and $1.29$ for $p=1,2,3$\, respectively.

\begin{figure}[htbp]
    \centering
    \begin{subfigure}[t]{0.49\textwidth}
        \centering
        \includegraphics[width=\linewidth]{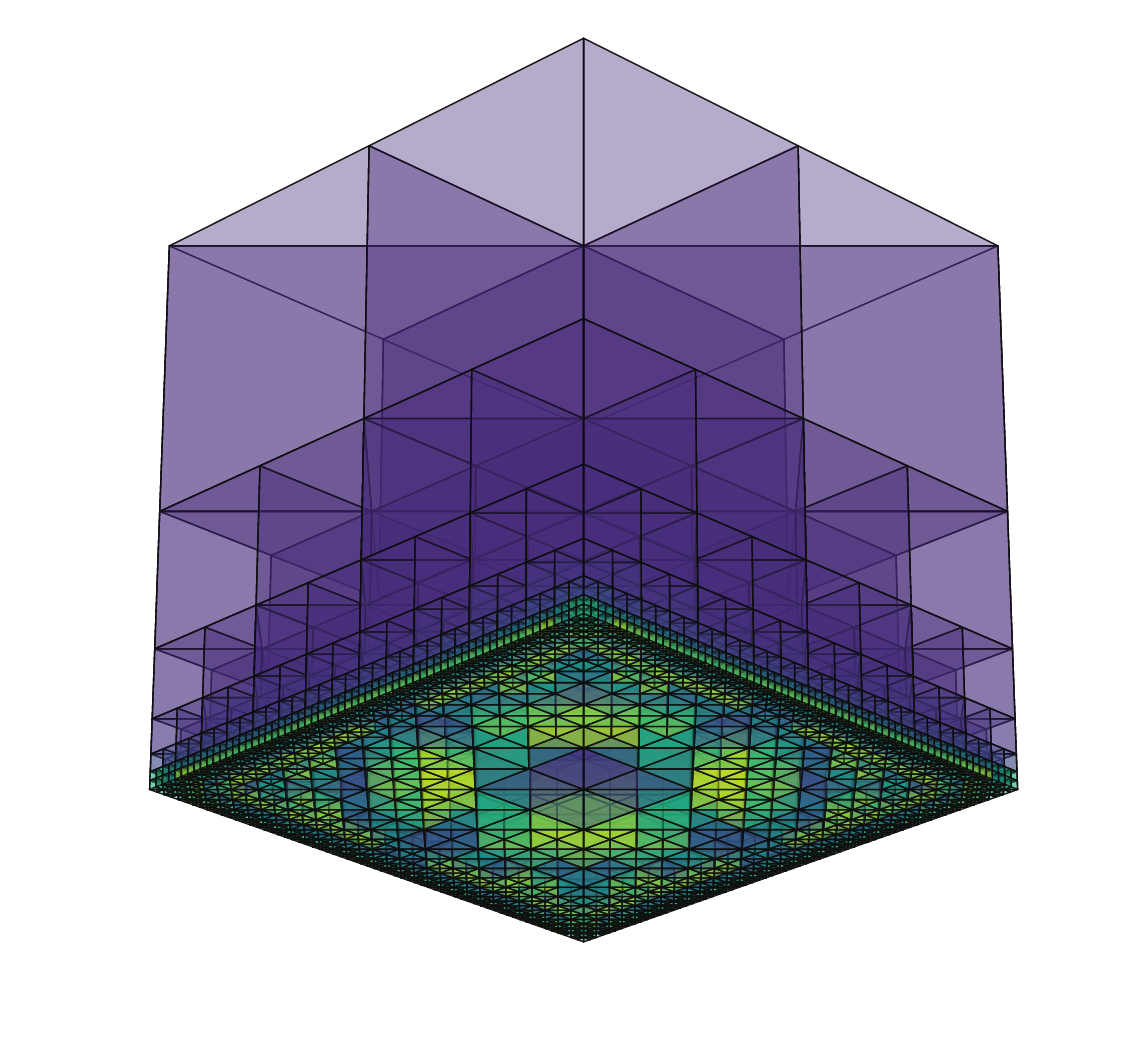}
        \caption{Non-matching initial datum}
        \label{fig:non-matching-mesh-2D}
    \end{subfigure}
    \hfill
    \begin{subfigure}[t]{0.49\textwidth}
        \centering
        \includegraphics[width=\linewidth]{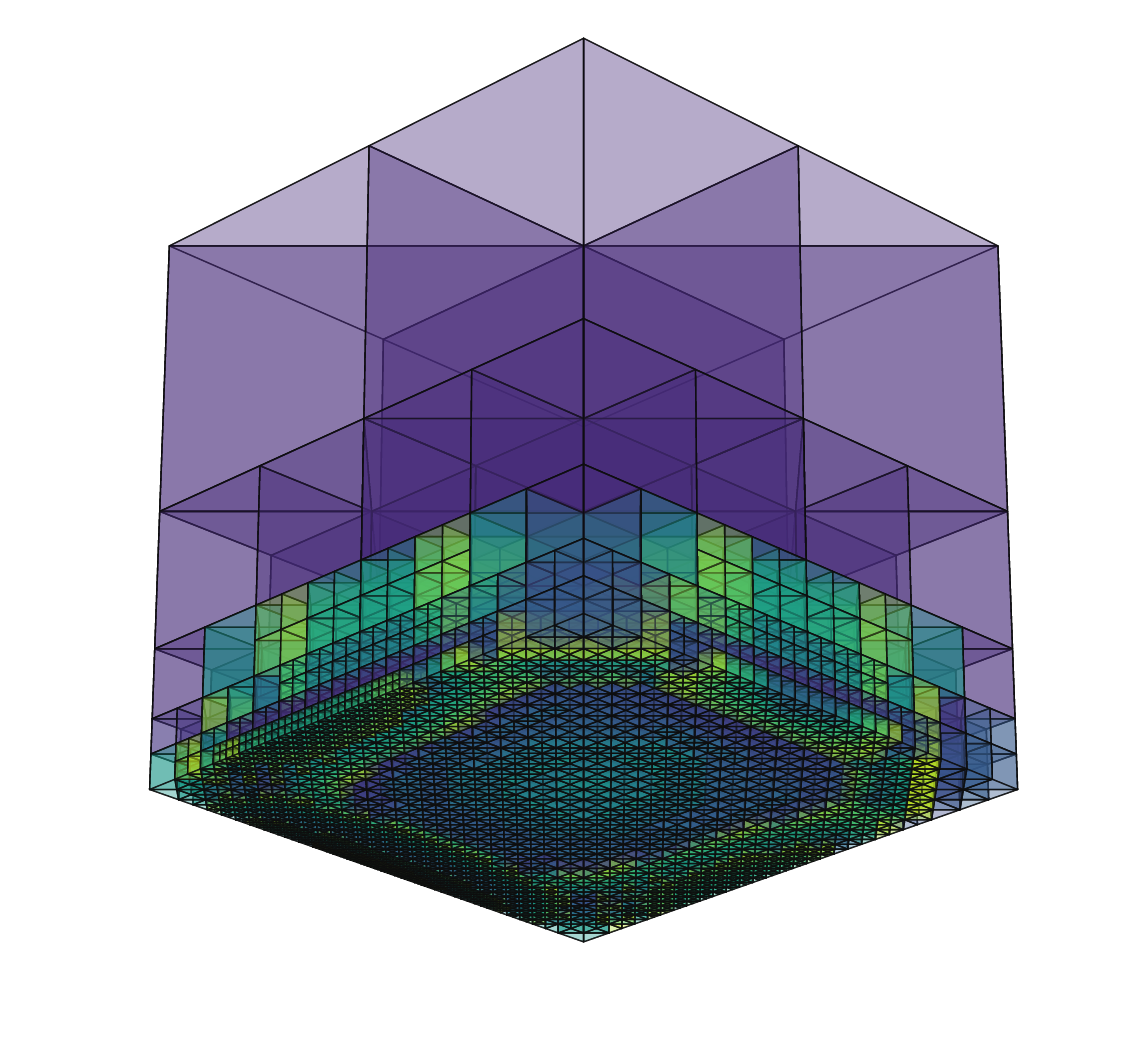}
        \caption{Non-smooth initial datum}
        \label{fig:reduced-boundary-mesh-2D}
    \end{subfigure}
    \caption{Snapshots of the meshes belonging to $X^\delta$ with approximately 4000 degrees of freedom for $d = 2$ and $p = 1$.}
    \label{fig:meshes-2D}
\end{figure}

\begin{figure}[htbp]
    \centering
    \begin{subfigure}[t]{0.8\textwidth}
        \centering
        \includegraphics[width=\linewidth]{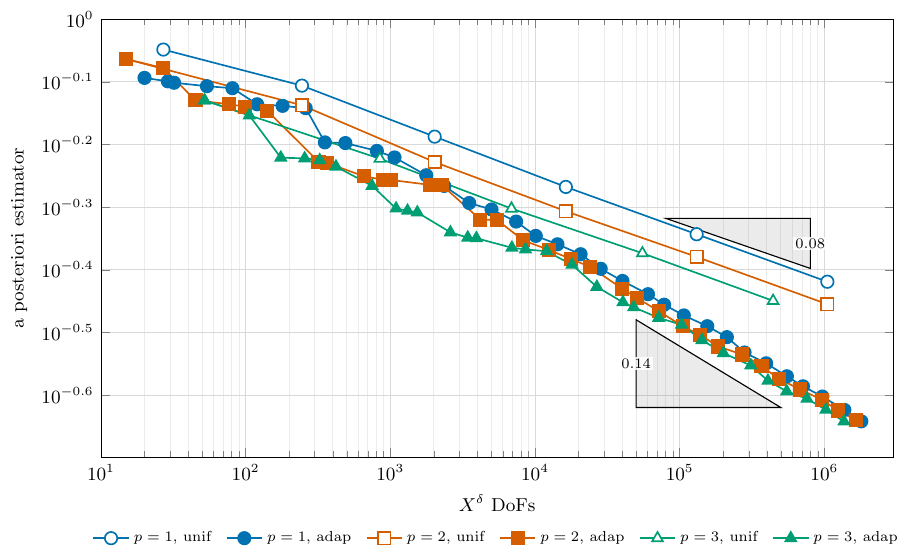}
        \caption{Non-matching initial datum}
        \label{fig:non-matching-conv-2D}
    \end{subfigure}
    \vfill
    \begin{subfigure}[t]{0.8\textwidth}
        \centering
        \includegraphics[width=\linewidth]{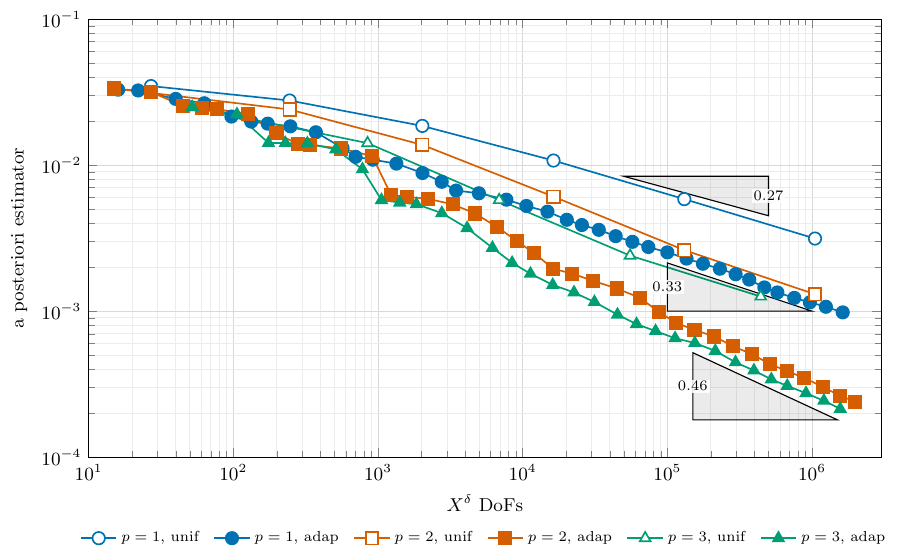}
        \caption{Non-smooth initial datum}
        \label{fig:reduced-boundary-conv-2D}
    \end{subfigure}
    \caption{Convergence rates for the $d = 2$ examples.}
    \label{fig:conv-rates-2D}
\end{figure}

\subsection{Diagnostics for the test-space estimator}
\label{sec:numerics-estimator-diagnostics}

We conclude the numerical section by comparing the certified test-space estimator $\vartheta(\delta)$ for $\|\theta_\delta-\theta^\delta\|_Y$ with the higher-order surrogate $\vartheta_Z(\delta)=\|\theta_Z^\delta-\theta^\delta\|_Y$ from Remark~\ref{rem:pjotr-benadering}. Recall that $\vartheta(\delta)$ consists of two parts: the contribution from the local flux equilibration problems, and the computable Galerkin difference caused by replacing $Y^\delta$ by the space corresponding to the auxiliary time-slab mesh $\widecheck\pria^{\underline\delta}$. We write
\[
    \vartheta_{\rm flux}(\delta) \coloneqq \|M^{-\frac12}\sigma^\delta + M^{\frac12}\nabla_x\widecheck\theta^\delta\|_{L_2(\Sigma)^d}, \qquad 
    \vartheta_{\rm slab}(\delta) \coloneqq \|\widecheck\theta^\delta-\theta^\delta\|_Y .
\]
Modulo the data-oscillation term that is ignored in the computations, this gives
\[
    \vartheta(\delta)^2 = \vartheta_{\rm flux}(\delta)^2 + \vartheta_{\rm slab}(\delta)^2 .
\]

In the plots below we report
\[
    r_Z(\delta) \coloneqq
    \frac{\vartheta_Z(\delta)}{\vartheta(\delta)}\quad\text{and}\quad
    r_{\vartheta}(\delta) \coloneqq
    \frac{\vartheta_{\rm slab}(\delta)}{\vartheta_{\rm flux}(\delta)} .
\]
The ratio $r_Z(\delta)$ measures how close the higher-order surrogate is to the certified upper bound. Since $\vartheta_Z(\delta)$ is not guaranteed to be an upper bound for $\|\lambda_\delta-\lambda^\delta\|_Y$, this ratio is used only as an empirical diagnostic. The ratio $r_{\vartheta}(\delta)$ indicates which part of the estimator $\vartheta(\delta)$ is dominant. 

\begin{figure}[htbp]
    \centering
    \begin{subfigure}[t]{0.49\textwidth}
        \centering
        \includegraphics[width=\linewidth]{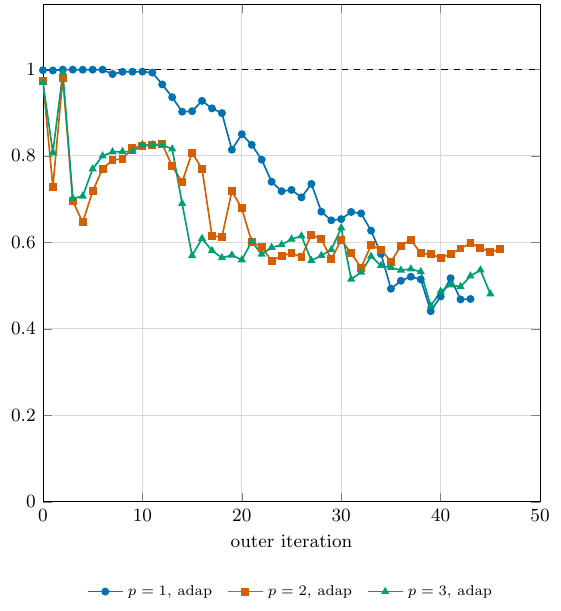}
        \caption{Ratio $r_Z(\delta)$}
        \label{fig:ratio-Z-1D}
    \end{subfigure}
    \hfill
    \begin{subfigure}[t]{0.49\textwidth}
        \centering
        \includegraphics[width=\linewidth]{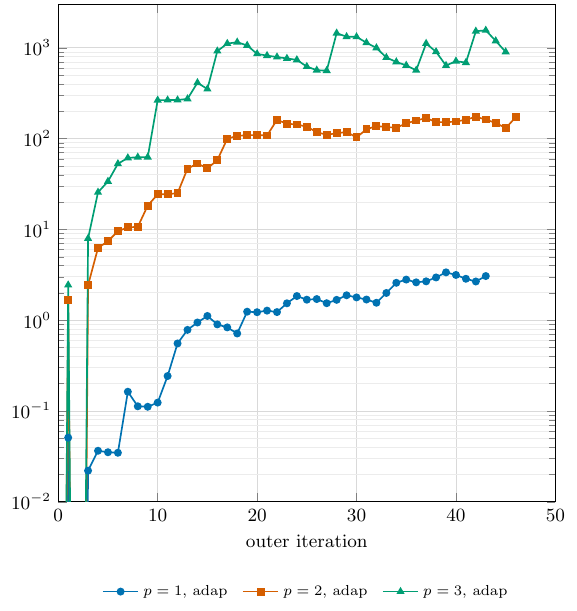}
        \caption{Ratio $r_\vartheta(\delta)$}
        \label{fig:ratio-vartheta-1D}
    \end{subfigure}
    \caption{Ratios $r_Z(\delta)$ and $r_\vartheta(\delta)$ for the non-matching initial datum for $d = 1$.}
    \label{fig:diag-ratios}
\end{figure}

As the corresponding plots for the other examples and polynomial degrees showed the same qualitative behaviour, we only display Figure~\ref{fig:diag-ratios}. Figure~\ref{fig:ratio-Z-1D} shows that $\vartheta_Z(\delta)$, although not a reliable estimator for $\vartheta(\delta)$, remains of the same order as the certified estimator. This supports its use as an empirical, and potentially cheaper, diagnostic. Figure~\ref{fig:ratio-vartheta-1D} shows that, in these runs, $\vartheta(\delta)$ is mostly dominated by the slab reconstruction error $\vartheta_{\rm slab}(\delta)$, rather than by the flux-equilibration upper bound $\vartheta_{\rm flux}(\delta)$ for $\|\theta_\delta-\widecheck{\theta}^\delta\|_Y$. This indicates that the computed estimator is sharp in these experiments. The sharp drops in the first outer iterations are caused by $Y^\delta=\widecheck{Y}^\delta$, for which $\vartheta_{\rm slab}(\delta)=0$.

\section{Conclusion} \label{sec:conclusion}

We studied minimal residual space-time finite element discretizations of linear parabolic initial value problems.
To deal with the arising dual norm, we introduced the Riesz lift of the residual as a secondary variable.
Quasi-optimality of the primal variable of the mixed system follows from a uniform inf-sup condition.
Whereas this condition is known to be satisfied for finite element spaces w.r.t.~prismatic partitions of the space-time cylinder that allow for a decomposition into time-slabs, we proved
that this condition cannot be expected to hold for non-time-slab partitions.
In view of this obstruction, we derived a data-dependent a posteriori condition on the test space that guarantees quasi-optimality for the data at hand. Moreover, under this condition, an a posteriori estimator for the error in the principal variable is both efficient and reliable, with explicit constants.

In order to be able to verify the aforementioned a posteriori condition, we proposed a computable estimator for the error in the secondary variable.
The construction is based on the coarsest time-slab refinement of the partition and equilibrated flux reconstructions, and is efficient and reliable modulo higher order data oscillation. Together with the a posteriori estimator for the error in the principal variable, this leads to a double-adaptive loop: in the inner loop the test space is enlarged until the data-dependent stability condition is satisfied, and in the outer loop the trial space is enlarged using the principal error indicators. The numerical experiments for singular heat-equation benchmarks show that the method refines towards the expected singularities and improves the rates compared with uniform refinement.

As already announced in the introduction, the extension to parabolic problems with a non-linear spatial operator that is Lipschitz continuous and strongly monotone will be the topic of a forthcoming work that builds on the combination of \cite{19.98} and the current work. Besides this extension, two issues deserve particular attention. The first is the construction and analysis of an optimal preconditioner for the saddle-point systems arising in the locally refined space-time setting. The second is the development of a different estimator for the inner loop that does not require the construction of the auxiliary time-slab partition. Such an estimator would remove the main practical drawback of the certified test-space estimator used here.

\appendix
\section{The necessity of time-slabs for conditions \eqref{eq:infsup2} and \eqref{eq:infsup1}} \label{sec:necessity}
Let $\Xi=(0,1)$, and for some bounded Lipschitz domain $\Omega \subset \R^d$, let $(H,V)=(L_2(\Omega),H^1_0(\Omega))$.
For our goal, instead of with $\sqrt{(A_s\bigcdot)(\bigcdot)}$, here we may equip $Y=L_2(\Xi;H^1_0(\Omega))$ with its canonical norm $\|\nabla_x\bigcdot\|_{L_2(\Sigma)^d}$.

Let $X^\delta$ and $Y^\delta$ be \emph{finite element spaces} w.r.t.~a joint, shape-regular, isotropic prismatic partition of $\Sigma$.
For $d \in \{1,2\}$, we will demonstrate that when such partitions do not allow a subdivision into time slabs, in general, condition~\eqref{eq:infsup2}, and even condition~\eqref{eq:infsup1} \emph{cannot} be expected to be valid.
Our results extend to the situation where the partition that underlies $Y^\delta$ is a refinement of some fixed maximal depth of the partition that underlies $X^\delta$.
Our arguments will be based on the following alternative characterization of the inf-sup constant $\gamma_\delta^{\diff_t}$ defined in \eqref{eq:56}.
As shown in \cite[Prop.~2.5]{35.8565}, it holds that
\be \label{eq:59}
\sup_{\{w\in X^\delta\colon \diff_t w \neq 0\}} \inf_{v \in Y^\delta} \frac{\|R_Y^{-1}\diff_t w  -v\|_Y}{\|R_Y^{-1}\diff_t w\|_Y}=\sqrt{1-(\gamma_\delta^{\diff_t})^2},
\ee
with the Riesz map $R_Y$ defined as in Section~\ref{sec:notations}.
In the literature on DPG methods (e.g.~\cite{64.14}), $R_Y^{-1}\diff_t w$ is known as the optimal test function for the trial function $w \in X^\delta$  (and operator $\diff_t\colon X \rightarrow Y'$). 
Actually, we will use the following stronger statement.
\begin{lemma}
For \emph{each} $w\in X^\delta$ with $\diff_t  w \neq 0$, it holds that
\be \label{eq:92}
\inf_{v \in Y^\delta} \frac{\|R_Y^{-1}\diff_t  w  -v\|^2_Y}{\|R_Y^{-1}\diff_t  w\|^2_Y}=1-\frac{\|{E_Y^\delta}'\diff_t  w\|_{{Y^\delta}'}^2}{\|\diff_t  w\|_{Y'}^2}.
\ee
\end{lemma}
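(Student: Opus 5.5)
This identity is a Pythagoras argument for the $Y$-orthogonal projection onto $Y^\delta$. Fix $w\in X^\delta$ with $\diff_t w\neq 0$ and set $z:=R_Y^{-1}\diff_t w\in Y$, so that $z\neq 0$ and $\|z\|_Y=\|\diff_t w\|_{Y'}$, since $R_Y$ is an isometry.

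Let $Q^\delta\colon Y\to Y^\delta$ denote the $Y$-orthogonal projector onto the closed subspace $Y^\delta$. The infimum in \eqref{eq:92} is attained at $v=Q^\delta z$, and Pythagoras gives
\[
\inf_{v\in Y^\delta}\|z-v\|_Y^2=\|z-Q^\delta z\|_Y^2=\|z\|_Y^2-\|Q^\delta z\|_Y^2 .
\]

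It remains to identify $\|Q^\delta z\|_Y$ with $\|{E_Y^\delta}'\diff_t w\|_{{Y^\delta}'}$. For $v\in Y^\delta$ we have $(\diff_t w)(v)=(R_Y z)(v)=\langle v,z\rangle_Y=\langle v,Q^\delta z\rangle_Y$. Hence
\[
\|{E_Y^\delta}'\diff_t w\|_{{Y^\delta}'}=\sup_{0\neq v\in Y^\delta}\frac{\langle v,Q^\delta z\rangle_Y}{\|v\|_Y}=\|Q^\delta z\|_Y ,
\]
where the supremum is attained at $v=Q^\delta z$ whenever $Q^\delta z\neq 0$; if $Q^\delta z=0$, both sides vanish.

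Dividing the Pythagoras identity by $\|z\|_Y^2=\|\diff_t w\|_{Y'}^2$ then yields \eqref{eq:92}.

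There is no real obstacle. The only care needed is that $Y$ carries the norm with respect to which $R_Y$ is defined: in this appendix that is the canonical norm $\|\nabla_x\bigcdot\|_{L_2(\Sigma)^d}$, and the argument uses nothing beyond $R_Y$ being the Riesz isometry for whichever Hilbert norm is chosen. Taking the supremum over $w$ then recovers \eqref{eq:59}, because $t\mapsto\sqrt{1-t^2}$ is decreasing on $[0,1]$ and $\gamma_\delta^{\diff_t}$ is the infimum of the ratios $\|{E_Y^\delta}'\diff_t w\|_{{Y^\delta}'}/\|\diff_t w\|_{Y'}$.
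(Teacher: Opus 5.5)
Your proof is correct and follows essentially the same route as the paper, which also introduces the $Y$-orthogonal projector $K^\delta$ onto $Y^\delta$, applies Pythagoras, and identifies $\|K^\delta R_Y^{-1}\diff_t w\|_Y$ with $\|{E_Y^\delta}'\diff_t w\|_{{Y^\delta}'}$ via the Riesz representation. Your separate handling of the case $Q^\delta z=0$ and your closing remark on recovering \eqref{eq:59} are harmless extras.
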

\begin{proof}
Let  $K^\delta$ denote the $Y$-orthogonal projector onto $Y^\delta$. Then, we have
\begin{align*}
&\inf_{v \in Y^\delta} \frac{\|R_Y^{-1}\diff_t  w  -v\|^2_Y}{\|R_Y^{-1}\diff_t  w\|^2_Y}=
\frac{\|(\identity -K^\delta)R_Y^{-1}\diff_t  w \|^2_Y}{\|R_Y^{-1}\diff_t  w\|^2_Y}=
\frac{\|R_Y^{-1}\diff_t  w \|^2_Y-\|K^\delta R_Y^{-1}\diff_t  w\|^2_Y}{\|R_Y^{-1}\diff_t  w\|^2_Y}\\
&=1-\sup_{0 \neq v \in Y^\delta} \frac{\langle R_Y^{-1}\diff_t  w,v\rangle_Y^2}{\|v\|^2_Y \|\diff_t  w\|^2_{Y'}}=
1-\sup_{0 \neq v \in Y^\delta} \frac{(\diff_t  w)(v)^2}{\|v\|^2_Y \|\diff_t  w\|^2_{Y'}}=
1-\frac{\|{E_Y^\delta}'\diff_t  w\|^2_{{Y^\delta}'}}{\|\diff_t  w\|^2_{Y'}}.\,\,\Box\mbox{$\qedhere$}
\end{align*}
\end{proof}

\subsection{The case $d=1$}
In this case, $\Omega$ is an open interval. 
For some $p \in \N_0$, and $h_\delta, H_\delta >0$ with $p+2 \leq \frac{H_\delta}{h_\delta} \in \N$, let $\pria^\delta$ be a partition of $\Sigma$ into squares with edge lengths ranging from $h_\delta$ to $H_\delta$, the relevant part of which is illustrated in Figure~\ref{fig:1}.
\begin{figure}[h]
\centering
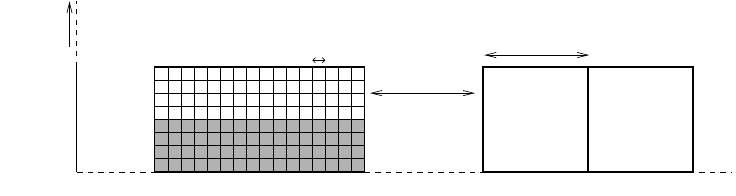
\caption{Partition of $\Xi \times \Omega$ into squares with $p+2\le \frac{H_\delta}{h_\delta}\in\N$.}
\label{fig:1}
\end{figure}
Let
$$
X^\delta := C(\overline{\Sigma}) \cap L_2(\Xi;H_0^1(\Omega)) \cap Q_{1,1}(\pria^\delta),
$$
and, for some $q \in \N$, let 
$$
Y^\delta:=L_2(\Xi;H_0^1(\Omega)) \cap Q_{p,q}(\pria^{\delta}).
$$

We will crucially exploit the following elementary result. For completeness, we provide the simple proof.

\begin{lemma}
There exists a non-zero sequence $(a_i)_{1 \leq i \leq p+2}$, with $\sum_{i=1}^{p+2} a_i=0$, such that the piecewise constant function $g^\delta$ defined by $g^\delta|_{((i-1) h_\delta,i h_\delta)}=a_i$ satisfies
$$
\int_0^{(p+2)h_\delta} g^\delta(s) k(s)\diff s=0 \quad (k \in P_p(0,(p+2)h_\delta)).
$$
\end{lemma}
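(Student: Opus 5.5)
This is a finite-dimensional linear algebra count: we need a nonzero vector $a=(a_1,\dots,a_{p+2})\in\R^{p+2}$ that satisfies $p+1$ homogeneous linear conditions. The condition $\sum_i a_i=0$ is exactly orthogonality against the constant $k\equiv 1\in P_p$, so it does not add to the count. Concretely, we fix the monomial basis $k_j(s)=s^j$, $0\le j\le p$, of $P_p(0,(p+2)h_\delta)$. By linearity in $k$, the orthogonality requirement is equivalent to
$$
\sum_{i=1}^{p+2} a_i\, m_{ij}=0\quad(0\le j\le p),\qquad m_{ij}:=\int_{(i-1)h_\delta}^{ih_\delta} s^j\diff s .
$$
This is a homogeneous system of $p+1$ equations in $p+2$ unknowns. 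Its solution space therefore has dimension at least one, and any nonzero solution gives the desired sequence. The equation with $j=0$ reads $h_\delta\sum_i a_i=0$, which yields $\sum_i a_i=0$.

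We can also rescale to make the sequence independent of $h_\delta$. The substitution $s=h_\delta\tau$ gives $m_{ij}=h_\delta^{j+1}\int_{i-1}^{i}\tau^j\diff\tau$, so the system is equivalent to one whose coefficients do not depend on $h_\delta$. Hence the $a_i$ can be chosen independently of $\delta$, which will presumably be convenient for the later uniform-in-$\delta$ argument.

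For completeness one can make the solution explicit. The matrix $(\int_{i-1}^i\tau^j\diff\tau)_{i,j}$ has rank exactly $p+1$; this follows from the unisolvence of cell averages over $p+1$ consecutive unit cells for $P_p$, i.e.\ a Vandermonde-type argument. The kernel is therefore one-dimensional. It is spanned by the $(p+1)$-st order finite-difference weights $a_i=(-1)^i\binom{p+1}{i-1}$. Indeed, $\sum_i a_i\int_{i-1}^i k$ is the $(p+1)$-st forward difference of the antiderivative of $k$, which is a polynomial of degree $\le p+1$ with constant leading term. This explicit form is optional, since the dimension count already suffices.

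No step here is a genuine obstacle. The only point that needs care is noting that the constraint $\sum_i a_i=0$ is implied by orthogonality to constants rather than being an extra equation.
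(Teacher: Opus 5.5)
Your dimension count is correct and is essentially the paper's argument: the paper also reduces to $h_\delta=1$ and uses that the $(p+2)$-dimensional space of piecewise constants on the unit cells must meet $P_p(0,p+2)^\perp$ nontrivially, because $p+2>p+1=\dim P_p$; it just phrases this through an oblique projector onto the functions that are constant on $(0,2)$, instead of your direct rank--nullity count. One slip in your optional explicit formula: the weights $a_i=(-1)^i\binom{p+1}{i-1}$ are right, but $\sum_i a_i\int_{i-1}^i k$ is, up to sign, the $(p+2)$-nd (not the $(p+1)$-st) forward difference at $0$ of the antiderivative $K$ of $k$ --- equivalently the $(p+1)$-st difference of the degree-$\le p$ polynomial $m\mapsto\int_m^{m+1}k$ --- and it vanishes because $\deg K\le p+1$, whereas the $(p+1)$-st difference of $K$ is in general a nonzero constant.
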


\begin{proof}
It suffices to construct $g^\delta$ for $h_\delta=1$.
Let $Z_p$ be the space of piecewise constants w.r.t.~the partition of $[0,p+2]$ into $p+2$ subintervals of length $1$, and let $V_p$ be its subspace of functions in $Z_p$ that are constant on $(0,2)$. If there exists a $0 \neq g^\delta \in V_p$ that is orthogonal to 
$P_p(0,p+2)$, then we are done.
If such a function does not exist, then from $\dim V_p=\dim P_{p}(0,p+2)$ it follows that there exists a projector $\Pi_p$ defined on $L_2(0,p+2)$ with $\ran \Pi_p=V_p$ and $\ran (\identity - \Pi_p)=P_p(0,p+2)^\perp$.
Now let $z|_{(0,1)}=1$, and $z|_{(1,p+2)}=0$. From $z \in Z_p \setminus V_p$, $g^\delta:=(\identity - \Pi_p) z \in Z_p$ is non-zero, and orthogonal to $P_p(0,p+2)$. 
The property $\sum_{i=1}^{p+2} a_i=0$ follows from $g^\delta \perp 1$.
\end{proof}

In the following, we extend $g^\delta$ with zero outside $(0,(p+2)h_\delta)$.
Thanks to $\sum_{i=1}^{p+2} a_i=0$, there exists a (unique) sequence $(b_i)_{0 \leq i \leq p+2}$ with $b_0=b_{p+2}=0$ and $b_i-b_{i-1}=a_i$ ($1 \leq i \leq p+2$).

With the  nodal `hat' function $\phi(x):=\left\{\begin{array}{@{}lr} x+1 & x \in [-1,0], \\ 1-x & x \in [0,1],\end{array}\right.$ and  $y^\delta$ being the continuous piecewise linear function defined by
$y^\delta(i h_\delta)= b_i$ ($0 \leq i \leq p+2$), and $y^\delta\equiv 0 $ on $((p+2)h_\delta, \mathrm{T})$, we define 
$$
w^\delta := y^\delta \otimes \phi\big(\tfrac{\bigcdot-\alpha}{H_\delta}\big) \in X^\delta.
$$
Notice that $\supp w^\delta$ is the shaded area in Figure~\ref{fig:1}. It holds that
$$
\diff_t w^\delta = h_\delta^{-1} g^\delta \otimes  \phi\big(\tfrac{\bigcdot-\alpha}{H_\delta}\big).
$$

In view of \eqref{eq:92}, we will study  $\inf_{v \in Y^\delta}  \frac{\|R_Y^{-1}\diff_t w^\delta  -v\|_Y}{\|R_Y^{-1}\diff_t w^\delta\|_Y}$.
Noticing that $R_Y^{-1}=\identity \otimes R_{H^1_0(\Omega)}^{-1}$,
from here onwards we may pretend that $\Xi=(0,H_\delta)$, and  thus $\Sigma=(0,H_\delta) \times \Omega$.

One calculates
$\|y^\delta\|_{L_2(\Xi)} \eqsim \sqrt{h_\delta} \eqsim \|g^\delta\|_{L_2(\Xi)}$ and
$|\phi\big(\tfrac{\bigcdot-\alpha}{H_\delta}\big)|_{H^1(\Omega)} \eqsim \frac{1}{\sqrt{H_\delta}}$.
With the Dirac functional $\delta_\alpha(\bigcdot):=\bigcdot(\alpha)$, we have  $\|H_\delta^{-1} \phi\big(\tfrac{\bigcdot-\alpha}{H_\delta}\big) -\delta_\alpha\|_{H^{-1}(\Omega)} \lesssim \sqrt{H_\delta}$ and, taking $\dist(\alpha,\partial\Omega) \gtrsim 1$,  $\|\boldsymbol{\delta}_{\alpha}\|_{H^{-1}(\Omega)} \eqsim 1$ , and thus $\|H_\delta^{-1}\phi\big(\tfrac{\bigcdot-\alpha}{H_\delta}\big)\|_{H^{-1}(\Omega)}\eqsim 1$.
We conclude that
\be \label{eq:91}
\|w^\delta\|_Y \eqsim \tfrac{\sqrt{h_\delta}}{\sqrt{H_\delta}},
\quad \|\diff_t w^\delta\|_{Y'} \eqsim \tfrac{H_\delta}{\sqrt{h_\delta}},
\ee
which will be used later.

 From $R_Y^{-1}$ being an isometry, for $\beta$ with $0<\beta-H_\delta-(\alpha+H_\delta) \lesssim H_\delta$  as in Figure~\ref{fig:1}, we have
\begin{align*}
\zeta_\delta&:=\frac{\|R_Y^{-1}\diff_t w^\delta-R_Y^{-1} h_\delta^{-1} g^\delta \otimes \phi\big(\tfrac{\bigcdot-\beta}{H_\delta}\big)\|_{Y}}{\|R_Y^{-1}\diff_t w^\delta\|_Y}=\frac{\|\diff_t w^\delta-h_\delta^{-1} g^\delta \otimes \phi\big(\tfrac{\bigcdot-\beta}{H_\delta}\big)\|_{Y'}}{\|\diff_t w^\delta\|_{Y'}}
\\  
& =\frac{\|H_\delta^{-1} \phi\big(\tfrac{\bigcdot-\alpha}{H_\delta}\big)-H_\delta^{-1}\phi\big(\tfrac{\bigcdot-\beta}{H_\delta}\big)\|_{H^{-1}(\Omega)}}{\|H_\delta^{-1}\phi\big(\tfrac{\bigcdot-\alpha}{H_\delta}\big)\|_{H^{-1}(\Omega)}} \lesssim \sqrt{H_\delta}
\end{align*}
by using that $|\beta-\alpha| \lesssim H_\delta$.

Next we show that $R_Y^{-1} h_\delta^{-1} g^\delta \otimes \phi\big(\tfrac{\bigcdot-\beta}{H_\delta}\big)$ is $Y$-orthogonal to $Y^\delta$.
Let $P_{(0,H_\delta)}^p$ denote the $L_2(0,H_\delta)$-orthogonal projector onto $P_p(0,H_\delta)$.
Then $P_{(0,H_\delta)}^p \otimes \identity\colon Y^\delta \rightarrow Y^\delta$, so that
$Y^\delta=\ran (P_{(0,H_\delta)}^p \otimes \identity)|_{Y^\delta} \oplus \ran (\identity-P_{(0,H_\delta)}^p \otimes \identity)|_{Y^\delta}$.
By construction of $g^\delta$, we have $\ran (P_{(0,H_\delta)}^p \otimes \identity)|_{Y^\delta} \perp_{Y}
h_\delta^{-1} g^\delta \otimes R_{H^1_0(\Omega)}^{-1} \phi\big(\tfrac{\bigcdot-\beta}{H_\delta}\big)$.
Any $v \in \ran (\identity-P_{(0,H_\delta)}^p \otimes \identity)|_{Y^\delta}$ vanishes on $(0,H_\delta) \times (\beta-H_\delta,\beta+H_\delta)$.
Consequently, for such $v$ it holds that
$$\langle h_\delta^{-1} g^\delta \otimes R_{H^1_0(\Omega)}^{-1} \phi\big(\tfrac{\bigcdot-\beta}{H_\delta}\big),v\rangle_Y=
\int_0^{H_\delta} \!\!\int_\Omega h_\delta^{-1} g^\delta(t) \phi\big(\tfrac{x-\beta}{H_\delta}\big) v(t,x)\diff t\diff x=0
$$
We conclude that indeed $Y^\delta \perp_Y R_Y^{-1} h_\delta^{-1} g^\delta \otimes \phi\big(\tfrac{\bigcdot-\beta}{H_\delta}\big)$.

From this orthogonality, the definition of $\zeta_\delta$ and an application of the triangle inequality show that
\begin{align*}
\inf_{v \in Y^\delta} \frac{\|R_Y^{-1}\diff_t w^\delta-v\|_Y}{\|R_Y^{-1}\diff_t w^\delta\|_Y}&\geq 
\inf_{v \in Y^\delta} \frac{\|R_Y^{-1} h_\delta^{-1} g^\delta \otimes \phi\big(\tfrac{\bigcdot-\beta}{H_\delta}\big)-v\|_Y}{\|R_Y^{-1}\diff_t w^\delta\|_Y} -\zeta_\delta
\\&=\frac{\|R_Y^{-1} h_\delta^{-1} g^\delta \otimes \phi\big(\tfrac{\bigcdot-\beta}{H_\delta}\big)\|_Y}{\|R_Y^{-1}\diff_t w^\delta\|_Y} -\zeta_\delta
\geq 1-2\zeta_\delta \rightarrow 1 \quad(H_\delta \rightarrow 0),
\end{align*}
or, equivalently (see \eqref{eq:92}), 
\be \label{eq:93}
\frac{\|{E_Y^\delta}'\diff_t  w^\delta\|_{{Y^\delta}'}}{\|\diff_t  w^\delta\|_{Y'}} \leq \sqrt{1-(1-2 \zeta_\delta)^2}
\rightarrow 0 \quad(H_\delta \rightarrow 0).
\ee

As a first  immediate consequence, we have
$$
\gamma_\delta^{\diff_t} \rightarrow 0 \quad (H_\delta \rightarrow 0),
$$
i.e., without the time-slab restriction, \eqref{eq:infsup2} is in general \emph{not} valid.
 
So far, we have only used that the squares of size $H_\delta$ and those of size $h_\delta$ are within spatial distance ${\mathcal O}(H_\delta)$, and that $\frac{h_\delta}{H_\delta} \leq \frac{1}{p+2}$. Under a stronger condition on $\frac{h_\delta}{H_\delta}$, as a second consequence of \eqref{eq:93} it turns out that also \eqref{eq:infsup1} is \emph{not} valid. Indeed, \eqref{eq:91} shows
$$
\frac{\|w^\delta\|_Y}{\|\diff_t  w^\delta\|_{Y'}} \rightarrow 0 \quad(\tfrac{h_\delta}{H_\delta\sqrt{H_\delta}} \rightarrow 0),
$$
and thus, because $w^\delta$ vanishes at $t=\mathrm{T}$,
$$
\frac{\|w^\delta\|_{X,\delta}^2}{\|w^\delta\|_X^2}
= \frac{\|w^\delta\|_Y^2 + \|{E_Y^\delta}' \diff_t w^\delta\|_{{Y^\delta}'}^2}{\|w^\delta\|_Y^2 + \|\diff_t w^\delta\|_{Y'}^2}
\rightarrow 0 \quad(H_\delta \rightarrow 0,\,\tfrac{h_\delta}{H_\delta\sqrt{H_\delta}} 
\rightarrow 0),
$$
or, by two applications of Theorem~\ref{thm:3},
$$
\gamma_\delta^{B_e} \rightarrow 0 \quad(H_\delta \rightarrow 0,\,\tfrac{h_\delta}{H_\delta\sqrt{H_\delta}} \rightarrow 0).
$$

\subsection{The case $d\ge2$}
Next, we discuss the extension of the above analysis to $d \geq 2$.
For $\Omega$ being a $d$-dimensional polygon, let $\cP^\delta$ be a partition of $\Sigma=(0,1) \times \Omega$ into (shape-regular, isotropic) prisms.
Let $X^\delta \subset X$ and $Y^\delta \subset Y$ be defined as before.
To substitute for $\phi\big(\tfrac{\bigcdot-\alpha}{H_\delta}\big)$ and $\phi\big(\tfrac{\bigcdot-\beta}{H_\delta}\big)$ from the $d=1$ case,
let $\phi_\alpha$ and $\phi_\beta$ be piecewise linear `hat' functions with $\int_\Omega \phi_\alpha\diff x = \int_\Omega \phi_\beta\diff x\eqsim H_\delta^d$,
$\diam \supp \phi_\alpha \eqsim H_\delta \eqsim \diam \supp \phi_\beta$,
$\dist(\supp \phi_\alpha, \supp \phi_\beta) \lesssim H_\delta$,
and such that $\cP^\delta|_{\supp \chi_{(0,H_\delta)} \otimes \phi_\beta}$ is the coarsest partition w.r.t.~which $\chi_{(0,H_\delta)} \otimes  \phi_\beta$ is piecewise polynomial, whereas $\cP^\delta|_{\supp \chi_{(0,H_\delta)} \otimes \phi_\alpha}$ is a further refinement of such a partition with a locally uniform mesh-size $h_\delta \leq \frac{H_\delta}{p+2}$, cf.~Figure~\ref{fig:1}.

Similar to the case $d=1$, we take
$$
w^\delta := y^\delta \otimes \phi_\alpha,
$$
so that $\diff_t w^\delta = h_\delta^{-1} g^\delta \otimes \phi_\alpha$. We have seen that  $\|y^\delta\|_{L_2(\Xi)} \eqsim \sqrt{h_\delta} \eqsim \|g^\delta\|_{L_2(\Xi)}$, and one calculates
$|\phi_\alpha|_{H^1(\Omega)} \eqsim H_\delta^{\frac{d}{2}-1}$, so that
\be \label{eq:65}
\|w^\delta\|_Y \eqsim \sqrt{h_\delta}\, H_\delta^{d/2-1},\quad \|\diff_t w^\delta\|_{Y'} \eqsim \tfrac{1}{\sqrt{h_\delta}}\|\phi_\alpha\|_{H^{-1}(\Omega)}
\ee
with (a lower bound for) $\|\phi_\alpha\|_{H^{-1}(\Omega)}$ to be determined.

With $\omega$ a ball of diameter $\lesssim H_\delta$ that contains $\supp \phi_\alpha \cup \supp \phi_\beta$, an application of the Poincar\'{e} estimate shows that for $v \in H^1_0(\Omega)$,
\begin{align*}
\Big|\int_\Omega (\phi_\alpha- \phi_\beta) v \diff x\Big|&=\Big|\int_\Omega \phi_\alpha\Big(v-\fint_\omega v\Big)-\phi_\beta\Big(v-\fint_\omega v\Big)\diff x\Big|
\\
&\lesssim (\|\phi_\alpha\|_{L_2(\Omega)} +\|\phi_\beta\|_{L_2(\Omega)}) H_\delta |v|_{H^1(\Omega)} \eqsim H_\delta^{d/2+1}  |v|_{H^1(\Omega)},
\end{align*}
i.e.,
$$
\| \phi_\alpha- \phi_\beta\|_{H^{-1}(\Omega)} \lesssim H_\delta^{1+d/2}.
$$

Similar to the case $d=1$, we set
\be \label{eq:58}
\zeta_\delta:=\frac{\|R_Y^{-1}\diff_t w^\delta-R_Y^{-1} h_\delta^{-1} g^\delta \otimes \phi_\beta\|_{Y}}{\|R_Y^{-1}\diff_t w^\delta\|_Y} =\frac{\|\phi_\alpha-\phi_\beta\|_{H^{-1}(\Omega)}}{\|\phi_\alpha\|_{H^{-1}(\Omega)}} \lesssim \frac{H_\delta^{1+d/2}}{\|\phi_\alpha\|_{H^{-1}(\Omega)}}.
\ee

As in the $d=1$ case, we have $Y^\delta \perp_Y R_Y^{-1} h_\delta^{-1} g^\delta \otimes \phi_\beta$, so that
$$
\frac{\|{E_Y^\delta}'\diff_t  w^\delta\|_{{Y^\delta}'}}{\|\diff_t  w^\delta\|_{Y'}} \leq \sqrt{1-(1-2 \zeta_\delta)^2}.
$$

In order to show that $\zeta_\delta \rightarrow 0$ for $H_\delta \rightarrow 0$,  it only remains to find a suitable lower bound for $\|\phi_\alpha\|_{H^{-1}(\Omega)}$. 
W.l.o.g.~ we assume that $\supp \phi_\alpha$ is contained in \mbox{$\{x \in \R^d\colon |x| \leq H_\delta\}$} and that $\{x \in \R^d\colon |x| \leq 1\} \subset \Omega$. For $d=2$, we take
$$
v(x)=v(r)=\left\{\begin{array}{cc} 1 & r \in (0,H_\delta),\\ \frac{\log r}{\log H_\delta}, &r \in (H_\delta,1),\\ 0 & r>1.\end{array}\right.
$$
Then, it holds that
\begin{align*}
\|\nabla v\|_{L_2(\Omega)^d}^2 &=\int_{r \in (H_\delta,1)} \nabla v \cdot \nabla v \diff x=\int_{r \in (H_\delta,1)} -v \Delta v \diff x+\oint_{r=H_\delta} \frac{\partial v}{\partial n} v \diff s\\
&=\oint_{r=H_\delta} \frac{\partial v}{\partial n} \diff s=
\int_0^{2 \pi} \frac{-1}{H_\delta \log H_\delta} H_\delta \diff \theta=\frac{2 \pi}{|\log H_\delta|},
\end{align*}
so that $\|\phi_\alpha\|_{H^{-1}(\Omega)} \geq \frac{\int_\Omega \phi_\alpha \diff x}{\|\nabla v\|_{L_2(\Omega)^d}}\eqsim
H_\delta^2 \sqrt{\frac{|\log H_\delta|}{2 \pi}}$,
and, using \eqref{eq:58}, indeed $\zeta_\delta \lesssim \sqrt{\frac{1}{|\log H_\delta}|} \rightarrow 0$ for $H_\delta \rightarrow 0$.

We conclude that 
$$
\gamma^{\diff_t}_\delta \rightarrow 0 \quad (H_\delta \rightarrow 0).
$$
Moreover, from \eqref{eq:65} we have $\frac{\|w^\delta\|_Y}{\|\diff_t w^\delta\|_{Y'}} \lesssim \frac{h_\delta}{H_\delta^2 \sqrt{|\log H_\delta|}}$, so that
$$
\gamma^{B_e}_\delta\rightarrow 0 \quad (H_\delta \rightarrow 0,\,\tfrac{h_\delta}{H_\delta^2 \sqrt{|\log H_\delta|}} \rightarrow 0).
$$

The analogous approach for $d \geq 3$ is to take 
$$
v(x)=v(r)=\left\{\begin{array}{cc} 1 & r \in (0,H_\delta),\\ \frac{r^{2-d}-1}{H_\delta^{2-d}-1}, &r \in (H_\delta,1),\\ 0 & r>1,\end{array}\right.
$$
which gives $\|\nabla v\|_{L_2(\Omega)^d}^2 \eqsim H_\delta^{d-2}$, so that $\|\phi_\alpha\|_{H^{-1}(\Omega)} \geq \frac{\int_\Omega \phi_\alpha \diff x}{\|\nabla v\|_{L_2(\Omega)^d}}\eqsim H_\delta^{1+d/2}$, which together with \eqref{eq:58} unfortunately only gives $\zeta_\delta \lesssim 1$ for $H_\delta \rightarrow 0$.

\end{document}